\let\@wraptoccontribs\wraptoccontribs
\def\@setauthors{%
  \begingroup
  \def\thanks{\protect\thanks@warning}%
  \trivlist
  \centering\footnotesize \@topsep30\p@\relax
  \advance\@topsep by -\baselineskip
  \item\relax
  \author@andify\authors
  \def\\{\protect\linebreak}%
  \MakeUppercase{\authors}%
  \ifx\@empty\contribs
  \else
    \linebreak \@setcontribs
    \@closetoccontribs
  \fi
  \endtrivlist
  \endgroup
}
\tikzset{
	every loop/.style={very thick},
	comp/.style={circle,fill,black,,inner sep=0pt,minimum size=5pt},
	order bottom left/.style={pos=.05,left,font=\tiny},
	order top left/.style={pos=.9,left,font=\tiny},
	order bottom right/.style={pos=.05,right,font=\tiny},
	order top right/.style={pos=.9,right,font=\tiny},
	order node dis/.style={text width=.75cm},
	circled number/.style={circle, draw, inner sep=0pt, minimum size=12pt},
	below left with distance/.style={below left,text height=10pt},
    below right with distance/.style={below right,text height=10pt}
	}
    \newcommand*{\@gobblenexttocentry}[9]{}
    \newcommand*{\@gobblenexttocentry}[4]{}
\newcommand*{\addsubsection}{%
    \addtocontents{toc}{\protect\@gobblenexttocentry}%
    \subsection*}
\DeclareRobustCommand{\SkipTocEntry}[9]{}
\begin{document}

\def\subsectionautorefname{Section}
\def\subsubsectionautorefname{Section}
\def\sectionautorefname{Section}
\def\equationautorefname~#1\null{(#1)\null}

\newcommand{\mynewtheorem}[4]{
  % #1=env name
  % #2=displayed name
  % #3=counter to share
  % #4=counter to obey
  \if\relax\detokenize{#3}\relax %#3 empty
    \if\relax\detokenize{#4}\relax %#4 empty
      \newtheorem{#1}{#2}
    \else
      \newtheorem{#1}{#2}[#4]
    \fi
  \else
    \newaliascnt{#1}{#3}
    \newtheorem{#1}[#1]{#2}
    \aliascntresetthe{#1}
  \fi
  \expandafter\def\csname #1autorefname\endcsname{#2}
}

%%%%%%%%%%%%%%%%%%%%%%%%%%%%
%%%% Benutzerteil: %%%%%%%%%%%%%%%%%
%%%%%%%%%%%%%%%%%%%%%%%%%%%%
\mynewtheorem{theorem}{Theorem}{}{section}
\mynewtheorem{lemma}{Lemma}{theorem}{}
\mynewtheorem{rem}{Remark}{lemma}{}
\mynewtheorem{prop}{Proposition}{lemma}{}
\mynewtheorem{cor}{Corollary}{lemma}{}
\mynewtheorem{definition}{Definition}{lemma}{}
\mynewtheorem{question}{Question}{lemma}{}
\mynewtheorem{assumption}{Assumption}{lemma}{}
\mynewtheorem{example}{Example}{lemma}{}

%%%%%%%%%%%%%%%%%%%%%%%%%%%%
%%%%%%%%%%%%%%%%%%%%%%%%%%%%

\def\defbb#1{\expandafter\def\csname b#1\endcsname{\mathbb{#1}}}
\def\defcal#1{\expandafter\def\csname c#1\endcsname{\mathcal{#1}}}
\def\deffrak#1{\expandafter\def\csname frak#1\endcsname{\mathfrak{#1}}}
\def\defop#1{\expandafter\def\csname#1\endcsname{\operatorname{#1}}}
\def\defbf#1{\expandafter\def\csname b#1\endcsname{\mathbf{#1}}}

\makeatletter
\def\defcals#1{\@defcals#1\@nil}
\def\@defcals#1{\ifx#1\@nil\else\defcal{#1}\expandafter\@defcals\fi}
\def\deffraks#1{\@deffraks#1\@nil}
\def\@deffraks#1{\ifx#1\@nil\else\deffrak{#1}\expandafter\@deffraks\fi}
\def\defbbs#1{\@defbbs#1\@nil}
\def\@defbbs#1{\ifx#1\@nil\else\defbb{#1}\expandafter\@defbbs\fi}
\def\defbfs#1{\@defbfs#1\@nil}
\def\@defbfs#1{\ifx#1\@nil\else\defbf{#1}\expandafter\@defbfs\fi}
\def\defops#1{\@defops#1,\@nil}
\def\@defops#1,#2\@nil{\if\relax#1\relax\else\defop{#1}\fi\if\relax#2\relax\else\expandafter\@defops#2\@nil\fi}
\makeatother

%%%%%%%%%%%%%%%%%%%%%%%%%%%%
%%%% Benutzerteil: %%%%%%%%%%%%%%%%%
%%%%%%%%%%%%%%%%%%%%%%%%%%%%
\defbbs{ZHQCNPALRVW}
\defcals{DOPQMNXYLTRAEHZKCFI}
\deffraks{apijklmnopqueR}
\defops{Exp, PGL,SL,SO, Sp,mod,Spec,Re,Gal,Tr,End,GL,Hom,PSL,H,div,Aut,rk,Mod,R,T,Tr,Mat,Vol,MV,Res,vol,Z,diag,Hyp,ord,Im,ev,U,dev,c,CH,fin,pr,Pic,lcm,ch,td,LG,id,Sym,Aut}
\defbfs{abkiuvzwp} % only use this for small letters, otherwise you go directly to hell!!!
%%%%%%%%%%%%%%%%%%%%%%%%%%%%
%%%%%%%%%%%%%%%%%%%%%%%%%%%%

\def\ep{\varepsilon}
\def\abs#1{\lvert#1\rvert}
\def\dd{\mathrm{d}}
\def\inj{\hookrightarrow}
\def\eq{=}
\newcommand{\hyp}{{\rm hyp}}
\newcommand{\odd}{{\rm odd}}
\newcommand{\hor}{{\rm hor}}

\def\i{\mathrm{i}}
\def\e{\mathrm{e}}
\def\st{\mathrm{st}}
\def\ct{\mathrm{ct}}

\def\uC{\underline{\bC}}
\def\ol{\overline}
  
\def\Vrel{\bV^{\mathrm{rel}}}
\def\Wrel{\bW^{\mathrm{rel}}}
\def\twolev{\mathrm{LG_1(B)}}

%%%%%%%%%%%%%% equations
\def\be{\begin{equation}}   \def\ee{\end{equation}}     \def\bes{\begin{equation*}}    \def\ees{\end{equation*}}
\def\ba{\be\begin{aligned}} \def\ea{\end{aligned}\ee}   \def\bas{\bes\begin{aligned}}  \def\eas{\end{aligned}\ees}
\def\={\;=\;}  \def\+{\,+\,} \def\m{\,-\,}

%%%%%%%%%%%%% moduli spaces
\newcommand*{\proj}{\mathbb{P}}
\newcommand{\barmoduli}[1][g]{{\overline{\mathcal M}}_{#1}}
\newcommand{\moduli}[1][g]{{\mathcal M}_{#1}}
\newcommand{\omoduli}[1][g]{{\Omega\mathcal M}_{#1}}
\newcommand{\modulin}[1][g,n]{{\mathcal M}_{#1}}
\newcommand{\omodulin}[1][g,n]{{\Omega\mathcal M}_{#1}}
\newcommand{\zomoduli}[1][]{{\mathcal H}_{#1}}
\newcommand{\barzomoduli}[1][]{{\overline{\mathcal H}_{#1}}}
\newcommand{\pomoduli}[1][g]{{\proj\Omega\mathcal M}_{#1}}
\newcommand{\pomodulin}[1][g,n]{{\proj\Omega\mathcal M}_{#1}}
\newcommand{\pobarmoduli}[1][g]{{\proj\Omega\overline{\mathcal M}}_{#1}}
\newcommand{\pobarmodulin}[1][g,n]{{\proj\Omega\overline{\mathcal M}}_{#1}}
\newcommand{\potmoduli}[1][g]{\proj\Omega\tilde{\mathcal{M}}_{#1}}
\newcommand{\obarmoduli}[1][g]{{\Omega\overline{\mathcal M}}_{#1}}
\newcommand{\obarmodulio}[1][g]{{\Omega\overline{\mathcal M}}_{#1}^{0}}
\newcommand{\otmoduli}[1][g]{\Omega\tilde{\mathcal{M}}_{#1}}
\newcommand{\pom}[1][g]{\proj\Omega{\mathcal M}_{#1}}
\newcommand{\pobarm}[1][g]{\proj\Omega\overline{\mathcal M}_{#1}}
\newcommand{\pobarmn}[1][g,n]{\proj\Omega\overline{\mathcal M}_{#1}}
\newcommand{\princbound}{\partial\mathcal{H}}
\newcommand{\omoduliinc}[2][g,n]{{\Omega\mathcal M}_{#1}^{{\rm inc}}(#2)}
\newcommand{\obarmoduliinc}[2][g,n]{{\Omega\overline{\mathcal M}}_{#1}^{{\rm inc}}(#2)}
\newcommand{\pobarmoduliinc}[2][g,n]{{\proj\Omega\overline{\mathcal M}}_{#1}^{{\rm inc}}(#2)}
\newcommand{\otildemoduliinc}[2][g,n]{{\Omega\widetilde{\mathcal M}}_{#1}^{{\rm inc}}(#2)}
\newcommand{\potildemoduliinc}[2][g,n]{{\proj\Omega\widetilde{\mathcal M}}_{#1}^{{\rm inc}}(#2)}
\newcommand{\omoduliincp}[2][g,\lbrace n \rbrace]{{\Omega\mathcal M}_{#1}^{{\rm inc}}(#2)}
\newcommand{\obarmoduliincp}[2][g,\lbrace n \rbrace]{{\Omega\overline{\mathcal M}}_{#1}^{{\rm inc}}(#2)}
\newcommand{\obarmodulin}[1][g,n]{{\Omega\overline{\mathcal M}}_{#1}}
\newcommand{\LTH}[1][g,n]{{K \overline{\mathcal M}}_{#1}}
\newcommand{\PLS}[1][g,n]{{\bP\Xi \mathcal M}_{#1}}

\DeclareDocumentCommand{\LMS}{ O{\mu} O{g,n} O{}}{\Xi\overline{\mathcal{M}}^{#3}_{#2}(#1)}
\DeclareDocumentCommand{\Romod}{ O{\mu} O{g,n} O{}}{\Omega\mathcal{M}^{#3}_{#2}(#1)}

\newcommand*{\Tw}[1][\Lambda]{\mathrm{Tw}_{#1}}  %twist group
\newcommand*{\sTw}[1][\Lambda]{\mathrm{Tw}_{#1}^s}  %simple twist group

% Boldface letters
\newcommand{\bfa}{{\bf a}}
\newcommand{\bfb}{{\bf b}}
\newcommand{\bfd}{{\bf d}}
\newcommand{\bfe}{{\bf e}}
\newcommand{\bff}{{\bf f}}
\newcommand{\bfg}{{\bf g}}
\newcommand{\bfh}{{\bf h}}
\newcommand{\bfm}{{\bf m}}
\newcommand{\bfn}{{\bf n}}
\newcommand{\bfp}{{\bf p}}
\newcommand{\bfq}{{\bf q}}
\newcommand{\bfP}{{\bf P}}
\newcommand{\bfR}{{\bf R}}
\newcommand{\bfU}{{\bf U}}
\newcommand{\bfu}{{\bf u}}
\newcommand{\bfv}{{\bf v}}
\newcommand{\bfz}{{\bf z}}

\newcommand{\bfl}{{\boldsymbol{\ell}}}
\newcommand{\bfmu}{{\boldsymbol{\mu}}}
\newcommand{\bfeta}{{\boldsymbol{\eta}}}
\newcommand{\bfomega}{{\boldsymbol{\omega}}}

\newcommand{\wh}{\widehat}
\newcommand{\wt}{\widetilde}

\newcommand{\ps}{\mathrm{ps}}  

\newcommand{\tdpm}[1][{\Gamma}]{\mathfrak{W}_{\operatorname{pm}}(#1)}
\newcommand{\tdps}[1][{\Gamma}]{\mathfrak{W}_{\operatorname{ps}}(#1)}

\newlength{\halfbls}\setlength{\halfbls}{.5\baselineskip}
\newlength{\halbls}\setlength{\halfbls}{.5\baselineskip}

\newcommand*{\Hrel}{\cH_{\text{rel}}^1}
\newcommand*{\Hrelbar}{\overline{\cH}^1_{\text{rel}}}

\newcommand*\interior[1]{\mathring{#1}}

\newcommand{\prodt}[1][\lceil j \rceil]{t_{#1}}
\newcommand{\prodtL}[1][\lceil L \rceil]{t_{#1}}

\defbbs{ZHQCNPALRVW}
\defcals{ABOPQMNXYLTRAEHZKCFI}
\deffraks{apijklgmnopqueRC}
\defops{IVC, PGL,SL,mod,Spec,Re,Gal,Tr,End,GL,Hom,PSL,H,div,Aut,rk,Mod,R,T,Tr,Mat,Vol,MV,Res,Hur, vol,Z,diag,Hyp,hyp,hl,ord,Im,ev,U,dev,c,CH,fin,pr,Pic,lcm,ch,td,LG,id,Sym,Aut,Log,tw,irr,discrep,BN,age,hor,lev,Per}
\defbfs{uvzwp} % only use this for small letters, otherwise you go directly to hell!!!

\newcommand{\Teichmuller}{Teich\-m\"uller\xspace}

%%%%%%%%%%%%%%%%%%%%%%%%%
%%%%%%%%%%%%%%%%%%%%%%%%%

\title[Teichm\"uller curves in meromorphic strata]
      {Teichm\"uller curves in hyperelliptic components of meromorphic strata}

\author{Martin M\"oller}
\address{M.\ M\"oller: Institut f\"ur Mathematik, Goethe-Universit\"at Frankfurt,
Robert-Mayer-Str. 6-8,
60325 Frankfurt am Main, Germany}
\email{moeller@math.uni-frankfurt.de}
\thanks{Research of M.M is supported by the DFG-project MO 1884/2-1
  and the Collaborative Research Centre
TRR 326 ``Geometry and Arithmetic of Uniformized Structures''.
}

\author{Scott Mullane}
\address{\noindent S. Mullane: Humboldt-Universit\"at zu Berlin, Institut f\"ur Mathematik,  Unter den Linden 6, 10099 Berlin, Germany} \email{{scott.mullane@hu-berlin.de}}
\thanks{Research of S.M is supported by the Alexander von Humboldt Foundation, and the ERC Advanced Grant ``SYZYGY''}

\contrib[with an appendix by]{Benjamin Bakker}
\address{B. Bakker: Dept. of Mathematics, Statistics, and Computer Science, University of Illinois at Chicago, Chicago, USA.}
\email{bakker.uic@gmail.com}

\contrib[]{Scott Mullane}

\begin{abstract}
We provide a complete classification of \Teichmuller curves occurring in hyperelliptic components of the meromorphic strata of differentials. 
%Flat geometric methods are used to control the monodromy in local real linear equations 
Using a non-existence criterion based on how \Teichmuller curves intersect the boundary of the moduli space we derive a contradiction to the algebraicity of any candidate outside of Hurwitz covers of strata with projective dimension one, and Hurwitz covers of zero residue loci in strata with projective dimension two. 
\end{abstract}
\maketitle
\tableofcontents

%%%%%%%%%%%%%%%%%%%%%%%%%%%%%%%%%%%%%%%%%%%%%%%%%%%%%%%%%%%%
%%%%%%%%%%%%%%%%%%%%%%%%%%%%%%%%%%%%%%%%%
\section{Introduction} \label{sec:intro}
%%%%%%%%%%%%%%%%%%%%%%%%%%%%%%%%%%%%%%%%%

\Teichmuller curves are usually defined as immersed curves $C \to \moduli[g]$
in the moduli spaces of curves that are totally geodesic for the
\Teichmuller metric. They are generated by an abelian or quadratic
differential on any of the Riemann surfaces~$X$ parameterized by the curve~$C$.
Passing to a double cover of~$X$ we may (and we will) restrict to the case
of curves generated by an abelian differential~$\omega$. A \Teichmuller curve
thus defines the \emph{type} $\mu =(m_1,\ldots,m_n)$ of the abelian
differential~$\omega$,
a tuple of integers with sum equal to $2g-2$, the order of zeros of~$\omega$.
The classical case, originating
from a discovery of Veech \cite{veech89} thus deals with a differential of
\emph{holomorphic type} where all $m_i \geq 0$ and has beautiful connections
to billiards. There are several infinite series of \Teichmuller curves
(\cite{ward,mcmullenbild,calta,mcmullenprym,bouwmoel,MMWGothic}),
a complete classification in low genus (\cite{mcmullentor})
and finiteness results (\cite{MatWri, BHM16}) thanks to input to this geometric
problem from Hodge theory and number theory. See also \cite{McMTsurv} for
the most recent survey and a lot of open questions on \Teichmuller curves.
\par
In this paper we shed some light into the case of \emph{meromorphic
differentials}, i.e.\ the case where at least one of the $m_i$ is negative.
There are several equivalent definitions of \Teichmuller curves that
we briefly recall in Section~\ref{sec:charteich}. Relevant for us is the
following
characterization, that we take as the definition in the meromorphic case:
A \emph{\Teichmuller curve} is an immersed algebraic curve  $C \to
\moduli[g]$ which is the the image under the forgetful map of a
$2$-dimensional variety~$M \to \omoduli[g](\mu)$ in the moduli space
of flat surfaces of type~$\mu$, which is locally cut out by $\bR$-linear
equations in the period coordinates. The Appendix
provides an example why algebraicity is a non-trivial additional condition
in the meromorphic case.
\par
Removing the dimension hypothesis in this definition we arrive at the
notion of \emph{linear manifold} (also known as \emph{affine invariant
submanifold}). In the holomorphic case these are the closures of
$\GL_2(\bR)$-orbits by the fundamental results of Eskin-Mirzakhani
and Mohammadi (\cite{EsMi, EsMiMo}). The classification of linear manifolds
has recently attracted a lot of attention,
both by exhibiting exceptional examples (\cite{MMWGothic, emmw}) and by
deriving constraints to the existence (e.g.\ \cite{MWfull,AWmarked}).
These constraints are often derived by
degeneration arguments, either to the boundary of Mirzakhani-Wright
(\cite{MWboundary}) or, retaining even more information, to the multi-scale
compactification (\cite{BCGGM3}). Recent work of Benirschke-Dozier-Grushevsky
\cite{BDG} states that the boundary intersection of linear manifolds is (roughly)
a product of linear manifolds, now also in meromorphic strata.
Exploring the possibilities for such boundary intersections is a main
motivation for our classification attempts.
\par
Throughout this paper we restrict our attention to the hyperelliptic strata.
(We recall Boissy's classification of connected components of the meromorphic
strata in Section~\ref{sec:components}.) Just as in McMullen's genus two
classification \cite{mcmullenspin}, the first classification result in the
holomorphic case, we consider hyperelliptic strata to simply reduce
the combinatorial complexity.
\par
Certain obvious sources of \Teichmuller curves exist in
meromorphic strata. They arise as Hurwitz spaces of covers of strata
whose projectivized dimension is one, or whose projectivized dimension
is one after imposing conditions on the residues. We compile in
Proposition~\ref{prop:obviousTeich} the rather short list of
those \emph{obvious \Teichmuller curves} that lie in hyperelliptic strata.
This list is analagous to square-tiled surfaces in the
holomorphic case: They form an infinite series,  the degree of the
cover being one obvious invariant, and the precise classification of
irreducible components is probably a tedious task. Our main result is:
\par
\begin{theorem} \label{thm:main}
The only \Teichmuller curves in a hyperelliptic stratum of
meromorphic differentials are obvious \Teichmuller curves.
\end{theorem}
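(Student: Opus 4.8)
The plan is to prove a non-existence result by contradiction: assume a non-obvious \Teichmuller curve $C$ exists in a hyperelliptic meromorphic stratum, and derive a contradiction from how $C$ must meet the boundary of the moduli space. The overall strategy mirrors the degeneration arguments referenced in the introduction (Mirzakhani--Wright boundary theory and, crucially, the multi-scale compactification of \cite{BCGGM3}). First I would set up the boundary intersection machinery: a \Teichmuller curve, being a $2$-dimensional $\bR$-linear manifold, has a projective dimension one image, so its closure $\overline{C}$ in the multi-scale compactification is a compact curve meeting the boundary in finitely many points. At each such boundary point the curve degenerates to a multi-scale differential whose underlying stable graph records a combinatorial type; by the structure theorem of Benirschke--Dozier--Grushevsky \cite{BDG}, the boundary strata that $\overline{C}$ can touch are (roughly) products of linear manifolds on the strata associated to the levels and vertices of that graph.

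The heart of the argument is a \emph{non-existence criterion} phrased in terms of these boundary intersections. Since $C$ is algebraic (and here I would invoke that algebraicity is a genuine hypothesis, as the Appendix shows), $\overline{C}$ is a complete curve, so the sum of its intersection numbers with the boundary divisors, weighted appropriately, must satisfy a rigid numerical balancing condition. Concretely, I would compute the degree of the tautological (or Hodge) line bundle on $\overline{C}$ in two ways: globally via the curve being complete, and locally via the contributions of each boundary point, which are governed by the prong-matching and level-rotation data of the adjacent multi-scale differentials. The hyperelliptic hypothesis is what makes this tractable: it forces the stable graphs and their involutions into a short, explicitly enumerable list, so that for each combinatorial type one can read off exactly which degenerations are permitted.

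With the criterion in hand, the proof becomes a finite case analysis organized by the projectivized dimension of the stratum. In projective dimension one, the whole stratum is already a \Teichmuller curve, and any cover of it is an obvious \Teichmuller curve, so nothing new arises; in projective dimension two with the zero-residue condition imposed we likewise land in the obvious list. For every remaining hyperelliptic component of larger dimension, I would exhibit a boundary degeneration---a specific stable graph compatible with the hyperelliptic involution---whose existence is forced by the dimension of $C$ but whose numerical balancing contribution violates the criterion above, yielding the desired contradiction. The main obstacle I anticipate is precisely this last step: ensuring that in \emph{every} component outside the obvious list one can always produce a \emph{single} incompatible degeneration, since a priori a \Teichmuller curve might avoid a given boundary divisor and one must guarantee that at least one forbidden degeneration is unavoidable. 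Controlling this---likely via a positivity or intersection-theoretic argument showing $\overline{C}$ cannot be disjoint from all boundary strata of the problematic type, combined with the rigidity imposed by the hyperelliptic involution on the admissible prong-matchings---is where the real work lies.
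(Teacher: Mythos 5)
There is a genuine gap, and it sits exactly where you yourself flag it. Your proposed engine---computing the degree of the tautological line bundle on $\overline{C}$ in two ways and extracting a ``numerical balancing condition'' from prong-matching and level-rotation contributions---is never made precise, and no such computation appears in (or is needed by) the paper. The structure theorem of \cite{BDG} already allows two kinds of boundary points for a \Teichmuller curve: level graphs with one level and only horizontal nodes, or two levels and no horizontal nodes (Proposition~\ref{prop:BDGcrit}). A priori a non-obvious curve could degenerate exclusively in these allowed ways, and degree bookkeeping over boundary points gives no mechanism to exclude that; as you concede, nothing forces $\overline{C}$ to meet any particular ``problematic'' boundary stratum, and you supply no positivity statement in the multi-scale compactification that would do so. Without that step your case analysis by stable graphs cannot start, so the proposal is a frame around a hole rather than a proof.

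The paper closes this hole by a completely different, constructive mechanism inside the stratum. Any candidate surface is put into (a cylinder-admitting variant of) Boissy's half-plane presentation, and one runs explicit real-analytic paths that stay inside the linear manifold $M$ precisely because $M$ is cut out by $\bR$-linear equations and is closed---this, rather than completeness of $\overline{C}$, is where algebraicity enters. These paths (complex conjugation of all periods, Lemma~\ref{lem:conjugation}; ``coordinate dancing'' as in \eqref{eq:dancingeq}; ``pulling through cylinders'' as in \eqref{eq:pullthrough}) deliberately leave the $\GL_2(\bR)$-orbit and manufacture a surface in $M$ carrying a vertical cylinder together with a non-vertical saddle connection outside all vertical cylinders. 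Shrinking the vertical direction while fixing that saddle connection's period then produces a boundary point whose level graph has a horizontal node on lower level, directly contradicting Proposition~\ref{prop:BDGcrit}; this is the actual non-existence criterion, Proposition~\ref{prop:nonTeichCrit}. So the ``unavoidable forbidden degeneration'' you hoped to force by intersection theory is instead built by hand along a path in $M$. Note also that your final step is too quick: showing the surviving candidates are obvious is not a matter of observing that projectively one-dimensional strata are \Teichmuller curves, but of deriving rigid period relations from the dancing and pulling-through arguments (rationality of $\Per(v_{2j+1}+v_{2j+2})$ in the simple-pole case, and the normalizations $k_j=\ell_j=m_j=n_j=1$ in the higher-pole case), which is what identifies $M$ with a Hurwitz space; hyperellipticity enters through the involution acting on the Boissy domains and a Weierstrass-point count (Lemma~\ref{le:existschimney}), not through an enumeration of stable graphs.
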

\par
To prove this theorem we need to show that the linear manifold~$M$
containing the $\GL_2(\bR)$-orbit of a given flat surface is not closed
and algebraic unless we are in one of the obvious cases. Providing
a neighborhood of a point in the stratum or in an algebraic compactification
that intersects~$M$ in infinitely many irreducible components would suffice
to rule out a flat surface, just as it is done in the appendix. In practice
we found this difficult to achieve. Instead we design a criterion
(Proposition~\ref{prop:nonTeichCrit}) based on the structure of
boundary intersections of linear manifolds in \cite{BDG}: It suffices
to exhibit a surface in~$M$ with a cylinder (in say, the vertical direction)
and a non-vertical saddle connection outside all vertical cylinders. To apply this
criterion, we use several paths in~$M$, nicknamed 'complex conjugation',
'coordinate dancing' and 'pulling through cylinders', see
Section~\ref{sec:higherpole}. Interestingly, the paths most useful to
derive constraints on flat surfaces $(X,\omega)$ generating \Teichmuller
curves stay in~$M$ but leave the closure of the $\GL_2(\bR)$-orbit of $(X,\omega)$!
\par
Finally, we note that non-obvious \Teichmuller curves do exist in other strata. Examples
are given by closure of the Gothic locus \cite{MMWGothic} in the multi-scale
compactification \cite{BCGGM3}, more precisely the lower level components
of some boundary strata, as explained in detail in \cite{Schwab}.
Moving forward, it would be interesting to obtain a more conceptual
understanding of \Teichmuller\ curves in meromorphic strata, similar
to what is known in the holomorphic case.

%%%%%%%%%%%%%%%%%%%%%%%%%%%%%%%%%%%%%%%%%%%%%%%%%%%%%%%%%%%%

%%%%%%%%%%%%%%%%%%%%%%%%%%%%%%%%%%%%%%%%%%%%%%%%%%%%%%%%%%%%
%%%%%%%%%%%%%%%%%%%%%%%%%%%%%%%%%%%%%%%%%
\section{Obvious Teichm\"uller curves} \label{sec:background}
%%%%%%%%%%%%%%%%%%%%%%%%%%%%%%%%%%%%%%%%%

This section collects the background material on \Teichmuller curves,
on components of strata of meromorphic differentials and provides
a classification of 'obvious' \Teichmuller curves in hyperelliptic
components of strata. We assume that the reader is familiar
with basic notions about strata of differentials and flat surfaces, such
as period coordinates, the $\GL_2(\bR)$-action and Veech groups. Reference
for this includes the surveys \cite{zorich06,FiSurvey}.

%%%%%%%%%%%%%%%
\subsection{Characterizations of \Teichmuller curves}
\label{sec:charteich}
%%%%%%%%%%%%%%%

A \emph{Teichm\"uller curve  in a holomorphic stratum} $\omoduli[g](\mu)$
of the moduli space of flat surfaces generated by a
flat surface $(X,\omega)$ admits several equivalent characterizations.
\par
\begin{prop}
A map $C \to \moduli[g]$ from a complex curve to the moduli space
of curves is a \emph{Teichm\"uller curve} generated by a holomorphic abelian
differential if one of the following equivalent conditions hold
\begin{itemize}
\item[i)] The map is an immersion of a \emph{totally geodesic curve}
whose Teichm\"uller maps are generated by a quadratic differential
$q=\omega^2$ which is a square of an abelian differential.
\item[ii)] The curve~$C$ is the quotient by $\SO_2(\bR)$ of the
orbit $\SL_2(\bR) \cdot (X,\omega)$ of a flat surface which is
\emph{closed} in $\omoduli[g](\mu)$.
\item[iii)] The curve~$C$ is the image of the $\SL_2(\bR)$-orbit 
of a flat surface whose Veech group is a lattice in $\SL_2(\bR)$.
\item[iv)] The curve~$C$ is the image of a $2$-dimensional
subvariety~$M$ defined by $\bR$-linear equations in the period coordinates
of $\omoduli[g](\mu)$ under the forgetful map $\omoduli[g](\mu)
\to \moduli[g]$.
\item[v)] The variation of Hodge structures over~$C$ has a rank two
local subsystem which is maximal Higgs in the sense of \cite{moeller06}.
\end{itemize}
\end{prop}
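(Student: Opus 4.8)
The plan is to treat all five conditions as different shadows of a single object, the \emph{Teichm\"uller disc}: the orbit $\SL_2(\bR)\cdot(X,\omega)\subset\omoduli[g](\mu)$ together with its image $\bH\cong\SL_2(\bR)/\SO_2(\bR)\to\mathcal{T}_g$ in Teichm\"uller space, which descends to the map $C\to\moduli[g]$. The unifying observation is that the orbit map $\SL_2(\bR)\to\omoduli[g](\mu)$ factors through $\SL_2(\bR)/\Gamma$, where $\Gamma$ is the Veech group of $(X,\omega)$, so that the analytic, dynamical and arithmetic data appearing in the five statements are all encoded in this one immersion and its stabilizer. The proof is then an assembly of the classical equivalences, and I would organize it as a cycle around this disc.

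First I would establish (i)$\Leftrightarrow$(ii)$\Leftrightarrow$(iii), essentially due to Veech \cite{veech89} (see also the surveys \cite{zorich06,FiSurvey}). The $\SL_2(\bR)$-orbit is always an isometrically immersed complex geodesic for the Teichm\"uller metric, and its projection to $\moduli[g]$ is totally geodesic with Teichm\"uller maps generated by $q=\omega^2$; this gives (ii)$\Rightarrow$(i), while the converse follows from Teichm\"uller's uniqueness theorem identifying complex geodesics with such discs. For (ii)$\Leftrightarrow$(iii) the point is that, since $\Gamma$ is always a discrete subgroup of $\SL_2(\bR)$, the orbit map $\SL_2(\bR)/\Gamma\to\omoduli[g](\mu)$ is injective, and it is proper with closed image exactly when $\Gamma$ is a lattice: a lattice forces a finite-volume, hence closed, orbit, while closedness of a two-dimensional orbit forces finite covolume of $\Gamma$ by a no-escape-of-mass argument.

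Next I would handle (iv) through period coordinates. In coordinates on the relative cohomology $H^1(X,Z(\omega);\bC)$ the $\SL_2(\bR)$-action is $\bR$-linear and the tangent space to the orbit is the two-real-dimensional plane spanned by $\Re\omega$ and $\Im\omega$; hence the closed orbit of (ii) is cut out locally by $\bR$-linear equations, giving (ii)$\Rightarrow$(iv), and such a closed $\bR$-linear submanifold is automatically algebraic. For (iv)$\Rightarrow$(ii), a two-dimensional $\bR$-linear submanifold is $\GL_2(\bR)$-invariant and contains the orbit, so by dimension it coincides with an orbit closure, which in the two-dimensional case is a single closed orbit.

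Finally, (v) is the Hodge-theoretic characterization of M\"oller \cite{moeller06}: a Teichm\"uller curve carries a rank-two local subsystem of the weight-one variation of Hodge structure whose Higgs field attains the Arakelov bound (is maximal Higgs), and conversely a maximal Higgs rank-two subsystem forces the period map to parametrize a Teichm\"uller disc. The two substantive steps are this last equivalence and the closed-orbit/lattice implication of (ii)$\Rightarrow$(iii): the former draws on the theory of maximal Higgs fields and Deligne semisimplicity, the latter on the discreteness and covolume structure of Veech groups, and I expect the equivalence with (v) to be the main obstacle. The remaining implications are a dictionary translating the same disc between its flat, hyperbolic and linear-algebraic descriptions.
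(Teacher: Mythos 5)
Your overall architecture is the same as the paper's: the proposition is proved as an assembly of known results organized around the Teichm\"uller disc, with (ii)$\Leftrightarrow$(iv) treated as essentially tautological (the paper literally calls it ``nearly a tautology, passing from the $\SL_2(\bR)$-orbit to the $\GL_2^+(\bR)$-orbit''), and (v) delegated to \cite{moeller06} exactly as you do. So the skeleton is right; the problem is in the one place where you substitute your own micro-arguments for a citation, namely (ii)$\Leftrightarrow$(iii).

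There the paper cites Smillie--Weiss \cite{swminimal}, and for good reason: both directions of that equivalence are a genuine theorem, not the routine dictionary entries you present them as. Your claim that ``a lattice forces a finite-volume, hence closed, orbit'' would fail as stated: the inference from finite invariant measure to closedness is a fact about orbits in \emph{homogeneous} spaces $G/\Gamma'$, and the stratum $\omoduli[g](\mu)$ is not homogeneous, so nothing soft gives closedness of the orbit of a lattice surface. Likewise, the converse (closed two-dimensional orbit $\Rightarrow$ lattice Veech group) is not a generic ``no-escape-of-mass argument''; the actual proof in \cite{swminimal} rests on quantitative nondivergence for the horocycle flow on strata. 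Attributing this equivalence ``essentially to Veech'' \cite{veech89} compounds the issue, and your closing assessment inverts the difficulty balance: you single out (v) as the main obstacle while treating (ii)$\Leftrightarrow$(iii) as routine, whereas in the paper's economy both are black-box citations of comparable weight ((v)$\Leftrightarrow$(iii) is the main content of \cite{moeller06}, (ii)$\Leftrightarrow$(iii) is Smillie--Weiss). A smaller remark on (i)$\Leftrightarrow$(ii): your appeal to Teichm\"uller's uniqueness theorem alone skips the ingredient the paper makes explicit, namely that the Teichm\"uller metric coincides with the Kobayashi metric \cite{mcmullenbild}, which is what upgrades a totally geodesic holomorphic curve to an isometrically immersed Teichm\"uller disc. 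Replace your two sketched sub-arguments for (ii)$\Leftrightarrow$(iii) with the citation to \cite{swminimal} (or reproduce its nondivergence input), and the proposal matches the paper's proof.
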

\par
Note that this proposition does not suppose~$C$ to be algebraic.
Algebraicity for~$C$ is a general property of quotients of the
upper half plane by cofinite Fuchsian groups. Algebraicity of the
embedding can be seen as a consequence of Chow's theorem or, including
the case of higher dimensional linear manifolds, of Filip's theorem
\cite{Filip}.
\par
\begin{proof}
The equivalence of~(i) and~(ii) is a consequence of \Teichmuller's
theorem and the fact that the \Teichmuller metric is the Kobayashi
metric, see \cite{mcmullenbild}.
The equivalence of~(ii) and~(iii) is shown by Smillie-Weiss \cite{swminimal}.
The equivalence of~(ii) and~(iv) is nearly a tautology, passing
from the $\SL_2(\bR)$-orbit to the $\GL_2^+(\bR)$-orbit.
The equivalence of~(v) and~(iii) is the main content of~\cite{moeller06}.
\end{proof}
\par
In meromorphic strata we recall that (ii) and (iii) do not give
interesting classes of objects in meromorphic strata.
\par
First, as noticed by Valdez in \cite{ValdezVG}, the Veech group of a
meromorphic flat surface is (up to conjugation and $\pm {\rm Id}$) either
a finite subgroup of the rotation group, a cyclic parabolic group, or
a $2$-dimensional Lie group, the stabilizer of $(1,0)^T$ in $\GL_2^+(\bR)$.
Hence an analog of condition~(iii) never occurs.
\par
Recall that the \emph{core $\cC(X)$} of a flat surface~$(X,\omega)$ is
the convex hull of the saddle connections of $(X,\omega)$. It is a polygon
in~$X$ bounded by saddle connections and possibly with empty interior.
\par
Second, as noticed by Tahar in \cite{TaharVG}, the $\GL_2^+(\bR)$-orbit of
a meromorphic flat surface is closed if and only if all of its saddle
connections are parallel. Such surfaces are easy to construct, abundant
but nowhere dense in a stratum, and the $\GL_2^+(\bR)$-orbits are just
$\bC^*$-orbits, linear of dimension~$1$ in period coordinates.
\par
Tahar also remarks that surfaces with closed $\SL_2(\bR)$-orbit are abundant,
namely where the boundary of the core contains two linearly independent saddle
connections (and the Veech group is trivial), or where the core consists of
a collection of cylinders with commensurable moduli (and the Veech group is
cyclic parabolic). Again, these are abundant and easy to construct.
We thus do not consider condition~(ii). 
\par
We thus define a \emph{Teichm\"uller curve in a meromorphic stratum}
in analogy to condition~(iv), as in the introduction, including
the algebraicity hypothesis. (See the appendix for an examples where
this algebraicity condition is violated.)
We also call (slightly abusing dimension
notation) a Teichm\"uller curve the two-dimensional linear manifold
$M \to \omoduli[g](\mu)$ in a stratum of meromorphic differentials.
Let $(X,\omega)$ be a flat surface in~$M$. Contrary to the holomorphic case,
this $\GL_2^+(\bR)$-orbit is never equal to~$M$, as it sweeps out only
one of the chambers of the Teichm\"uller curve bounded by loci of parallel
saddle connections.
\par
We call a meromorphic flat surface $(X,\omega)$ a \emph{(meromorphic) Veech
surface}, if its $\GL_2^+(\bR)$-orbit is contained in a \Teichmuller
curve~$M$ and equal to~$M$ on an open subset of~$M$. We say that $(X,\omega)$
is \emph{generated by} $(X,\omega)$ in this case.
\par
It would be interesting to have a characterization of \Teichmuller curves
in metric terms or Hodge theory, as in~(i) or~(v).

%%%%%%%%%%%%%%%
\subsection{Components of strata} \label{sec:components}
%%%%%%%%%%%%%%%

Boissy classified in \cite{boissymero} the connected component of meromorphic
strata. A \emph{hyperelliptic component} in a stratum $\omoduli[g](\mu)$ is a
component that consists exclusively of hyperelliptic curves.
Recall from \cite{boissymero} that a signature $\mu$ is called of
\emph{hyperelliptic type}, if the polar part is of the form $\{-p,-p\}$
or $\{-2p\}$ for some $p \in \bN$ and if the zero part is of the
form $\{m,m\}$ or $\{2m\}$ for some $m \in \bN$. Then Boissy shows that
hyperelliptic components exist in meromorphic strata $\omoduli[g,n](\mu)$
precisely if the signature is of hyperelliptic type.
(The full classification of components distinguishes moreover the
spin parity and in genus one the divisibility of the rotation numbers,
see \cite{boissymero} for details.)
%The signature~$\mu$ is
%of \emph{even type} if all entries of~$\mu$ are even, except possibly
%a single pair $\{-1,-1\}$. , and there are at most two of them,
%the latter exactly if the signature is of even type. (...)
%\par
%MERGE:
%Let $\mu =(m_1,\ldots,m_n)$ be the \emph{type} of a stratum,
%i.e.\ a tuple of integers with sum equal to $2g-2$. We will
%say that $\mu$ is a \emph{holomorphic type} if all $m_i \geq 0$
%and of \emph{meromorphic type} otherwise. We let $\omoduli[g,n](\mu)$
%be the stratum of differentials on curves of genus~$g$ with~$n$
%zeros and poles, marked and of type~$\mu$ and we call these strata
%holomorphic and meromorphic according to the type.
%\par
%

%%%%%%%%%%%%%%%%%%%%%%%%%%%%%%%%%%%%%%%%%
\subsection{The obvious examples} \label{sec:obviousexamples}
%%%%%%%%%%%%%%%%%%%%%%%%%%%%%%%%%%%%%%%%%

In this section we classify the obvious examples of  \Teichmuller curves
in hyperelliptic strata: We call a \Teichmuller curve \emph{obvious}, if
it is the intersection of a Hurwitz space with a locus prescribed by
residue conditions as follows.
\par
Recall that meromorphic strata admit a \emph{residue map} ${\rm res}:
\omoduli[g](\mu)
\to \bC^p$ defined by integrating cycles around the poles. Here $p$ is the
number of negative entries in~$\mu$. By the residue theorem the image is
contained in the hypersurface
$\bC^p_{\rm res}$ where the coordinates sum to zero. Since the residue map is
an algebraic morphism, the preimage of ($\bR$-)linear subvarieties of
$\bC^p_{\rm res}$ are algebraic ($\bR$-)linear subvarieties of $\omoduli[g](\mu)$,
including Teichm\"uller curves, if the number of poles permits cutting down the
dimension sufficiently.
%We refer to this obvious class of examples as
%\emph{Teichm\"uller curves defined by residue conditions}.
\par
In general meromorphic strata there is a zoo of obvious \Teichmuller curves
taking a genus zero signature~$\ol{\mu}$ with~$n$ entries, including $k$ higher
order poles and imposing $n-4$ linear conditions on the residues of these poles.
(This requires $k \geq n-3$ of course.) In hyperelliptic components, the
possibilities are rather limited:
\par
\begin{prop} \label{prop:obviousTeich}
The obvious \Teichmuller curves in hyperelliptic components of
meromorphic strata are Hurwitz spaces $\Hur(d,\ol{\mu})$ parameterizing
degree~$d$ covers 
\begin{itemize} %[nosep]
\item[(i a)] of flat surfaces in the stratum
$\omoduli[0](\ol{\mu})$
with $\ol{\mu} = (m-1,m-1,-m,-m)$, fully ramified over both zeros
and both poles and unramified elsewhere, or
\item[(i b)] of flat surfaces in the stratum
$\omoduli[0](\ol{\mu})$ with $\ol{\mu} = (0,0,-1,-1)$,
fully ramified over both poles and the zeros having a unique
ramification point of the same order in their fibers, or
\item[(ii a)] of flat surfaces in the stratum $\omoduli[0](\ol{\mu})$
with $\ol{\mu} = (m,-m)$ and with $d$ odd, fully ramified over the zero
and the pole and unramified elsewhere.
\item[(ii b)] of flat surfaces in the stratum $\omoduli[0](\ol{\mu})$
with $\ol{\mu} = (m,-m)$  with $d$ even, splitting into two subcases
depending on whether over the zero and the pole there is full
ramification or two points of ramification order $d/2$ and unramified elsewhere.
\item[(iii a)] of flat surfaces in the residue-zero locus of
the stratum $\omoduli[1](\ol{\mu})$ with $\ol{\mu} = (m,-m/2,-m/2)$
and with $d$ odd, fully
ramified over the zero and the poles and unramified elsewhere, or
\item[(iii b)] of flat surfaces in the residue-zero locus of
the stratum $\omoduli[1](\ol{\mu})$ with $\ol{\mu} = (m,-m/2,-m/2)$ fully
ramified the poles and ramified to order $d/2$ over the zero and
unramified elsewhere.
\end{itemize}
\end{prop}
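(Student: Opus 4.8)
My plan is to prove the two directions separately: that each of the six families is an obvious Teichm\"uller curve contained in a hyperelliptic component, and that these exhaust the possibilities. The structural input is that an obvious Teichm\"uller curve is, by definition, the intersection of a Hurwitz space $\Hur(d,\ol{\mu})$ of degree-$d$ covers $\pi\colon X\to B$ of a base stratum $\omoduli[g_0](\ol{\mu})$ with a residue-condition locus, and that it is a Teichm\"uller curve exactly when this locus is two-dimensional. Because the periods of $\omega=\pi^{*}\ol{\omega}$ are pulled back from those of $\ol{\omega}$, the Hurwitz space is locally isomorphic in period coordinates to the base locus, so two-dimensionality is a condition on the base. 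Writing $n_0$ for the number of entries of $\ol{\mu}$ and using the meromorphic dimension formula $\dim\omoduli[g_0](\ol{\mu})=2g_0+n_0-2$, the condition that the base locus (cut out by $r$ independent residue conditions) be two-dimensional reads
\[
2g_0+n_0-r=4 .
\]

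To pin down $(g_0,n_0,r)$ I would combine three constraints. First, since the curve lies in a hyperelliptic component every surface $X$ is hyperelliptic; its involution $\sigma$ satisfies $\sigma^{*}\omega=-\omega$ and is compatible with the canonical covering, descending to an anti-invariant involution $\ol{\sigma}$ of $B$ with rational quotient, so Riemann--Hurwitz gives $g_0\le 1$. Second, by Boissy's classification the signature of $\omega$ has polar part $\{-p,-p\}$ or $\{-2p\}$ and zero part $\{m,m\}$ or $\{2m\}$, so $X$ carries at most two poles and at most two zeros; as distinct singularities of $\ol{\omega}$ have disjoint preimages, each contributing a singularity of the same sign, the base has at most two poles and at most two zero-producing points, whence $n_0\le 4$. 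Third, since residues sum to zero the number of independent residue conditions is at most the number of poles minus one, so $r\le 1$. Solving $2g_0+n_0-r=4$ under $n_0\le 4$ and $r\le 1$ leaves precisely $(g_0,n_0,r)=(0,4,0)$, $(1,2,0)$ and $(1,3,1)$. Moreover $\ol{\sigma}$ cannot fix two distinct zeros (this would give two separate even-order zeros $\{2m\}\sqcup\{2m\}$, not of hyperelliptic type), so two base zeros must be swapped by $\ol{\sigma}$, and likewise two base poles, the swapped points being of the same local type.

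Next I would read off the covers. A base zero of order $a$ with ramification index $e$ produces a zero of order $e(a+1)-1$, and a base pole of order $b$ a pole of order $e(b-1)+1$. In the genus-zero case $(0,4,0)$ the base has two swapped zeros and two swapped poles, each necessarily totally ramified (a single preimage, to keep the upstairs counts at two) with everything else unramified; honest zeros of order $m-1$ and poles of order $m$ give the signature $\{dm-1,dm-1,-(dm-d+1),-(dm-d+1)\}$ of case (i~a), while order-zero marked points together with simple poles give case (i~b). In the genus-one cases the base is $(m,-m)$, respectively $(m,-m/2,-m/2)$ on the residue-zero locus, and the same pullback computation yields the single-zero/single-pole families of (ii) and the families of (iii). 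The split into the odd and even subcases (ii~a)/(ii~b) and (iii~a)/(iii~b) reflects whether the totally ramified preimage of an $\ol{\sigma}$-fixed singularity is itself fixed by the lift $\sigma$, hence a Weierstrass point giving a single even-order zero or pole $\{2m\}$/$\{-2p\}$, or occurs as a swapped pair giving $\{m,m\}$/$\{-p,-p\}$; this is controlled by the parity of $d$ and the local monodromy.

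The main obstacle is exactly this last point: verifying genuine hyperellipticity rather than merely hyperelliptic-type signature. Concretely, one must show that the prescribed symmetry of the base configuration lifts to an honest involution $\sigma$ of $X$ with $X/\sigma\cong\bP^1$ and the correct action on zeros and poles, and determine for which parities of $d$ this occurs. I expect the cleanest route is to pass to quotient quadratic differentials, realizing the hyperelliptic component as a genus-zero stratum of quadratic differentials on $X/\sigma\cong\bP^1$ and the covering as a degree-$d$ map $X/\sigma\to B/\ol{\sigma}$ of such quotients, and then to track how the double cover $X\to X/\sigma$ ramifies over the zeros and poles of the quadratic differential --- the parity of $d$ deciding whether the relevant branch points are Weierstrass points. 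Carrying out this monodromy analysis, while simultaneously excluding the dimensionally admissible but non-symmetric ramification profiles, is where the real work lies; the residual order computations and the existence check for the six families are then routine.
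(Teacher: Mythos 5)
Your skeleton --- the dimension count $2g_0+n_0-r=4$, the bound $r\le 1$ from having at most two poles, the pullback order formulas $e(a+1)-1$ and $e(b-1)+1$, and the observation that a hyperelliptic-type signature admits at most two zeros and two poles upstairs --- is exactly the engine of the paper's proof, and your three base cases $(g_0,n_0,r)=(0,4,0)$, $(1,2,0)$, $(1,3,1)$ are the ones that appear there. But the step you lean on hardest is a genuine gap: the claim that the hyperelliptic involution $\sigma$ of $X$ descends along an arbitrary degree-$d$ cover $\pi\colon X\to B$ to an involution $\ol{\sigma}$ of $B$. Nothing forces $\sigma$ to permute the fibers of $\pi$, you give no argument, and the paper never uses such a descent. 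Even granting it, your stated deduction fails: an involution with rational quotient exists on hyperelliptic curves of \emph{every} genus, so ``Riemann--Hurwitz gives $g_0\le1$'' is a non sequitur. The correct (and elementary) route, which your own constraints already contain, is that $g_0\ge 2$ forces $n_0=r\le 1$, impossible for a meromorphic signature since $r$ is at most the number of poles minus one. Similarly, the assertion that two base zeros must be ``swapped by $\ol{\sigma}$'' is never needed: the paper only uses that the two zeros upstairs have equal order, which forces equal base orders directly. A related smaller gap: your bound $n_0\le 4$ assumes every base point produces a singularity upstairs, but an order-zero marked point with all preimages unramified produces none; the paper excludes such points by noting that forgetting them shows the projectivized image upstairs is a point, hence not a \Teichmuller curve.

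The second problem is that you explicitly defer the steps that constitute the remaining proof. Excluding the ``dimensionally admissible but non-symmetric'' profiles is a one-line Riemann--Hurwitz count in the paper: for the base $(m,-m)$ in genus one, full ramification over the zero and two points of order $d/2$ over the pole gives $2g-2=(d-1)+2(d/2-1)=2d-3$, which is odd --- contradiction. The parity splits (ii\,a)/(ii\,b) and (iii\,a)/(iii\,b) then come from the integrality of $d/2$ and from order bookkeeping (e.g.\ in case (iii) a single zero upstairs has order $d(m+1)-1$, whose parity is dictated by that of $d$ since $m$ is even); no monodromy analysis or lifting of involutions is involved. Finally, the quadratic-differential quotient machinery you propose for certifying ``genuine hyperellipticity'' of each cover is not required for the proposition as stated: the paper's proof only shows that any obvious \Teichmuller curve lying in a hyperelliptic component must have one of the listed base data and ramification profiles, and the classification of irreducible (in particular hyperelliptic) components of these Hurwitz spaces is explicitly left open in the introduction. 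So your proposal identifies the right enumeration but substitutes an unproved symmetry principle for the paper's elementary counting, and leaves unexecuted precisely the short arguments that close the case analysis.
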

\par
\begin{proof}
The projectively one-dimensional strata without residue conditions
are $\ol{\mu} = (m_1,m_2,m_3,m_4)$ with $\sum m_i = -2$ in $g=0$
and $\ol{\mu} = (m,-m)$ in $g=1$. Each preimage of a zero (i.e., $m_i>0$) gives a
zero and each preimage of a pole (i.e., $m_i<0$) 
gives a pole, each of which there are at most two. Consequently
the possibilities for the zeros among $\ol{\mu}$ are one zero fully
ramified, one zero with two preimages ramified to order $d/2$, or
two zeros fully ramified. The analogous statement holds for the poles.
Moreover, the final possibility is  some $m_i=0$, but in this case there needs
to be ramification over it (otherwise the Hurwitz space is a point).
However, the ramification profile might now be one or two ramified
preimages with an arbitrary number of additional unramified sheets.
For $g=0$, excluding simple poles, this leaves only
the possibility $\ol{\mu} = (m-1,m-1,-m,-m)$ as in case (ia), since
there are four special points to be taken care of. If there is a
simple pole, it will be preserved under coverings and so there cannot
be any higher order pole, leaving only the possibility in case (ib).
For $g=1$ we derived all the restrictions listed in cases (iia) and (iib).
The cases of different ramification orders of the zero and the pole
are excluded by Riemann-Hurwitz.
\par
Next consider strata with residue conditions. Since they are associated
with poles that don't disappear under coverings, there are at most two
poles involved and thus at most one residue condition. For $g=0$ the
tuple now has five entries, leading to at least three zeros or three
poles, impossible for hyperelliptic strata. For $g=1$ the only case
is $\ol{\mu} = (m_1,m_2,m_3)$ with $m_1 > 0 > m_2$ and $m_3 < 0$.
The presence of two poles implies full ramification over them and
thus $m_2 = m_3$ by hyperellipticity. This leaves the cases in (iiia)
and (iiib) only. 
\end{proof}
\par
The geometry of the strata $\omoduli(m,-m)$ has been studied in
detail in \cite{tahar} in terms of the wall-and-chamber decomposition
determined by the geometry of the core (the convex hull of the saddle
connections) of the meromorphic differential.
\par

%%%%%%%%%%%%%%%%%%%%%%%%%%%%%%%%%%%%%%%%%%%%%%%%%%%%%%%%%%%%

%%%%%%%%%%%%%%%%%%%%%%%%%%%%%%%%%%%%%%%%%%%%%%%%%%%%%%%%%%%%
%\input{sec_isoper}
%%%%%%%%%%%%%%%%%%%%%%%%%%%%%%%%%%%%%%%%%%%%%%%%%%%%%%%%%%%%

%%%%%%%%%%%%%%%%%%%%%%%%%%%%%%%%%%%%%%%%%%%%%%%%%%%%%%%%%%%%
%%%%%%%%%%%%%%%%%%%%%%%%%%%%%%%%%%%%%%%%%
\section{A non-existence criterion} \label{sec:nonexistence}
%%%%%%%%%%%%%%%%%%%%%%%%%%%%%%%%%%%%%%%%%

In this section we provide a criterion to rule out \Teichmuller curves
based on a degeneration statement in \cite{BDG}, which in turn is based
on the existence of a good compactification from \cite{BCGGM3}. To apply
this criterion, we start with a summary of how to present meromorphic flat
surfaces. The proof of the classification Theorem~\ref{thm:main} is
completed at the end of this section.

%%%%%%%%%%%%%%
\subsection{Boissy's infinite zippered rectangle construction and a variant
with cylinders.} \label{sec:Boissy}
%%%%%%%%%%%%%%

We briefly recall the presentation of meromorphic flat surfaces from
\cite[Section~3.3]{boissymero}. Boissy starts with a meromorphic flat
surface~$(X,\omega)$ oriented such that the vertical
direction does not admit any saddle connection. The result of his construction
is a decomposition of the surface into half-planes bounded by broken lines,
in fact saddle connections and two infinite horizontal separatricies, and
infinite cylinders with non-vertical core curves bounded by broken lines composed
of saddle connections. There is one such cylinder for each simple pole. This
datum can be encoded by the gluing combinatorics of the saddle connections and
separatricies,
as well as the periods of the saddle connections. These half planes and
infinite cylinders are called \emph{basic domains}. We refer to the saddle connections
on the boundary of these basic domains as \emph{boundary saddle connections} and
denote them by $v_i$ or $v_i^\pm$ if we need to specify the two segments after the
surface has been cut open. We write $\Per(v_i) := \int_{v_i} \omega$ for the period
of any saddle connection. Since these periods give local coordinates of the
stratum we also refers to boundary saddle connections briefly as \emph{coordinates}.
Conversely, given a collection of basic domains with side pairings and periods of the
boundary saddle connections allows to construct the surface uniquely.
\par
Boissy's algorithm decompose the surface starts with Strebel's classification
of the vertical trajectories. It provides a decomposition of the surface into
half-planes and half-infinite \emph{vertical} strips. Each left half plane has a single
singularity at its right boundary. It is cut open along the horizontal separatrix
starting there. The two pieces are the start of an upper and lower half plane
assembled by 
attaching vertical strips until the process terminates with a right half plane cut
along the horizontal separatrix starting at the singularity at its left boundary.
The remaining vertical strips are assembled similarly to infinite cylinders with
non-vertical core curves.
\par
Our variant allows meromorphic flat surfaces~$(X,\omega)$ with saddle connections
in vertical direction, but only if these bound cylinders with core curves in the
vertical direction. After removing these cylinders the rest of the surface can
be decomposed by following Boissy's algorithm above verbatim. It merely requires
to consistently choose how to treat vertical saddle connections when assembling the
basic domains: We orient the saddle connections on the boundary of the basic
domain working from left to right as above, and then require that the period
of vertical saddle connections has \emph{positive} imaginary part. This yields:
\par
\begin{prop} \label{prop:variantBoissy}
A meromorphic flat surface can be decomposed uniquely into a finite number
of finite area cylinders with vertical core curves and a finite collections
of Boissy's basic domains.
\end{prop}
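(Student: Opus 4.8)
The plan is to reduce the statement to Boissy's original decomposition (\cite[Section~3.3]{boissymero}, recalled at the start of this subsection) by first stripping off the vertical cylinders, so that what remains satisfies Boissy's hypothesis of carrying no vertical saddle connection. Concretely, I would isolate the vertical cylinders intrinsically: let $U \subseteq X$ be the union of all closed vertical trajectories. By Strebel's classification of the vertical foliation, $U$ is open, and its connected components $C_1,\dots,C_k$ are exactly the maximal cylinders with vertical core curve. This intrinsic description is what will eventually force uniqueness, since $U$ depends only on $(X,\omega)$ together with the chosen vertical direction.

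For existence I would verify three facts about the $C_i$. First, each $C_i$ has \emph{finite area}: a cylinder swept out by closed vertical leaves cannot contain a pole, since trajectories accumulating at a pole are never closed, so $C_i$ is a genuine Euclidean cylinder of finite height, and its circumference is the finite sum of the lengths of its vertical boundary saddle connections. Second, there are only \emph{finitely many} $C_i$, because they are disjoint open cylinders with pairwise non-isotopic core curves, and a surface of finite topological type carries only boundedly many such. Third, after deleting the open cylinders the bordered surface $X' := X \setminus \bigcup_i C_i$ has \emph{no vertical saddle connection in its interior}: by the hypothesis of the variant every vertical saddle connection of $(X,\omega)$ bounds one of the $C_i$, so it is pushed onto the boundary $\partial X'$. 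Hence Boissy's algorithm applies to $X'$ verbatim, cutting along the horizontal separatrices and assembling half-planes together with the non-vertical infinite cylinders attached to the simple poles; these are the basic domains.

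Uniqueness then splits into two stages. The cylinders $C_1,\dots,C_k$ are forced, being the connected components of the canonically defined set $U$. The decomposition of $X'$ into basic domains is unique by Boissy's result applied to $X'$, once the bookkeeping of the vertical saddle connections shared with the cylinders is fixed; this residual ambiguity is removed by the orientation convention adopted above, namely that a vertical boundary saddle connection is oriented so that $\Per$ has positive imaginary part. Consequently the list of cylinders, the list of basic domains, and all their side-pairings and periods are determined by $(X,\omega)$.

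The hard part is the interface between the removed cylinders and Boissy's procedure: one must check that the algorithm really runs \emph{verbatim} on the bordered surface $X'$, i.e.\ that cutting along horizontal separatrices and gluing in vertical strips is unaffected by the new boundary circles, and that each vertical boundary saddle connection gets assigned to exactly one basic domain on the $X'$ side and to one cylinder on the other, so that regluing recovers $(X,\omega)$. A secondary point is the simple poles: one should confirm that under the chosen orientation the half-infinite cylinder at each simple pole has non-vertical core, hence is recorded among the basic domains rather than among the finite-area vertical cylinders, so that the two families in the statement are genuinely disjoint.
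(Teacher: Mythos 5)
Your route coincides with the paper's own proof: Proposition~\ref{prop:variantBoissy} is justified in Section~\ref{sec:Boissy} exactly by the construction you describe, namely excising the cylinders with vertical core curves, running Boissy's algorithm verbatim on the complement, and removing the residual gluing ambiguity by the convention that boundary saddle connections are oriented left to right with vertical periods having positive imaginary part. Your intrinsic characterization of the cylinders as the connected components of the union $U$ of closed vertical leaves is a clean way of making the uniqueness assertion precise (it is implicit in the paper, via Strebel's classification), and your finiteness count is sound, since distinct maximal flat cylinders in a fixed direction have non-homotopic core curves and a finite-type surface carries only boundedly many disjoint, pairwise non-homotopic essential simple closed curves.

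The one step that is genuinely wrong as argued is the finite-area claim. The assertion that ``a cylinder swept out by closed vertical leaves cannot contain a pole, since trajectories accumulating at a pole are never closed'' fails at simple poles: a simple pole has a punctured neighborhood isometric to a half-infinite flat cylinder whose leaves in the direction of the residue are all \emph{closed} geodesics, and together they exhaust that neighborhood. Hence if some simple pole has purely vertical residue, your set $U$ acquires a component of infinite area, which fits into neither family of the statement (Boissy's basic domains include only infinite cylinders with \emph{non-vertical} core). You do re-flag exactly this as a ``secondary point\ldots one should confirm,'' but it is never confirmed in your write-up, and it cannot be derived: it must be built into the standing hypothesis. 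Since the finite end of a maximal vertical cylinder is a union of \emph{vertical} saddle connections, the correct reading of the variant's assumption is that every vertical saddle connection bounds a \emph{finite-area} vertical cylinder, equivalently that no simple pole has vertical residue direction; in the paper's actual applications this is automatic, as in Section~\ref{sec:higherpole} all pole orders are at least two, so infinite cylinders are absent altogether, while in Section~\ref{sec:simplepole} the residue core curves are normalized to be horizontal. With that hypothesis invoked in place of your invalid ``no closed leaves near a pole'' argument, your finite-height and uniqueness reasoning goes through, and the remainder of your proposal, including the deferred interface check for running Boissy's algorithm on the bordered complement, is at the same level of detail as the paper, which dispatches that point with the word ``verbatim.''
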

\par
We refer to this as the \emph{generalized Boissy presentation}. Note that
the periods of boundary saddle connections will no longer give local coordinates:
some distinct  boundary saddle connections (at the ends of some cylinders)
may be exchanged by the hyperelliptic involution~$h$ and periods of $h$-orbits 
give coordinates. Nevertheless we abbreviate 'boundary saddle connection'
as 'coordinate'.

%%%%%%%%%%%%%%
\subsection{Equations of linear manifolds at the boundary}
%%%%%%%%%%%%%%

Suppose that $M \subset \omoduli(\mu)$ is an algebraic linear manifold and suppose
that it intersects the boundary stratum given by a level graph~$\Gamma$.
Our main criterion to rule out the existence \Teichmuller curves is
\cite[Theorem~1.5]{BDG}, that characterizes the generic points of
boundary components of the closure $\ol{M} \subset \LMS$. For
a \Teichmuller curve the boundary consists just of points up to the $\bC^*$-rescaling
of the differential. We may restate their criterion as:
\par
\begin{prop} \label{prop:BDGcrit}
  The boundary points of \Teichmuller curves have level graphs with
only horizontal nodes (i.e.\ just one level) or with two levels and
no horizontal nodes.
\end{prop}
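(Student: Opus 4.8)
The plan is to read the statement off as a direct consequence of the boundary structure theorem \cite[Theorem~1.5]{BDG}, once we observe that for a \Teichmuller curve the closure in the multi-scale compactification is literally a curve. First I would consider the closure $\ol{M} \subset \LMS$ of the two-dimensional linear manifold $M$. Since $M$ is invariant under the $\bC^*$-rescaling of the differential and is projectively one-dimensional, modulo this rescaling $\ol{M}$ is a projective curve, so its boundary is a finite set of points. Each such point is its own irreducible boundary component, i.e.\ a codimension-one component of the boundary of $\ol{M}$. This reduces the proposition to classifying which level graphs $\Gamma$ can index a codimension-one boundary component of a linear manifold.

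Next I would feed this reduction into \cite{BDG}. Their description says that the generic point of each boundary component of $\ol{M}$ lying in the stratum $D_\Gamma$ is, up to the action of the level rotation torus, a product of linear manifolds supported on the individual levels of $\Gamma$. The associated codimension count reads
\[
\operatorname{codim}_{\ol{M}}\!\big(\ol{M} \cap D_\Gamma\big) \;=\; L + h,
\]
where $L$ is the number of level passages of $\Gamma$ (so that $\Gamma$ has $L+1$ levels) and $h$ is the number of independent horizontal degenerations imposed by the $\bR$-linear equations cutting out $M$. Setting this equal to $1$, as forced by the first paragraph, leaves exactly two solutions: $L=0$ with $h\ge 1$, a single-level graph carrying only horizontal nodes; and $L=1$ with $h=0$, a two-level graph with no horizontal nodes. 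This is precisely the claimed dichotomy, and linearity may force several horizontal nodes to collapse simultaneously, which is why the one-level case is allowed to carry more than one horizontal node.

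The hard part will be two points of bookkeeping that \cite{BDG} is designed to supply. First, one must guarantee that \emph{every} boundary point of the curve $\ol{M}$—not merely the generic point of some higher-dimensional component—falls under the dichotomy; concretely, one must rule out that $\ol{M}$ dips into a stratum $D_\Gamma$ with $\Gamma$ of depth at least two (three or more levels, or a level passage together with horizontal nodes) along an isolated boundary point. This is exactly the content of the local product description in \cite{BDG}: a codimension-one boundary component cannot be contained in a stratum of codimension at least two. Second, one must verify the normalization of the codimension count, namely that each level passage contributes $1$ and that each linearly-forced group of horizontal nodes contributes $1$ rather than its cardinality; this grouping phenomenon, peculiar to linear manifolds, is what makes the single-level case genuinely allow several nodes. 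Granting these two inputs, the proposition follows immediately from the reduction in the first paragraph.
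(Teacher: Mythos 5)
Your proposal is correct and takes essentially the same route as the paper, which offers no separate proof: it derives the proposition as a direct restatement of \cite[Theorem~1.5]{BDG} after the single observation that the boundary of a \Teichmuller curve in $\LMS$ consists of isolated points up to $\bC^*$-rescaling, so every boundary point is a codimension-one boundary component to which that theorem's dichotomy applies. Your additional codimension bookkeeping is exactly the content BDG supplies rather than a new ingredient, with the one small internal slip that under your own normalization $L+h=1$ the one-level case gives $h=1$ (a single linearly-forced class of simultaneously degenerating horizontal nodes, possibly comprising several nodes), not $h\ge 1$.
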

\par
Our strategy to rule out that a flat surface~$(X,\omega)$ generates a
\Teichmuller curve is thus to exhibit a cylinder, such that shrinking
the core curve of the cylinder leads to a surface with a horizontal
node on lower level. Geometrically this is verified as follows.
\par
\begin{prop} \label{prop:nonTeichCrit}
Let $(X,\omega)$ be a  meromorphic flat surface that admits a cylinder with core
curve in the vertical directions. If the basic domains in the generalized
Boissy presentation are bounded by at least one  saddle connection~$\gamma$
that is not vertical, then $(X,\omega)$ does not generate a \Teichmuller curve.
\par
Equivalently, if  $(X,\omega)$ has a saddle connection in a non-vertical
direction outside the closures of the cylinders with vertical core curves, then
$(X,\omega)$ does not generate a \Teichmuller curve.
\end{prop}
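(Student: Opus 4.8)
The plan is to derive a contradiction from the boundary criterion of Proposition~\ref{prop:BDGcrit}: starting from the cylinder and the saddle connection~$\gamma$, I will produce a point of $\partial\ol M\subset\LMS$ whose level graph is of neither allowed type, namely one with two levels carrying a horizontal node on the lower level. The equivalence of the two formulations is immediate from the generalized Boissy presentation (Proposition~\ref{prop:variantBoissy}): the saddle connections outside the closures of the vertical cylinders are exactly the boundary saddle connections of the basic domains, and the vertical direction carries no saddle connections apart from those bounding the vertical cylinders; so a non-vertical boundary saddle connection of a basic domain is the same datum as a non-vertical saddle connection outside the vertical cylinders.

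For the construction, let $\sigma$ be the core curve of the given vertical cylinder~$C$, so that $\Per(\sigma)$ is purely imaginary. I would degenerate by shrinking~$\sigma$, driving $\Per(\sigma)\to 0$ through surfaces of~$M$; this is realized inside~$M$ by the \Teichmuller geodesic flow $\operatorname{diag}(e^{t},e^{-t})$, which preserves~$M$ and contracts the vertical direction. The non-verticality of~$\gamma$ is what keeps the \emph{top} level non-trivial: its horizontal part $\operatorname{Re}\Per(\gamma)$ is nonzero and is not contracted by the flow, so a macroscopic portion of the surface persists on the top level and the limit does not collapse to a single level.

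With a genuine top level in place, the cylinder~$C$ is scale-separated to a lower level. Because its modulus tends to infinity, and the modulus is scale-invariant, $C$ cannot survive as a finite cylinder on that lower level; instead its core~$\sigma$ pinches there and contributes a horizontal node on the lower level, on top of the edge along which this lower level is attached to the top. The resulting boundary point therefore has two levels together with a horizontal node, exactly the configuration forbidden by Proposition~\ref{prop:BDGcrit}; hence $(X,\omega)$ does not generate a \Teichmuller curve.

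The step I expect to be the main obstacle is this last level-graph identification in the multi-scale compactification of~\cite{BCGGM3}. The delicate point is to rule out the two \emph{allowed} outcomes: a single-level degeneration with only horizontal nodes, and a clean two-level degeneration with no horizontal node. Concretely, one must show that the non-verticality of~$\gamma$ forces honest scale separation (so we are not in the single-level case) while the blow-up of the modulus of~$C$ forces a horizontal node on the lower level (so the two-level graph is not clean). I would carry this out by writing the degeneration in plumbing coordinates and tracking the orders of the zeros and poles at each node created as $\Per(\sigma)\to 0$; once the level graph is read off, the clash with Proposition~\ref{prop:BDGcrit} is immediate.
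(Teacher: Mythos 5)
Your degeneration is the same as the paper's: contract the vertical direction while keeping $\Per(\gamma)$ fixed, and play the limiting level graph against Proposition~\ref{prop:BDGcrit}; the equivalence of the two formulations via Proposition~\ref{prop:variantBoissy} is also handled as the paper does. The problem is that the decisive step --- excluding the two \emph{allowed} boundary types --- is exactly what you defer to an unexecuted plumbing computation, and the one heuristic you do offer is a non sequitur. You argue that the single-level outcome is excluded because a macroscopic portion of the surface persists at the top level; this proves nothing, since in the single-level scenario the macroscopic part and the pinched cylinder sit on the \emph{same} (unique) level, so persistence of $\Re\Per(\gamma)$ is entirely consistent with a one-level, all-horizontal limit --- nothing has to ``collapse'' for that graph to occur. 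The paper's actual argument here is quantitative: if the limit had one level and only horizontal nodes, its normalization would be an honest meromorphic flat surface in a stratum $\omoduli[g',n+2(g-g')](\mu,-1^{2(g-g')})$, with the differential tending to zero on no component; along any path approaching such a boundary point the ratio of the residue of any new simple pole to the length of any interior saddle connection stays bounded above and away from zero, whereas along your path this ratio tends to zero, since the residue is (up to a constant) the core-curve period $\Per(\sigma)\to 0$ while $\lvert\Per(\gamma)\rvert$ is bounded below. This is the missing idea; without it the proof is incomplete.

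The other exclusion you postpone --- that the pinched cylinder cannot fit into a clean two-level graph --- needs no plumbing coordinates at all: a node obtained by pinching the core curve of a cylinder carries simple poles on both branches, while a level-crossing edge carries a zero on its upper branch and a pole of order at least two on its lower branch, so the pinched core is forced to be a horizontal node. Combined with Proposition~\ref{prop:BDGcrit} this leaves only the one-level case, which the ratio argument above eliminates. So your plan points in the right direction and matches the paper's strategy in outline, but as written it establishes only the setup: the level-graph identification you flag as ``the main obstacle'' is the entire content of the proof, and the one substantive claim you make toward it is incorrect as stated.
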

\par
Said differently, candidates for meromorphic Veech surfaces are only
\emph{cylinder-free flat surfaces} and \emph{vertically presentable surfaces},
i.e.\ surfaces that have a cylinder, that we may assume to be vertical, and
then all boundaries of the basic domains are vertical, too.
\par
\begin{proof}
Consider the path in the $\GL_2(\bR)$-orbit of $(X,\omega)$ by shrinking
the vertical direction while maintaining the period of~$\gamma$ constant.
Using Proposition~\ref{prop:BDGcrit} we have to rule out that the resulting
degeneration has a level graph with one level and horizontal edges only.
If the initial flat surface is $(X,\omega) \in \omoduli[g,n](\mu)$, then the
normalization of such a limiting stable curve belongs to a stratum of
the form $\omoduli[g',n+2(g-g')](\mu,-1^{2(g-g')})$. (In particular the limiting
flat surface can be drawn entirely with Boissy's algorithm, there is no
component on which the differential tends to zero.) This implies that the ratio of
the residue of any of the simple pole and the length of any interior saddle
connection is bounded above and away from zero on any path approaching this
limit. This property is violated for the path we described initially.
\end{proof}

%%%%%%%%%%%%%%
\subsection{Hyperelliptic components of meromorphic strata with simple poles}
\label{sec:simplepole}
%%%%%%%%%%%%%%

We start our proof of Theorem~\ref{thm:main} with the easier subcase where
simple poles exist.
\par
\begin{prop}\label{prop:HYPtwosimplepoles}
  For $g\geq 1$, in the hyperelliptic strata with two simple poles $\omoduli[g,2](2g,-1,-1)$ there are no \Teichmuller curves, while in the hyperelliptic strata $\omoduli[g,2](g,g,-1,-1)$ the \Teichmuller curves are the irreducible components of the linear manifold
  $\Hur(d,(g,g,-1,-1))$ given in Proposition~\ref{prop:obviousTeich} case~(i b)
  for $d\geq g+1$.
\end{prop}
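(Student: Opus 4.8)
The plan is to run the non-existence criterion of Proposition~\ref{prop:nonTeichCrit} against \emph{every} generic surface of a hypothetical Teichm\"uller curve $M$, exploiting the rigidity forced by the two simple poles. A simple pole always has nonzero residue, so by the residue theorem the two residues are $\pm r$ with $r\neq 0$, and each pole is the end of a half-infinite cylinder; this residue pins an intrinsic length scale on every surface of $M$, which I will play against the scale of the finite vertical cylinders produced by the criterion. First I would record the hyperelliptic model: a surface in either stratum is a double cover $X\to X/h=\proj^1$ with $h$ acting as rotation by $\pi$, the two simple poles being exchanged by $h$; in $\omoduli[g,2](2g,-1,-1)$ the single zero of order $2g$ sits at an $h$-fixed Weierstrass point, whereas in $\omoduli[g,2](g,g,-1,-1)$ the two zeros of order $g$ are exchanged by $h$. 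By Proposition~\ref{prop:nonTeichCrit} a generic surface of $M$ is either cylinder-free or vertically presentable, so the argument splits along this dichotomy.

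The main case is the vertically presentable one, treated through the generalized Boissy presentation of Proposition~\ref{prop:variantBoissy}: the surface is assembled from finite vertical cylinders and basic domains all of whose boundary saddle connections are vertical, and I would impose the central symmetry $h$ on this presentation. For $\omoduli[g,2](2g,-1,-1)$ the prescribed data is incompatible with such a gluing: the unique $h$-fixed zero of order $2g$ carries $2g+1$ vertical prongs in each direction, and I would show that no $h$-symmetric pairing of these prongs closes up into a surface carrying the two $h$-conjugate simple-pole cylinders without introducing at least one boundary saddle connection $\gamma$ that is non-vertical and lies outside the vertical cylinders. By Proposition~\ref{prop:nonTeichCrit} such a surface cannot generate a Teichm\"uller curve, so $\omoduli[g,2](2g,-1,-1)$ admits none.

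For $\omoduli[g,2](g,g,-1,-1)$ the two $h$-conjugate zeros do admit a consistent fully vertical presentation, and here I would instead extract structure: a surface all of whose boundary saddle connections are vertical decomposes completely into vertical cylinders, and the commensurability of their moduli forced by the single residue scale exhibits $(X,\omega)$ as a degree-$d$ cover of a genus-zero differential in $\omoduli[0](0,0,-1,-1)$, fully ramified over the two poles and with a single ramification point of order $g+1$ over each of the two order-zero points. This is exactly case~(i b) of Proposition~\ref{prop:obviousTeich}, and Riemann--Hurwitz yields $g(X)=g$ together with $g+1\le d$. The cylinder-free alternative I would dispose of by a dimension count: a two-dimensional linear manifold cannot consist entirely of cylinder-free surfaces, since deforming transversally to the $\GL_2^+(\bR)$-orbit and rotating a periodic direction to the vertical either produces a finite vertical cylinder (returning us to the cases above) or forces all saddle connections to stay parallel, collapsing $M$ to a one-dimensional $\bC^*$-orbit. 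Finally, for the converse the covers in case~(i b) are \'etale over the two-dimensional base stratum $\omoduli[0](0,0,-1,-1)$, hence are two-dimensional algebraic linear manifolds inside the hyperelliptic component, so each irreducible component of $\Hur(d,(g,g,-1,-1))$ with $d\ge g+1$ is indeed a Teichm\"uller curve.

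The hard part will be the combinatorial classification of $h$-symmetric vertically presentable surfaces. One must show, once all boundary saddle connections are required to be vertical, that the gluing pattern compatible with a single order-$2g$ zero is genuinely obstructed—not merely non-generic—while the pattern compatible with two order-$g$ zeros is forced to be the cyclic-type covering of case~(i b). Making the obstruction in $\omoduli[g,2](2g,-1,-1)$ unavoidable is precisely where the residue scale set by the simple poles, the $h$-action on the $2g+1$ vertical prongs at the fixed zero, and the bookkeeping of cone angles must be combined.
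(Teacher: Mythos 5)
There is a genuine gap: both of your key steps are asserted rather than proved, and the first one is false as stated. In $\omoduli[g,2](2g,-1,-1)$ there is \emph{no} combinatorial obstruction to an $h$-symmetric fully vertical presentation: taking all $k=2g+1$ boundary periods purely imaginary (so that the residue is vertical as well) produces genuine flat surfaces in the stratum --- exactly the surfaces with all saddle connections parallel noted by Tahar --- which carry vertical cylinders and no non-vertical saddle connection, so Proposition~\ref{prop:nonTeichCrit} cannot be run against them surface by surface, and your claim that every such gluing is forced to contain a non-vertical boundary saddle connection $\gamma$ cannot hold. The actual obstruction in the paper lives at the level of the linear manifold $M$, not of a single surface: normalizing the residue to~$1$, one shows that $\Per(v_i)+\Per(v_{i+1})\in\bR$ must hold \emph{identically on $M$} for every consecutive pair --- convex pairs give cylinders on $(X,\omega)$ itself, while concave pairs give cylinders on the complex-conjugated surface, which lies in $M$ by Lemma~\ref{lem:conjugation}, a path in $M$ that leaves the $\GL_2^+(\bR)$-orbit. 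Your proposal never uses any path beyond the $\GL_2^+(\bR)$-action, which is structurally insufficient here. With $k=2g+1$ odd, the cyclic system of real conditions forces all $\Per(v_i)\in\bR$, which is incompatible with $(X,\omega)$ generating a two-dimensional linear manifold; that parity argument, not a gluing obstruction, is what kills $\omoduli[g,2](2g,-1,-1)$.

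For $\omoduli[g,2](g,g,-1,-1)$ the phrase ``commensurability of their moduli forced by the single residue scale'' is precisely the missing argument, and it is not a consequence of fixing the residue: surfaces satisfying all the real-sum conditions but with incommensurable waist periods exist in the stratum --- they simply fail to generate \Teichmuller curves, and nothing in your proposal detects this. The paper rules them out dynamically: it applies the parabolic twist path $\psi(t)$, which shifts $\Re(\Per(v_{2j+1}))$ modulo the waist periods $\Per(v_{2j+1}+v_{2j+2})$ and stays inside $M$ because $M$ is $\bR$-linear; closedness of $M$ then forces each waist period to be rational relative to the residue, and these rationality conditions are exactly what cuts out the components of $\Hur(d,(g,g,-1,-1))$ in case~(i~b) of Proposition~\ref{prop:obviousTeich}. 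Without some such closedness-under-twisting argument your passage from ``fully vertical'' to ``Hurwitz cover'' does not go through. Similarly, your disposal of the cylinder-free alternative by a ``dimension count'' names no mechanism; in the paper cylinders are produced constructively, from convexity of a consecutive pair $v_i+v_{i+1}$ on the pole boundary or, failing that, of the conjugated pair via Lemma~\ref{lem:conjugation}, before Proposition~\ref{prop:BDGcrit} is invoked. In short, the two places you flag as ``the hard part'' are the proof, and the tools needed there (conjugation and twist paths inside $M$ but outside the orbit, plus the odd/even parity of $k$) are absent from your outline.
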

%\scott{What are the irreducible components of $\widetilde{M}_d(g,g,-1^2)$?}
\par
Before proving this proposition, we provide a simple, but useful lemma concerning the characterization of linear manifolds in a Boissy presentation. This relates two surfaces that in general will not lie in the same $\GL_2^+(\bR)$-orbit but must lie in the same linear manifold.
\par
\begin{lemma}\label{lem:conjugation}
If a surface in a Boissy presentation with boundary saddle connections~$v_i$ and
given by their periods $(\Per(v_1),\dots,\Per(v_k))$  lies in a linear manifold~$M$,
then also the surface obtained in this presentation by complex conjugation of the
periods $(\overline{\Per(v_1)},\dots, \overline{\Per(v_k)})$ lies in~$M$.
\end{lemma}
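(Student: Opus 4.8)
The plan is to show that the two surfaces — the original one and its "complex conjugate" obtained by conjugating all periods — lie in the same linear manifold $M$, using the fact that $M$ is locally cut out by $\bR$-linear equations in the period coordinates. The key observation is that complex conjugation of periods is induced by an orientation-reversing geometric operation on the flat surface, and I want to realize this operation so that the resulting surface stays in $M$. Concretely, conjugating every period $\Per(v_i) \mapsto \overline{\Per(v_i)}$ is the effect of reflecting the flat surface across the horizontal direction, i.e.\ applying the matrix $\operatorname{diag}(1,-1) \in \GL_2(\bR)$, which negates the imaginary parts of all periods while fixing the real parts.

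The first step is to make precise that, in the generalized Boissy presentation, the map sending a collection of periods $(\Per(v_1),\dots,\Per(v_k))$ to the surface built from the corresponding basic domains is compatible with the action of $\operatorname{diag}(1,-1)$, and that this reflected surface is again presentable in the same combinatorial Boissy data (same gluing pattern of basic domains and separatrices). This requires checking that reflecting across the horizontal axis does not change the combinatorial type of the decomposition: the half-planes remain half-planes, the infinite cylinders remain infinite cylinders with non-vertical core curves, and the side-pairing data is preserved. The orientation convention in our variant (that vertical saddle connections have positive imaginary part) must be revisited, but since the surface here is a genuine Boissy presentation without vertical saddle connections the convention is unaffected.

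The second step is the linear-algebra heart of the argument. Since $M$ is defined locally by $\bR$-linear equations $\sum_j a_{ij}\Per(v_j) = 0$ with \emph{real} coefficients $a_{ij}$, and since such an equation holds if and only if both its real and imaginary parts vanish, the equation $\sum_j a_{ij}\Per(v_j)=0$ is equivalent to $\sum_j a_{ij}\overline{\Per(v_j)}=0$. Hence the conjugated period vector satisfies exactly the same defining linear equations as the original, so it lies in the local linear subspace defining $M$. Combining this with the geometric realization from the first step — that the conjugated periods genuinely correspond to a point of the stratum obtained by the reflection $\operatorname{diag}(1,-1)$ — shows the conjugate surface lies in $M$.

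The main obstacle I anticipate is the bookkeeping in the first step: making sure that the reflected surface is presented by the \emph{same} abstract Boissy combinatorial data, so that its period coordinates really are the complex conjugates of the original ones in matching coordinates, rather than in some relabeled or re-cut presentation. In particular one must verify that the reflection does not force a different choice of horizontal separatrices or a reordering of the basic domains, which would permute the $v_i$ and obscure the clean identity $\Per(v_i)\mapsto\overline{\Per(v_i)}$. Once this geometric identification is pinned down, the reality of the defining equations makes the conclusion immediate, and I would phrase the proof so that the reflection $\operatorname{diag}(1,-1)$ provides the conceptual reason while the $\bR$-linearity provides the formal justification.
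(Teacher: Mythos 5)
Your second step is where the argument breaks. From ``the conjugated period vector satisfies exactly the same defining $\bR$-linear equations'' you conclude that the conjugate surface lies in $M$, but a linear manifold is only \emph{locally} cut out by those equations, in a neighborhood of the original point, while the conjugated period vector is in general far away in the Boissy chart. Membership in the zero set of the local equations at a distant point does not imply membership in $M$: inside a single coordinate cell, $M$ may meet the chart in many parallel linear slices. This is not hypothetical in this very paper --- the Hurwitz-space \Teichmuller curves are locally described by conditions such as $\Per(v_{2j+1}+v_{2j+2})\in\bQ$ (see the proof of Proposition~\ref{prop:HYPtwosimplepoles}), and the appendix manifold $\mathcal{T}$ is cut out by $u-v\in\bZ\setminus\{0\}$ --- so a far-away period vector ``satisfying the same equations'' could sit on a different slice, or on none at all. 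Relatedly, the reflection $\diag(1,-1)$ you invoke is orientation-reversing, hence not in $\GL_2^+(\bR)$, and there is no a priori invariance of $M$ under it to appeal to: that conjugation preserves $M$ is exactly the content of the lemma and cannot be imported as a known symmetry.

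The paper closes this gap with a connectedness argument that your proposal lacks: it joins the two presentations by the real segment $\Per_t(v_i)=\Per(v_i)-2t\,\i\,\Im(\Per(v_i))$ for $t\in[0,1]$ (this is \eqref{eq:pullthrough}, with the factor $\i$ understood), whose endpoint at $t=1$ is the conjugate. The segment stays in the stratum with the \emph{same} Boissy combinatorics, since only the imaginary parts are rescaled by $1-2t$ and no boundary saddle connection of a Boissy presentation is vertical; and along it any equation $\sum_j a_j\Per(v_j)=0$ with $a_j\in\bR$ that holds at one point with $t\neq 1/2$ holds identically in $t$, because taking real and imaginary parts splits it into two real conditions, each preserved by the rescaling. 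An open--closed argument (using that $M$ is closed) then keeps the entire path, and in particular its endpoint, inside $M$. Your first step --- realizing the conjugate as an honest point of the stratum with matching combinatorial data --- is correct and corresponds to the $t=1$ endpoint of this path, but the obstacle you flagged (relabeling of the presentation) is the minor one; the essential missing idea is the path.
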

\begin{proof}
The real path~$\phi(t) = (X_t,\omega_t)$ for $t\in[0,1]$  in the strata between the
two surfaces given by Boissy presentations with the same combinatorics and
\be \label{eq:pullthrough}
\Per_t(v_i) := \int_{v_i} \omega_t = v_i - 2t\Im(\Per({v}_i))
\ee
will satisfy all $\bR$-linear equations satisfied by $\phi(0)$.
\end{proof}
\par
\begin{proof}[Proof of Proposition~\ref{prop:HYPtwosimplepoles}]
Let $(X,\omega)$ be a meromorphic surface generating a \Teichmuller curve $M$.  As $(X,\omega)$ will be Boissy decomposable in some direction we can act by $\GL_2^+(\bR)$ to obtain a surface in $M$ that is decomposable in the horizontal direction, where the residue core curves are also horizontal and have length one. 
\par
That is, we can assume that $(X,\omega)$ is given by a Boissy presentation in
the horizontal direction where one of the infinite cylindrical domains has
coordinates $(v_1,\dots,v_k)$ labeled  from left to right and with
$\sum_{i=1}^k \Per(v_i)=1$.  Furthermore, the hyperelliptic involution then
necessitates that on the other domain the saddle connections are labeled
$(v_k,\dots,v_1)$ from left to right, as in Figure~\ref{cap:simplepole}.
%%%%%%%%%%%%%%%%%%%%%
\definecolor{color1}{rgb}{0.83, 0.83, 0.83} %lightgray
\begin{figure}[htb]
	\centering
\begin{tikzpicture} [scale=.8]
\def\y{0.9} 
\begin{scope}
  \path[draw,fill=color1]  (0,3) coordinate (P0)
  -- ++ (0,-3) coordinate (P1)  node[pos =0.5,rotate=90] {\tiny$\mathrm{I}$}
-- ++ (1,1) coordinate (P2) node[below, pos =0.6] {$v_1$}
-- ++ (1,0.5) coordinate (P3) node[below,pos=0.5] {$v_2$}
-- ++ (1,-2) coordinate (P4)	node[below left,pos=0.5] {$v_3$}
-- ++ (1.5,0.5) coordinate (P5)	node[below,pos=0.5] {$v_4$}	
-- ++ (0,3) coordinate (P6)  node[pos =0.5,rotate=90] {\tiny$\mathrm{I}$}	;
% Punkte Fuellung/Beschriftung
\foreach \x in {1,2,3,4,5} 
    \fill[] (P\x) circle (2pt);
%\foreach \x in {2,11} 
%	\draw[] (P\x) node[] {$I$};
%\foreach \x in {6,9} 
%	\draw[] (P\x) node[] {$II$};
\end{scope}
\,
\qquad
\begin{scope}[xshift=7.5cm]
  \path[draw,fill=color1]  (4.5,-1) coordinate (P0)
  -- ++ (0,3) coordinate (P1)  node[pos =0.5,rotate=90] {\tiny$\mathrm{II}$}
-- ++ (-1,-1) coordinate (P2) node[above, pos =0.6] {$v_1$}
-- ++ (-1,-0.5) coordinate (P3) node[above,pos=0.5] {$v_2$}
-- ++ (-1,2) coordinate (P4)	node[above right,pos=0.5] {$v_3$}
-- ++ (-1.5,-0.5) coordinate (P5)	node[above,pos=0.5] {$v_4$}	
-- ++ (0,-3) coordinate (P6) node[pos =0.5,rotate=90] {\tiny$\mathrm{II}$};
  \foreach \x in {1,2,3,4,5} 
    \fill[] (P\x) circle (2pt);
\end{scope}
%\draw[red] (-1,0) -- (7,0); %Hilfslinie 
\end{tikzpicture}
\caption{ A hyperelliptic surface with simple poles}
                \label{cap:simplepole}
\end{figure}
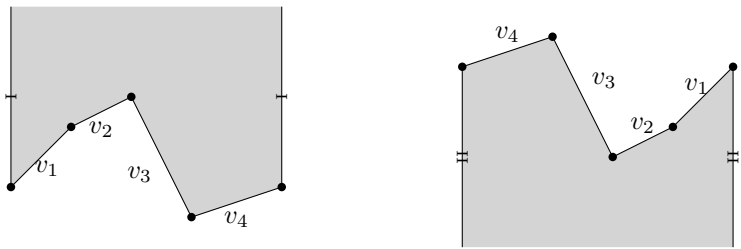
%\end{document}
%%%%%%%%%%%%%%%%%%%%%
\par
If any of the local equations of $M$ are of the form $\Per(v_i)=a\Per(v_{i+1})$
for some $a\in \bR$, that is, if any two consecutive coordinates~$v_i$ and $v_{i+1}$
are required to be parallel, we obtain a contradiction as follows. As not all
coordinates can be parallel, assume after possibly relabeling them, that we have
three consecutive coordinates, $v_1$ and $v_{2}$ are parallel and not parallel to $v_3$. Further, we can assume that $v_{2}+v_{3}$ is convex (if not, consider the conjugate surface via Lemma~\ref{lem:conjugation}). Then the surface contains cylinders with core curves in directions $v_1+2v_{2}+v_{3}$ and $v_{2}+v_{3}$ which are not parallel and hence cannot both be parallel to the residue core curve. Hence $M$ contains a surface with a cylinder in a different direction than the residue core curve and collapsing the core curve of this cylinder relative to the residue core curve direction provides a contradiction to Proposition~\ref{prop:BDGcrit}.
\par
Now as no two consecutive s are parallel we have two possibilities for any two consecutive coordinates. If $v_i+v_{i+1}$ is convex, then $(X,\omega)$ contains a cylinder in this direction. If $v_i+v_{i+1}$ is concave, then the surface $(X',\omega')$ obtained by complex conjugation of the coordinates also lies in $M$ and contains a cylinder in the direction $\overline{v}_i+\overline{v}_{i+1}$. Since this direction must be parallel to the residue core curve that we have normalized to be real we have $v_i+v_{i+1}=a_i$ for some $a_i\in\bR$ for $i=1,\dots,k-1$, and $v_k+v_1=a_k$ for some $a_k\in\bR$. 
This completes the proof that there are no \Teichmuller curves in the hyperelliptic component of $\omoduli[g,2](2g,-1,-1)$ as in this case $k=2g+1$ is odd and the above necessitates the contradiction that all $v_i\in\bR$ which cannot hold for a surface that generates a \Teichmuller curve.
\par
It remains to consider the hyperelliptic component of $\omoduli[g,2](g,g,-1,-1)$ for $g\geq 1$. In this case $k=2g+2$ and by Lemma~\ref{lem:conjugation} we can assume, without loss of generality, that $v_1+v_2$ is convex. Then as $v_i+v_{i+1}$ is real for all $i$, necessarily $v_{2j+1}+v_{2j+2}$ for $j=0,\dots, g$ are also convex and we have $\sum_{j=0}^{g}a_{2j+1}=1$. Further, after rescaling we may assume $\Im(v_i)=(-1)^i$. 

Let $\psi(t)$ be the action of the parabolic matrix with $t\in\bR$ which fixes the imaginary part of all coordinates and acts on the real part of the coordinate $v_{2j+1}$ for $j=0,\dots,g$ as
$$\psi(t)(\Re(\Per(v_{2j+1}))) \=\Re(\Per(v_{2j+1}))+ t
\qquad (\mod \Per(v_{2j+1}+v_{2j+2})).$$
However, $\sum_{i=1}^k \Per(v_i)=1$ and $M$ is closed, hence $\Per(v_{2j+1}+v_{2j+2})$ is
rational. The same argument after conjugation of the surface holds for $v_{2j}+v_{2j+1}$ for $j=1,\dots,g$ and also for $v_{2g+2}+v_1$. Such conditions precisely describe
the linear manifolds given by the Hurwitz space  $\Hur(d,(g,g,-1,-1))$.
\end{proof}

%%%%%%%%%%%%%%
\subsection{Hyperelliptic components of meromorphic strata with higher
order poles.} \label{sec:higherpole}
%%%%%%%%%%%%%%

Throughout this section we work in a hyperelliptic component
of some signature $\mu$, i.e.\  with one zero or two zeros of equal order
and one pole or two poles of equal order, and suppose moreover that
the pole order is at least two. 
\par
\begin{prop} \label{prop:hypgivescyl}
Every \Teichmuller curve in a meromorphic hyperelliptic stratum contains a flat surface with a cylinder. 
\end{prop}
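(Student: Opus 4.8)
The plan is to produce the cylinder directly from a generating surface, using the hyperelliptic symmetry to convert a corner of the Boissy presentation into a cylinder and using complex conjugation to dispose of the opposite sign of corner. Let $(X,\omega)$ be a meromorphic Veech surface generating~$M$. If it already carries a cylinder we are done, so assume it is cylinder-free. After acting by $\GL_2^+(\bR)$ I may assume $(X,\omega)$ is Boissy-decomposable in the horizontal direction. Since the pole order is at least two there are no simple poles, so the generalized Boissy presentation of Proposition~\ref{prop:variantBoissy} contains no infinite cylinders: the surface is assembled from half-planes glued along closed broken lines of boundary saddle connections $v_1,\dots,v_k$, carrying the hyperelliptic symmetry exactly as in the simple-pole picture of Proposition~\ref{prop:HYPtwosimplepoles}, but with half-planes in place of the residue cylinders.

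The geometric heart of the argument is the same local observation used in Proposition~\ref{prop:HYPtwosimplepoles}: at a strictly convex corner between two consecutive boundary saddle connections $v_i,v_{i+1}$, the shortcut saddle connection in the direction $\Per(v_i)+\Per(v_{i+1})$, together with its image under the hyperelliptic involution, bounds a cylinder, which has finite area precisely because there are no simple poles. Consequently a cylinder-free surface can have no convex corner, so every consecutive pair $v_i,v_{i+1}$ that is not parallel must be concave.

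Next I would rule out that all the $v_i$ are parallel. If they were, the core $\cC(X)$, which is the convex hull of the saddle connections and is bounded by the broken lines, would have empty interior and hence be one-dimensional, forcing all saddle connections of $(X,\omega)$ to be parallel. By Tahar's result quoted in Section~\ref{sec:charteich} this makes the $\GL_2^+(\bR)$-orbit of $(X,\omega)$ the one-dimensional $\bC^*$-orbit, contradicting that this orbit is dense in the two-dimensional manifold~$M$. Therefore some consecutive pair $v_i,v_{i+1}$ is non-parallel, hence concave, and by Lemma~\ref{lem:conjugation} the complex-conjugate surface $\overline{(X,\omega)}$ again lies in~$M$. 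Complex conjugation reflects all periods across the horizontal axis and so turns this concave corner into a convex one; by the local observation the conjugate surface then contains a cylinder, and it lies in~$M$, as required.

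The step I expect to be the main obstacle is the local observation of the second paragraph: verifying that a strictly convex corner of the broken line forces a genuine, finite-area, embedded cylinder in the \emph{half-plane} presentation attached to higher-order poles, rather than in the infinite-cylinder presentation of the simple-pole case. One has to check that the shortcut saddle connection stays inside the core and that the hyperelliptic involution glues it to its image along an embedded flat cylinder, the absence of simple poles being what guarantees finiteness of area. A secondary point to make precise is the convex/concave/parallel trichotomy for consecutive boundary saddle connections, together with the fact that conjugation interchanges the first two cases, which is exactly the behaviour already exploited in Proposition~\ref{prop:HYPtwosimplepoles}.
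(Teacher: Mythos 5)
Your local observation is exactly the paper's Lemma~\ref{le:pullthrough1}: a convex corner between two consecutive boundary saddle connections, together with its hyperelliptic image, yields a cylinder, and Lemma~\ref{lem:conjugation} converts a concave corner into a convex one on another surface of~$M$. But your case analysis rests on a false dichotomy. The convex/concave/parallel trichotomy only makes sense for \emph{consecutive} saddle connections, i.e.\ for two saddle connections meeting at a corner of the boundary of a \emph{single} basic domain. The complementary case to ``some domain has a non-parallel consecutive pair'' is therefore not ``all boundary saddle connections of the surface are parallel'': it is entirely possible that each basic domain carries only one boundary saddle connection, or that the boundary saddle connections of each individual domain are mutually parallel, while different domains carry different directions. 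In that configuration there is no corner anywhere, conjugation creates no convex corner, your argument produces no cylinder --- and Tahar's criterion does not apply, since the saddle connections are not globally parallel, so no contradiction arises. Your appeal to the one-dimensionality of the core in the ``all parallel'' case thus only disposes of an extreme subcase, not of the actual complement of the corner case.

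This missing case is precisely why the paper interposes Lemma~\ref{le:coordinatedancing} (``coordinate dancing''): one first shows that some pole~$p$ has two saddle connections with $\bR$-independent homology classes among the boundaries of the Boissy domains \emph{adjacent} to~$p$ (not necessarily on the same domain), using that the boundary periods of a surface generating a two-dimensional linear manifold cannot all be proportional, and then one moves a chosen saddle connection from one half-plane to the next along the dancing path~\eqref{eq:dancingeq} --- a path that stays in~$M$ because $M$ is cut out by $\bR$-linear equations, but leaves the $\GL_2^+(\bR)$-orbit --- until two independent saddle connections sit on the boundary of one and the same basic domain, at which point Lemma~\ref{le:pullthrough1} applies. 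Without this step (or some substitute mechanism for transporting saddle connections between domains) your proof does not go through; note that the paper flags this as the essential subtlety, the useful deformations being exactly those inside~$M$ that exit the orbit closure of the generating surface.
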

\par
As a first step toward this claim we prove:
\par
\begin{lemma} \label{le:pullthrough1}
If $(X,\omega)$ generates a \Teichmuller curve~$M$ and has a Boissy presentation where one basic domain
has two saddle connections on its boundary whose homology classes are
independent in~$M$, then there is a flat surface $(X',\omega') \in M$ with
a cylinder.
\end{lemma}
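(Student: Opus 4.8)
The plan is to deform $(X,\omega)$ inside $M$ until a convex corner appears on the boundary of the given basic domain, and then to invoke the cylinder-from-a-convex-corner mechanism already used in the proof of Proposition~\ref{prop:HYPtwosimplepoles}. Write $v_a,v_b$ for the two boundary saddle connections of the basic domain whose classes are independent in $M$, and let $v_a=v_{i_0},v_{i_0+1},\dots,v_{i_1}=v_b$ be the portion of the broken-line boundary joining them. Because $\dim_{\bC}M=2$ and the classes of $v_a,v_b$ are independent in $M$, the periods $\Per(v_a),\Per(v_b)$ form a local coordinate system on $M$; in particular every other boundary period restricts along $M$ to an $\bR$-linear function of $\Per(v_a)$ and $\Per(v_b)$, and we are free to move inside $M$, keeping the Boissy combinatorics fixed, so as to arrange that $\Per(v_a)$ and $\Per(v_b)$ are not parallel while remaining in the open stratum.

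Once $\Per(v_a)$ and $\Per(v_b)$ are not parallel, the consecutive edges $v_{i_0},v_{i_0+1},\dots,v_{i_1}$ cannot all be pairwise parallel, since that would force $v_a$ parallel to $v_b$. Hence some consecutive pair $v_i,v_{i+1}$ on this boundary forms a strictly convex or strictly concave corner. If the corner is convex, then exactly as in the proof of Proposition~\ref{prop:HYPtwosimplepoles} the hyperelliptic involution pairs this corner with its image and the surface carries a cylinder with core curve in the direction $\Per(v_i)+\Per(v_{i+1})$, so we take $(X',\omega')$ to be this surface. If instead the corner is concave, we replace the surface by its complex conjugate: by Lemma~\ref{lem:conjugation} this conjugate lies in $M$ and has the same combinatorics, while conjugation reverses the orientation of the broken line and turns the concave corner into a convex one, to which the same mechanism applies.

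The step I expect to be the main obstacle is the passage from a convex corner to an honest cylinder in the present generality. In Proposition~\ref{prop:HYPtwosimplepoles} the basic domains were the symmetric infinite cylinders attached to simple poles, where the closed geodesic in the direction $\Per(v_i)+\Per(v_{i+1})$ visibly closes up; for the half-plane basic domains attached to higher-order poles one must check that the geodesic through a convex corner and its involution image returns to its starting point without meeting a singularity, and that it sweeps out a genuine cylinder rather than a single saddle connection. Controlling this, together with verifying that the deformation of the first step neither collapses a boundary saddle connection nor creates a vertical saddle connection outside a cylinder (which would alter the generalized Boissy presentation), is the technical heart of the argument; the remaining bookkeeping with the hyperelliptic involution and with membership in $M$ is then routine.
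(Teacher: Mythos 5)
Your argument is essentially the paper's own proof: the paper likewise looks for a consecutive pair $v_i,v_{i+1}$ on the boundary forming two sides of a triangle in the core (a convex corner), passes to the complex conjugate surface via Lemma~\ref{lem:conjugation} when the boundary path is concave, and concludes that this triangle together with its hyperelliptic image yields the cylinder. The one step you flag as the main obstacle is treated as immediate in the paper---the triangle and its involution image glue along $v_i$ and $v_{i+1}$ into a parallelogram with a pair of opposite sides identified, i.e.\ a simple cylinder with core direction $\Per(v_i)+\Per(v_{i+1})$, exactly the mechanism already invoked in Proposition~\ref{prop:HYPtwosimplepoles}, so no geodesic-closing verification is needed---while your preliminary deformation inside $M$ to make the two periods non-parallel is an extra (and harmless) precaution: the paper's proof silently reads the hypothesis ``independent in $M$'' as ``independent directions at the given surface,'' a degenerate parallel-period case your perturbation legitimately rules out.
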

\par
\begin{proof}
Consider the basic domain in the Boissy presentation of $(X,\omega)$ that contains two saddle connections on its boundary whose homology classes are independent in~$M$. 
Hyperellipticity implies that if $v_1,\ldots,v_k$ for $k\geq 2$ are the saddle connections
at the boundary of this half-plane labeled from left to right, then
$v_k,\ldots,v_1$ are the saddle connections at the boundary of  an oppositely oriented half-plane in the Boissy presentation. If the saddle connections $v_i$ and $v_{i+1}$ for some $i=1,\dots,k-1$ form two sides of
a triangle~$\Delta = \Delta(ABC)$ in the core~$\cC(X)$, then this triangle
and its hyperelliptic image gives a cylinder.
\par
If there is no choice of $v_i$ and $v_{i+1}$ that form two sides of a triangle (i.e. the path $v_1,...,v_k$ is concave) then Lemma~\ref{lem:conjugation} gives that the surface obtained by conjugating all periods also lies in $M$. This surface and the assumption that the original domain contained saddle connections with independent directions ensures that some $\bar{v}_i$ and $\bar{v}_{i+1}$ for $i=1,\dots,k-1$ form two sides of a triangle in the core of this conjugated surface. This triangle and its hyperelliptic image give a cylinder.   
\end{proof}
\par
\paragraph{\textbf{Coordinate dancing}} The following argument
to move around the saddle connections on the boundary of a Boissy presentation
also relies on a path within the \Teichmuller curve, but leaving
the $\GL_2(\bR)$-orbit of the initial flat surface.
\par
Pick some saddle connection~$v_k$ on the boundary of some
Boissy basic domain and let~$p$ be the pole corresponding to the (exterior
of this) domain. We write $\Per(v_j) = x_j + i y_j$. Using the action
of $\GL_2(\bR)$
we may arrange that $v_k$ is horizontal, i.e. $\Per(v_k) = x_k$ with $0<x_k<1$.
We now stretch the vertical direction so that $|y_j|>x_j$ for all~$j$ such that
$v_j$ is not horizontal (i.e.\ not parallel to~$v_k$).  We let~$V$ be this
(non-empty) set of indices of ('rather vertical') saddle connections. Consider
now the path that replaces~$\Per(v_k)$ by $\Per_t(v_k) = v_k \exp(-2\pi i t)$
and more generally makes
\ba \label{eq:dancingeq}
\Per_t(v_j):= \int_{v_j} \omega(t) &\= x_j\exp(-2\pi i t) + i y_j \\
&\= x_j \cos(-2\pi i t) + i(x_j \sin(-2\pi i t) +y_j)\,.
\ea
We refer to the path as \emph{coordinate dancing}.  This path stays inside~$M$
since~$M$ is cut out by $\bR$-linear equations. In fact we only have to verify
that those linear equations hold for the real and imaginary parts along the path.
For this we observe that the real part is rescaled by a common factor, while
the imaginary part is the sum of the initial imaginary part and the 
real part rescaled by a common factor, and for those summands the $\bR$-linear
equations hold individually.
\par
The initial phase of the dance, shrinking and rotating till $t=1/4$,
is illustrated in the passage to the second line in Figure~\ref{cap:CoordDancing}
using the saddle connection~$k=2$.
\par
As illustrated in the passage between the third and fourth line of
Figure~\ref{cap:CoordDancing}, once $v_k$ passes the vertical line as we
follow the dancing path, we need to cut and reglue
along~$v_k$ (and simultaneously along all boundary saddle connections
that are parallel to~$v_1$) in order to maintain a Boissy
presentation, resulting in the saddle connection~$v_k'$. (Actually the
fourth row in Figure~\ref{cap:CoordDancing} is not quite a Boissy presentation,
since some of the the saddle connections in~$V$ are 'tilted over'.)
\par
Now two cases may happen. First, none of the saddle connection parallel to~$v_k$
touches any of the rather vertical saddle connections, as shown in the figure.
(In this case the cut and reglue for the 'tilted over' saddle connections in
the set~$V$ will be undone in the next step anyway and so we kept their
position for simplicity.) We thus continue along the dancing path and arrive
at the point where $v_k'$ passes the vertical line (fifth row in
Figure~\ref{cap:CoordDancing}). This requires another cut and reglue  
resulting in the saddle connection~$v_k''$.
\par
%%%%%%%%%%%%%%%%%%%%%
\input{pic_nicer_dancing}
%%%%%%%%%%%%%%%%%%%%%
\clearpage
To summarize the \emph{first case} (sixth row in Figure~\ref{cap:CoordDancing}):
for $t=1$ all the boundary saddle connections in~$V$ are back to their initial
position, while $v_k$ (and all those parallel to it) have \emph{danced to
the half-plane at angle~$2\pi$} in counterclockwise direction from their initial
position.
\par
In the \emph{second case} some connection parallel to~$v_k$ \emph{touches
in the moment of tilt over} a saddle connection in~$V$. In this case we
have to reglue the tilted over saddle connection to arrive at a Boissy
presentation. The dancing path stops here and the sequel depends on the context.
\par
\begin{lemma} \label{le:coordinatedancing}
Every \Teichmuller curve contains a surface that satisfies the hypothesis of Lemma~\ref{le:pullthrough1}
%If $(X,\omega)$ as above has a Boissy presentation that does not meet
%the hypothesis of Lemma~\ref{le:pullthrough1}, then there is a surface
%$(X',\omega') \in M$ with a Boissy presentation that \emph{does} meet
%the hypothesis of Lemma~\ref{le:pullthrough1}.
\end{lemma}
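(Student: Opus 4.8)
The plan is to argue directly, producing the required surface from an arbitrary one by following the coordinate dancing path. First I would fix any $(X,\omega)\in M$ together with a Boissy presentation. If some basic domain already carries two boundary saddle connections $v,w$ that are \emph{not} parallel on $(X,\omega)$, then no relation $\Per(v)=a\,\Per(w)$ with $a\in\bR$ can hold throughout $M$ (it would force $v\parallel w$ at the given surface), so $[v]$ and $[w]$ are independent in $M$ and we are in the situation of Lemma~\ref{le:pullthrough1}. Thus the only case to treat is when, within every basic domain of $(X,\omega)$, all boundary saddle connections are parallel.

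Next I would observe that $(X,\omega)$ cannot have \emph{all} of its saddle connections parallel. By Tahar's characterization recalled in Section~\ref{sec:charteich} this would make the $\GL_2^+(\bR)$-orbit closed and only one-dimensional in period coordinates, whereas a surface generating $M$ has orbit open in the two-dimensional manifold $M$. Choosing a boundary saddle connection $v_k$ and normalizing by $\GL_2(\bR)$ so that $v_k$ is horizontal, the set $V$ of non-horizontal boundary saddle connections introduced before \eqref{eq:dancingeq} is therefore non-empty. Under the standing assumption the domain bounded by $v_k$ has all of its boundary coordinates horizontal, so every index of $V$ lives on a basic domain different from that of $v_k$.

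I would then run the coordinate dancing path, rotating $\Per(v_k)$ through a full angle $2\pi$ as in \eqref{eq:dancingeq}; the path stays in $M$ because the $\bR$-linear equations are respected on real and imaginary parts separately. The outcome is governed by the two cases isolated after Figure~\ref{cap:CoordDancing}. In the \emph{first case} the dance completes and $v_k$, again horizontal, has been transported into the half-plane at angle $2\pi$, whose remaining boundary coordinates lie in $V$; this basic domain then carries the horizontal class $v_k$ together with a non-horizontal one, hence two non-parallel and so independent classes. In the \emph{second case} the dance halts at the moment a connection parallel to $v_k$ tilts over and meets a connection of $V$; after the regluing needed to restore a Boissy presentation these two non-parallel connections bound a common basic domain. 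Either way the resulting surface lies in $M$ and meets the hypothesis of Lemma~\ref{le:pullthrough1}.

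The hard part will be the combinatorial bookkeeping behind these two cases: one must verify that after the $2\pi$ dance the half-plane receiving $v_k$ is genuinely bounded by a connection from $V$ (respectively that the tilt-over collision places a $v_k$-parallel and a $V$-connection on the boundary of a single domain), tracking the successive cut-and-reglue operations through the presentation. This is precisely the content encoded in Figure~\ref{cap:CoordDancing}, and establishing it for an arbitrary hyperelliptic presentation rather than the single worked example $\omoduli[2,2](8,-6)$ is the step that requires care. Once it is in place the independence in $M$ is automatic, since non-parallelism on one surface already precludes any forced proportionality, exactly as in the first paragraph.
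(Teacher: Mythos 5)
Your overall strategy---reduce to the case where each basic domain carries only mutually parallel boundary connections, then dance a chosen coordinate until it shares a domain with a non-parallel one---follows the paper's, and your opening reduction is correct: non-parallelism of $v,w$ at a single surface of $M$ rules out any relation $\Per(v)=a\,\Per(w)$ with $a\in\bR$ holding on $M$, hence gives independence in $M$ as required by Lemma~\ref{le:pullthrough1}. The genuine gap is in the termination of the dance, and it is not the ``combinatorial bookkeeping'' you defer at the end but a missing idea. The dancing saddle connection circulates only among the half-planes attached to the single pole $p$ corresponding to the exterior of its domain, and the half-plane at angle $2\pi$ may carry no boundary saddle connection at all; in general one must iterate through angles $m\cdot 2\pi$, $m\in\bN$, and even then, if $v_k$ happens to be the only boundary connection among the domains adjacent to $p$, no number of repeated dances ever brings it onto a domain with a second connection. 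Your set $V$ was defined globally, so Tahar's theorem only yields $V\neq\emptyset$ somewhere on the surface---possibly entirely adjacent to other poles---which is too weak to force either of your two collision scenarios to occur.

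The paper closes exactly this hole before dancing: it first proves a global claim that there exists a pole $p$ such that among the boundary saddle connections of the Boissy domains adjacent to $p$ there are \emph{two} whose homology classes are independent in $M$ (using that the boundary periods span a two-dimensional space since $M$ is a \Teichmuller curve, that each boundary connection is adjacent to at most two poles, and connectivity of the surface), and only then dances a connection adjacent to this $p$, repeating the dance through the half-planes at angles $m\cdot2\pi$ until the guaranteed second connection around $p$ forces the hypothesis of Lemma~\ref{le:pullthrough1}. Your proposal has no analogue of this pole selection, so the eventual collision is unjustified. A secondary point: at the moment of tilt-over the dancing connection is vertical, and a connection of $V$ can itself be vertical, so ``non-parallel at the collision'' is not automatic in your second case either; the paper sidesteps this because independence in $M$ is supplied by the choice of $p$ rather than by pointwise non-parallelism. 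To repair your argument you would need to add the paper's selection of $p$ (or some substitute guaranteeing another $M$-independent connection adjacent to the same pole) and replace the single $2\pi$ dance by the iterated one.
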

\par
\begin{proof} Let $(X,\omega)$ be a flat surface that generates the \Teichmuller curve $M$. We claim that there is a pole~$p$
so that among the saddle connections on the boundary of the Boissy domains
adjacent to~$p$, there are two which are $\bR$-linearly independent in
homology. Consider the global picture: each boundary saddle connection is
adjacent to one or two poles. Moreover since $(X,\omega)$ generates
a \Teichmuller curve, the boundary saddle connections span a two-dimensional
subspace in cohomology. Connectivity of the whole surface
implies that there is some~$p$ as claimed.
\par
Now we start coordinate dancing with one of the saddle connections adjacent
to~$p$. If the dance stops at the moment of tilt over, we have arrived
at a situation where the hypothesis of Lemma~\ref{le:pullthrough1} is met.
\par
Otherwise, repeating the coordinate dance with the same saddle connection, 
this saddle connection successively visits all half-plane at angle~$m\cdot 2\pi$
for $m \in \bN$ from its initial position until eventually the hypothesis
of Lemma~\ref{le:pullthrough1} is met thanks to the choice of~$p$ and
the presence of another saddle connection.
\end{proof}
\par
The \emph{proof of Proposition~\ref{prop:hypgivescyl}} is an immediate
consequence of Lemma~\ref{le:coordinatedancing} and
Lemma~\ref{le:pullthrough1}.
\par
Thanks to Proposition~\ref{prop:hypgivescyl} we assume from now on
that the surface $(X,\omega)$ that generates the \Teichmuller curve has a cylinder, rotated so that 
its core curve is vertical, in short a \emph{vertical cylinder}.
We work with the generalized Boissy presentation of this surface.
The geometric criterion Proposition~\ref{prop:nonTeichCrit} immediately
implies that if $(X,\omega)$ with a vertical cylinder generates
a \Teichmuller curve, then all other saddle connections on the boundary
of the basic domains in a generalized Boissy presentation are vertical.
We refer such a presentation as a \emph{pure vertical presentation} and
assume this from now on. 
\par
We moreover distinguish the saddle connections on the boundaries of
the building blocks.
\emph{External saddle connections } are the (vertical) saddle connections
on the boundary of the infinite half-planes. \emph{Internal saddle
connections} are the remaining vertical saddle connections. They are
internal to subsurfaces made out of cylinders (with vertical core curves)
only.
\par
Since the hyperelliptic involution acts on the surface, mapping Boissy's
building blocks into building blocks, it also acts on the set of
external and internal saddle connections. These saddle connections are
thus either \emph{invariant} under the action of the involution, or
exchanged in pairs.
\par
\begin{lemma} \label{le:noinvint}
There are no invariant internal saddle connections.
\end{lemma}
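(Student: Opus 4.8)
The plan is to show that an \(h\)-invariant internal saddle connection is incompatible with the non-existence criterion, after first pinning down its local geometry. Since \(\omega\) is of hyperelliptic type we have \(h^*\omega=-\omega\), so in flat coordinates \(h\) acts near any fixed point as the rotation by \(\pi\). If a saddle connection \(\gamma\) satisfies \(h(\gamma)=\gamma\), then \(h|_\gamma\) is an involution of an arc; it cannot be the identity (interior points of \(\gamma\) are regular), so it is a point reflection, whence \(h\) reverses \(\gamma\), interchanges its two endpoints, and fixes exactly its midpoint \(W\), which is therefore a Weierstrass point. First I would record this. Because the pole order is at least two, the poles sit at infinite distance inside the half-planes, so every saddle connection—in particular every internal one—joins zeros. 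Thus the endpoints of \(\gamma\) are either the two zeros interchanged by \(h\) (the \(\{m,m\}\) case) or a single fixed zero, in which case \(\gamma\) is a loop; in both cases \(W\) is a \emph{regular} Weierstrass point.

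Next I would analyse the cylinders meeting \(\gamma\). In a pure vertical presentation \(\gamma\) is vertical, hence parallel to the vertical core curves, so it lies on the boundary of the adjacent basic domains, which are cylinders. The point reflection at \(W\) interchanges the two local sides of \(\gamma\), hence interchanges the two adjacent cylinders: they are either distinct, say \(C_1,C_2\) with \(h(C_1)=C_2\) and of equal height and circumference, or a single cylinder glued to itself and then \(h\)-invariant, a case I would dispose of separately using that \(h\) reverses the vertical direction. In the main case \(h\) carries the core curve \(c_1\) of \(C_1\) to \(c_2=h(c_1)\), two disjoint vertical closed geodesics glued to each other precisely along the invariant coordinate \(\gamma\).

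With this local model in hand, the contradiction should come from Proposition~\ref{prop:nonTeichCrit}: it suffices to exhibit \emph{some} surface in \(M\) that carries a vertical cylinder together with a saddle connection in a non-vertical direction lying outside the vertical cylinders. The plan is to exploit the extra freedom created by the interchanged pair \(C_1,C_2\). Deforming the periods of the boundary coordinates of \(C_1\) and \(C_2\) in the \(h\)-antisymmetric manner permitted by the \(\bR\)-linear equations of \(M\)—combining complex conjugation (Lemma~\ref{lem:conjugation}) with the coordinate-dancing and pulling-through-cylinders constructions of this section—tilts \(\gamma\), or a saddle connection forced to accompany it, off the vertical while preserving one of the remaining vertical cylinders. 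Applying Proposition~\ref{prop:nonTeichCrit} (via Proposition~\ref{prop:BDGcrit}) to the resulting surface in \(M\) then gives the desired contradiction, so no \(h\)-invariant internal saddle connection can occur.

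The main obstacle is this last step. One must certify that the tilted saddle connection genuinely lies \emph{outside} the closures of all vertical cylinders, so that it cannot be absorbed into a vertical cylinder and the criterion really applies, and one must check that the chosen deformation simultaneously respects hyperellipticity and the linear structure of \(M\). Controlling the global combinatorics of how the interchanged pair \(C_1,C_2\) sits among the remaining half-planes and cylinders, and separating the two endpoint cases (two interchanged zeros versus the single fixed zero with \(\gamma\) a loop), is where the genuine work lies; the local reductions of the first two paragraphs are comparatively routine.
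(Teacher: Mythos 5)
Your local analysis of the involution is fine, but the proof has a genuine gap, and your case split is inverted. The ``main case'' of two distinct cylinders $C_1,C_2$ interchanged by $h$ cannot occur in a hyperelliptic \emph{component}: if $h$ swapped two cylinders, one could change the size of one of them alone, staying in the stratum but destroying the involution, contradicting that every surface in the component is hyperelliptic (this is exactly the observation the paper records in the proof of Lemma~\ref{le:existschimney}). So every cylinder is $h$-invariant, and the case you propose to ``dispose of separately'' --- the same cylinder adjacent to $\gamma$ on both sides, i.e.\ $\gamma$ appearing at the two ends of a single vertical cylinder --- is the only case and carries all the content of the lemma.

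Moreover, in that case you never produce the contradiction, and the deformation scheme you sketch for the (vacuous) swapped-pair case is, by your own admission, uncertified: ``tilting $\gamma$ off the vertical while preserving one of the remaining vertical cylinders'' is precisely the kind of claim that requires an explicit path in $M$ respecting its $\bR$-linear equations, and none is given. The paper's argument needs no deformation at all: since the invariant internal saddle connection sits at the two ends of the same vertical cylinder, joining corresponding points of its two copies by straight segments across that cylinder produces closed geodesics, the core curves of a \emph{new} cylinder, and these crossing segments are necessarily non-vertical. Rotating so that this new cylinder direction becomes vertical turns the other boundary saddle connections of the pure vertical presentation into non-vertical saddle connections outside the vertical cylinders, and Proposition~\ref{prop:nonTeichCrit} yields the contradiction immediately. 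Your proposal is missing exactly this crossing-segment construction, which is the one idea the proof requires.
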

\par
\begin{proof}
Any invariant internal coordinate will appear at the two ends of the
same cylinder with vertical core curves. Connecting these saddle connections
by straight lines across the initial cylinder will form the core curves
of another cylinder. Since these crossing lines are not vertical, this
cylinder together with any of the other boundary saddle connections
of the vertical presentation gives the desired contradiction to
Proposition~\ref{prop:nonTeichCrit} (after turning the new direction
into the vertical one).
\end{proof}
\par
We isolate another situation favorable to prove the existence 
of a non-vertical cylinders: A \emph{cleaver} consists of two cylinders
with vertical core curves, a simple cylinder and a cylinder with two or
more saddle connections, with both boundary curves of the simple cylinder
glued to the boundary of the non-simple cylinder as in
Figure~\ref{cap:cleaver}~left. Elementary geometry shows:
\par
\begin{lemma} \label{le:cleaverhasdiag}
  A cleaver contains a cylinder with a non-vertical core curve.
\end{lemma}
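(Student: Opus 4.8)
The plan is to exhibit directly, by elementary flat geometry, a closed geodesic contained in the union of the two cylinders whose holonomy has a nonzero horizontal component, and then to show that it is the core curve of a genuine cylinder. Throughout I work in the pure vertical presentation, so both cylinders have vertical core curves and all their boundary saddle connections are vertical. I write the simple cylinder~$S$ as a vertical strip of positive width~$w>0$ and circumference~$h$, bounded on the left and right by the two vertical saddle connections~$a$ and~$b$ (each of length~$h$), and I record that in the cleaver both~$a$ and~$b$ occur as boundary saddle connections of the non-simple cylinder~$N$, which has width~$w_N$.

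The first substantive step is a bookkeeping of the gluings. Since $S$ is a \emph{simple} cylinder, its interior contains no singularity and each of its two boundaries is a single saddle connection; hence a horizontal transversal of~$S$ crosses it cleanly, contributing exactly the horizontal vector of length~$w$. The two edges~$a,b$ of~$S$ have opposite outward normals, so they are glued to portions of~$\partial N$ with opposite orientation; reading this off Figure~\ref{cap:cleaver}~(left), the horizontal displacement picked up crossing~$S$ and the horizontal displacement picked up traversing~$N$ combine \emph{coherently} rather than cancelling. I would then assemble the closed loop~$\gamma$ that traverses~$N$ once and~$S$ once: starting inside~$N$, run horizontally across~$N$ to the edge glued to~$a$, cross into~$S$, run across~$S$ to~$b$, and let the gluing of~$b$ return the path into~$N$, closing up. By the previous sentence the holonomy of~$\gamma$ has horizontal part of absolute value at least~$w>0$, so $\gamma$ is non-vertical.

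It remains to upgrade~$\gamma$ to the core curve of an embedded cylinder. Here I would let the starting height vary over the (open) range of heights that lie simultaneously in the image of~$a$ and in the image of~$b$ along~$\partial N$; because $a$ and~$b$ are honest saddle connections of positive length~$h$, this range is nonempty, and the corresponding parallel copies of~$\gamma$ sweep out a band~$C$. Since the crossing of the simple cylinder is unobstructed, the only singularities that can bound this band are the cone points at the endpoints of~$a$ and~$b$; these cut~$C$ off above and below, so $C$ is a genuine cylinder whose core curve is parallel to~$\gamma$, hence non-vertical. This produces the desired cylinder and proves the lemma (and, combined with Proposition~\ref{prop:nonTeichCrit} after rotating~$C$ to the vertical direction, feeds into the non-existence argument).

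The step I expect to be the main obstacle is the closing-up in the middle paragraph: one must check that the loop genuinely closes and that the swept band is embedded, which requires matching the twist parameters of~$S$ and of~$N$ and confirming that the common height-range is nonempty. I would handle this by observing that the width of the band is exactly the overlap of the two height-intervals along~$\partial N$ determined by the gluings of~$a$ and~$b$, an interval of positive length because each of~$a,b$ has length~$h$; the positivity of~$w$ then guarantees both the non-verticality of the direction and the positive width of the cylinder, so no degeneration into a singular chain occurs.
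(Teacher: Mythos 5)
Your overall strategy---running a transversal once through the simple cylinder and once through the non-simple one, and then sweeping it out to a maximal cylinder---is the same as the paper's. The paper, however, first shears by a vertical parabolic so that the open ends of the non-simple cylinder face each other, normalizes $S$ to a unit square with twist $0\leq t<1$ and hold length $a$ (Figure~\ref{cap:cleaver}, right), and then exhibits the cylinder in an explicit \emph{non-vertical, non-horizontal} direction whose slope depends on $t$ and $a$. This is exactly the point where your argument has a genuine gap: you fix the direction to be horizontal and assert that the loop $\gamma$ ``closes up,'' but closure fails in general. On a translation surface, the heights at which a chord enters and leaves a cylinder with vertical core curve differ by that cylinder's twist, so the first-return map of the \emph{horizontal} foliation to a vertical transversal of the cleaver is a translation in the height coordinate by the total twist accumulated in crossing $N$ and $S$ (modulo the circumferences). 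For generic twist parameters this translation is nonzero and no horizontal chord chain through $N$ and $S$ is periodic, no matter which starting height you choose: the starting height is not a parameter that can absorb the offset, because the return map is a rigid translation. The one free parameter that can absorb it is the slope $s$ of the direction: the vertical drift along the loop is the total twist plus $s(w+w_N)$, and solving the resulting closure equation for $s$ is precisely the content of the paper's shear-and-slope computation. Since a vertical parabolic shear and any choice of non-vertical slope preserve the vertical/non-vertical dichotomy, the lemma's conclusion is unaffected, but your proof must make this twist-dependent choice of direction rather than assert horizontal closure.

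Your proposed patch in the last paragraph does not repair this. The overlap of the two height-intervals along $\partial N$ controls only whether a \emph{single} chord can pass from the image of $b$ across $N$ into the image of $a$, i.e.\ whether consecutive crossings are geometrically possible; it says nothing about the first-return map being the identity, which is what closure requires. Moreover, even the overlap claim is unjustified as stated: the images of $a$ and $b$ are intervals of length $h$ on the two \emph{different} boundary circles of $N$, each of circumference $c_N>h$ (the boundary of $N$ carries further saddle connections by the definition of a cleaver), and after transporting one interval across $N$ by the holonomy of the horizontal foliation the two intervals can be disjoint---their relative position is exactly the twist datum your argument ignores. The remaining pieces of your write-up are fine and standard: the coherence of the horizontal displacements (so that any closed geodesic crossing both cylinders has horizontal holonomy $w+w_N>0$, hence is non-vertical) and the sweeping of a closed regular geodesic out to a maximal embedded cylinder. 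With ``horizontal'' replaced by the twist-dependent slope, as in the paper's proof, your argument goes through.
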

\par
\begin{proof} Shearing by a vertical parabolic we may assume that the
open ends of the non-simple cylinders are facing each other, at the expense
of having a skew cleaver. By rescaling we assume that~$S$ in
Figure~\ref{cap:cleaver}~right is a unit square, the cleaver is twisted
by~$0\leq t < 1$, and its hold has length~$a$. Now the
direction of slope $\lfloor t \rfloor /(1+a)$ contains a cylinder.
\end{proof}
\par
%%%%%%%%%%%%%%%%%%%%%
\definecolor{color1}{rgb}{0.83, 0.83, 0.83} %lightgray
\begin{figure}[htb]
	\centering
\begin{tikzpicture} [scale=.8]

\begin{scope}
\path[draw,fill=color1] ((0,0) coordinate (P1) 
					-- ++ (15:1cm) coordinate (P2) 
					node[] {$\mathrm{I}$}
					-- ++ (15:1cm) coordinate (P3) 
					-- ++ (90:0.5cm) coordinate (P4)	node[right,pos=0.5] {$v_5$}
 					-- ++ (90:0.7cm) coordinate (P5)	node[right,pos=0.5] {$v_4$}	
   					-- ++ (-22.5:2.25cm) coordinate (P6) node[] {$\mathrm{II}$}	
   					-- ++ (-22.5:2.25cm) coordinate (P7)		
					-- ++ (90:0.8cm) coordinate (P8)	node[right,pos=0.5] {$v_3$}	
					-- ++ (157.5:2.25cm) coordinate (P9) node[] {$\mathrm{II}$}
					-- ++ (157.5:2.25cm) coordinate (P10) 
					-- ++ (195:1cm) coordinate (P11) node[] {$\mathrm{I}$}
					-- ++ (195:1cm) coordinate (P12) 
					-- ++ (270:0.5cm) coordinate (P13) node[left,pos=0.5] {$v_1$}
					-- ++ (270:0.7cm) coordinate (P14) node[left,pos=0.5] {$v_2$}  										
					-- ++ (270:0.8cm) coordinate (P15) node[left,pos=0.5] {$v_3$};
% Punkte Fuellung/Beschriftung
\foreach \x in {1,3,4,5,7,8,10,12,13,14,15} 
    \fill[] (P\x) circle (2pt);
%\foreach \x in {2,11} 
%	\draw[] (P\x) node[] {$I$};
%\foreach \x in {6,9} 
%	\draw[] (P\x) node[] {$II$};
\end{scope}
\,
\qquad
\begin{scope}[xshift=7.5cm]
	\path[draw,fill=color1] (0,0) coordinate (P1) 
	-- ++ (0:1cm) coordinate (P2) node[] {$\mathrm{I}$} node[below,yshift=-2pt] {$1$}
	-- ++ (0:1cm) coordinate (P3) 
	-- ++ (25:3.5cm) coordinate (P4)  	
	-- ++ (90:0.375cm) coordinate (P5) node[right] {$v_3$} %{\rotatebox{90}{$\mathrm{II}$}}	
	-- ++ (90:0.375cm) coordinate (P6)
	-- ++ (205:3.5cm) coordinate (P7) 
	-- ++ (90:0.5cm) coordinate (P8)	node[right,pos=0.95] {$v_5$}	
	-- ++ (90:0.7cm) coordinate (P9)	node[right,pos=0.6] {$v_4$}	
	-- ++ (180:1cm) coordinate (P10) node[] {$\mathrm{I}$}
	-- ++ (180:1cm) coordinate (P11) 
	-- ++ (270:0.5cm) coordinate (P12) node[left,pos=0.5] {$v_1$}
	-- ++ (270:0.7cm) coordinate (P13) node[left,pos=0.5] {$v_2$}	
	-- ++ (270:0.75cm) coordinate (P14) node[left,pos=0.5] {$v_3$}; %\rotatebox{90}{$\mathrm{II}$}}
        %node[left,xshift=-10pt] {$1$}
%	-- ++ (270:0cm) coordinate (P13);
	%
	\path[draw,dotted] (P3) -- ++ (0:3.17cm) coordinate (P14) node[below,pos=0.5] {$a$}
							-- ++ (90:1.5cm) coordinate (P15) node[right,pos=0.5] {$t$}
							(P3) -- (P7) -- (P13);

	% Punkte Fuellung/Beschriftung
	\foreach \x in {1,3,4,6,7,8,9,11,12,13} 
	\fill[] (P\x) circle (2pt);
%	\draw[] (P2) node[] {$I$}; 
%	\draw[] (P9) node[] {$I$}; 
	\path[] (P1) -- (P7) node[pos=0.5] {\Large$S$}; 
%	\foreach \x in {6,9} 
%	\draw[] (P\x) node[] {$II$};
\end{scope}
%\draw[red] (-1,0) -- (7,0); %Hilfslinie 
\end{tikzpicture}
\caption{ A cleaver and its shear that
  exhibits a horizontal cylinder}
                \label{cap:cleaver}
\end{figure}
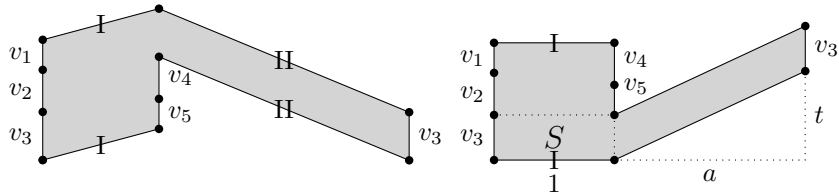
%\end{document}
%%%%%%%%%%%%%%%%%%%%%
%\begin{figure}
%\begin{center}
%\begin{overpic}[width=0.4\textwidth]{Chimney}
%\end{overpic}
%\begin{overpic}[width=0.4\textwidth]{Chimney2}
%\end{overpic}
%%\vspace{0.6cm}
%\caption{A cleaver and its shear that exhibits a horizontal cylinders
%[Change drawing, on the right we should have the same number of saddle
%connections]
%}
%\label{fig:cleaver}
%\end{center}
%\end{figure}
\par
\begin{lemma} \label{le:existschimney}
A surface $(X,\omega)$ that contains a cylinder and generates a \Teichmuller curve, given in 
pure vertical generalized Boissy presentation, consists only of Boissy
basic domains (half-planes) and \emph{simple} cylinders (at least one). The boundary saddle connections are all external, either
invariant or alternatively exchanged on the two ends of a simple cylinder.
\end{lemma}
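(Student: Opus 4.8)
My plan is to study the closed subsurface $\mathcal Y\subseteq X$ obtained as the union of the closures of all vertical cylinders, glued to one another along the internal saddle connections. Since the pole order is assumed to be at least two, no basic domain is an infinite cylinder (those arise only from simple poles), so the complement of $\mathcal Y$ is a nonempty union of half-planes and $\mathcal Y$ is a \emph{proper} subsurface whose boundary consists exactly of external saddle connections. Because $X$ is connected and contains half-planes, no component of $\mathcal Y$ can be boundaryless, so every connected component $W$ of $\mathcal Y$ has nonempty boundary. The entire statement then follows once I show that each such $W$ is a single simple cylinder: this yields simultaneously that there are no internal saddle connections, that all boundary saddle connections are external, and that each simple cylinder contributes exactly two ends.

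The heart of the argument is to rule out, by contradiction, every component $W$ that is not a single simple cylinder, by manufacturing a forbidden non-vertical cylinder. First I would treat components carrying an internal saddle connection $\gamma$. If $\gamma$ occurs on \emph{both} ends of one cylinder $C\subseteq W$, then connecting its two occurrences by straight segments across $C$ gives a cylinder with non-vertical core curve exactly as in the proof of Lemma~\ref{le:noinvint}, and Proposition~\ref{prop:nonTeichCrit} is violated. If instead $\gamma$ separates two distinct cylinders, I would use that the hyperelliptic involution $h$ permutes the vertical cylinders and preserves the internal/external dichotomy, and that it reverses the labelling on the boundary of each basic domain exactly as in Proposition~\ref{prop:HYPtwosimplepoles}; tracing $\gamma$ and $h(\gamma)$ through the cluster, one locates a simple cylinder both of whose ends abut a single neighbouring non-simple cylinder, i.e.\ a cleaver, and then invokes Lemma~\ref{le:cleaverhasdiag} together with Proposition~\ref{prop:nonTeichCrit}.

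The remaining and most delicate case, which I expect to be the main obstacle, is a component $W$ that is a single \emph{non-simple} cylinder $C$ whose boundary saddle connections are \emph{all} external. Here no internal saddle connection is available to feed into the two lemmas above, and, crucially, because we are in a pure vertical presentation all boundary coordinates are parallel (vertical), so the triangle-in-the-core mechanism of Lemma~\ref{le:pullthrough1} is unavailable as well. The two boundary circles of $C$ are subdivided by the singularities into collinear vertical saddle connections, so a genuine singularity on $\partial C$ forces extra cone angle, hence a richer re-gluing of these coordinates through the finite core $\cC(X)$. The non-vertical cylinder must then be produced from this multiplicity together with the way $h$ pairs the external coordinates: after passing to the conjugate surface via Lemma~\ref{lem:conjugation} if necessary, one has to exhibit a diagonal closed geodesic that crosses $C$ and is closed up by the finite part of the surface. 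Carrying this out requires a finite case analysis on the cylinder adjacency graph of $\mathcal Y$ and on the number of saddle connections on each boundary circle, and this bookkeeping is where the real work lies.

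Once every component of $\mathcal Y$ is a single simple cylinder, all boundary saddle connections of the basic domains are external, and the surface consists only of half-planes and simple cylinders, of which there is at least one by hypothesis. Each simple cylinder has precisely two ends; since $h$ sends vertical cylinders to vertical cylinders while reversing orientation, it either fixes a given simple cylinder and exchanges its two ends, or swaps it with a second simple cylinder and pairs the corresponding ends, so in all cases the end-coordinates occur in exchanged pairs. The external coordinates lying only on half-plane boundaries are, by the reversed-labelling symmetry of the Boissy domains, either $h$-invariant or exchanged in pairs. Collecting these cases gives exactly the asserted description: the boundary saddle connections are all external and are either invariant or exchanged on the two ends of a simple cylinder.
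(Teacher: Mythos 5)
Your overall packaging---decompose the union $\mathcal Y$ of closed vertical cylinders into connected components and show each one is a single simple cylinder---is a reasonable reformulation of the lemma, and your first case (a saddle connection occurring at both ends of the \emph{same} cylinder yields a non-vertical cylinder, as in the proof of Lemma~\ref{le:noinvint}) is sound. But the two remaining cases, which carry all the content, are genuine gaps. First, your claim that an internal saddle connection $\gamma$ joining two \emph{distinct} cylinders leads, by tracing $\gamma$ and $h(\gamma)$, to a cleaver is unsubstantiated and false in general: a cleaver by definition requires a \emph{simple} cylinder with both of its boundary circles glued to one non-simple cylinder, whereas a cluster of two non-simple cylinders glued to each other along internal saddle connections (e.g.\ two cylinders, each with two external and two internal boundary saddle connections, sharing their internal ones) contains no simple cylinder at all, so Lemma~\ref{le:cleaverhasdiag} never applies. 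Second, you explicitly concede the case of a component consisting of a single non-simple cylinder with all-external boundary, offering only the hope that a ``finite case analysis on the cylinder adjacency graph'' will produce a diagonal cylinder; as written this is a plan, not an argument, and you correctly observe that neither the cleaver lemma nor the triangle mechanism of Lemma~\ref{le:pullthrough1} is available there.

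The paper closes exactly these configurations not by local surgery but by a global counting argument, which is absent from your proposal. In a hyperelliptic component every vertical cylinder is fixed by the involution (otherwise resizing one cylinder of a swapped pair would stay in the stratum while destroying the involution)---note this also contradicts your final paragraph, where you allow $h$ to swap two simple cylinders. Counting Weierstra\ss\ points then gives $2g+2=(2-Z)+(2-P)+2C+f$, where $C$ is the number of cylinders and $f$ the number of invariant external saddle connections, while the fact that the $i+f+e=2g-2+Z+P$ boundary saddle connections furnish period coordinates for the stratum yields the inequalities $2a+2b+4c\le e$ and $2b+4d\le 2i$ for the four cylinder types $a$ (simple, two external), $b$ (two external, $\ge 2$ internal), $c$ ($\ge 4$ external), $d$ (all internal). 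Combining with $e+i=2(a+b+c+d)$ forces $b=c=0$, and then $d=0$ because all-internal cylinders could only glue to each other and would form their own connected component of the surface. It is this one computation that eliminates precisely the configurations your case-by-case approach cannot reach; without it, or a worked-out substitute for your second and third cases, the proof is incomplete.
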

\par
\begin{proof} By the preceding Lemma~\ref{le:cleaverhasdiag} and our main
criterion Proposition~\ref{prop:nonTeichCrit} we may exclude systems of cylinders
in the complement of the half-planes that contain a cleaver. Since we
excluded invariant internal saddle connections in Lemma~\ref{le:noinvint}
there are four types of cylinders in a generalized Boissy presentation
that we have not yet excluded:
\begin{itemize}
\item[a)] A simple cylinder between two external saddle connections, hence zero
internal saddle connections.
\item[b)] Two external and at least two internal saddle connections.
\item[c)] At least four external and zero or more internal saddle connections.
\item[d)] Zero external and at least four internal saddle connections.
\end{itemize}
(It is in (d) that we can exclude cylinders with two internal saddle
connections, as they will create a cleaver.) 
We denote by the same letter as in the list the number of such cylinders
by~$e$ the number of non-invariant external saddle connections, by
$i$ the number of non-invariant internal saddle connections and by~$f$
the number of invariant ('fixed') external saddle connections.
\par
We count the number of Weierstra\ss\ points. These are the pole (if
unique), the zero (if unique), the~$f$ midpoints of the invariant
(hence external) saddle connections, and two points in the interior of
each cylinder. In fact, in a hyperelliptic component all cylinders are fixed
by the involution, since otherwise we could change the size of one of them,
destroying the existence of the involution while staying in the stratum.
If we write~$Z$ for the number of zeros, $P$ for the number of poles,
and $C$ for the number of cylinders, we get
\bes
2g+2 \= (2-Z) + (2-P) + 2C + f\,.
\ees
Altogether we find the first of the conditions
\bas
a+b+c+d &\= \frac12\bigl(2g-2+Z+P-f\bigr) \\
2a+2b+4c &\,\leq\, e  \\
2b+4d  &\,\leq\, 2\bigl(2g-2-Z+P-f-e\bigr)\,.
\eas
The second condition gives the count of non-invariant external coordinates,
which appear in on opposite sides of the same cylinder (if adjacent to some
cylinder). The third equation gives the count of non-invariant internal
coordinates, each of which is counted twice. The right hand side expresses
that the total number of boundary saddle connections $i+f+e=2g-2+Z+P$ gives
a coordinate system for the stratum $\omoduli[g,n](\mu)$.
\par
Summing twice the second and the third equation implies $b=c=0$. In this
situation the internal coordinates of any element in d) can only be glued
to the same cylinder, since a) does not provide internal coordinates. This
means that the element in d) form their own connected components. Since this
is impossible, we conclude $d=0$ and all the claims of the lemma.
\end{proof}
\par
Next, we constrain how these boundary saddle connections are distributed
on Boissy's basic domains as in Figure~\ref{PullingThrough}, left.
\par
\begin{lemma} \label{le:remainingconfig}
The boundary of each Boissy's basic domains has either
\begin{itemize}
\item[i)] no boundary saddle connections, or
\item[ii)] only invariant external saddle connections, or
\item[iii)] exactly one non-invariant external saddle connection (thus
bounding a simple cylinder on its other side).
\end{itemize}
\end{lemma}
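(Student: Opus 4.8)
The plan is to argue by contradiction, exhibiting for the excluded configuration a path inside~$M$ that reaches a surface carrying a vertical cylinder together with a non-vertical saddle connection outside all vertical cylinders; by Proposition~\ref{prop:nonTeichCrit} such a surface cannot generate a \Teichmuller curve, contradicting that it lies in the \Teichmuller curve~$M$. By Lemma~\ref{le:existschimney} we already know that $(X,\omega)$ is built from half-planes and simple cylinders, and that every boundary saddle connection is external, either invariant or exchanged across a simple cylinder. It therefore suffices to rule out that the boundary of a single half-plane~$H$ carries a non-invariant saddle connection~$v$ together with a second boundary saddle connection~$w$; the surviving possibilities are then exactly (i), (ii) and (iii).

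First I would fix the geometry around~$v$. Being non-invariant and external, $v$ bounds a simple vertical cylinder~$C$ whose opposite boundary is the hyperelliptic image~$h(v)$, lying on the half-plane~$h(H)$; we may take $w$ to be adjacent to~$v$ in the boundary chain of~$H$, sharing a singularity~$p$. The main tool is the \emph{pulling through cylinders} move of Figure~\ref{PullingThrough}: a path in the stratum that shears~$C$ and reglues its two mouths. Just as for coordinate dancing, this path stays in~$M$ because it rescales the real parts of the periods by a common factor and adds the rescaled real parts to the imaginary parts, so that the $\bR$-linear defining equations of~$M$ continue to hold on the real and imaginary parts separately (the same verification as in Lemma~\ref{lem:conjugation}).

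The heart of the argument is that the presence of the second saddle connection~$w$ obstructs pulling~$C$ through harmlessly. As the mouth~$v$ is pulled past the base of~$w$, a regluing is forced in order to retain a generalized Boissy presentation, and the newly created boundary saddle connection — joining a vertex of~$w$ to a vertex on the $h(v)$-side across the sheared cylinder — acquires a nonzero horizontal period and is hence non-vertical. Since it bounds a half-plane, it lies outside the closures of the vertical cylinders, which is precisely the hypothesis of Proposition~\ref{prop:nonTeichCrit}. Hyperellipticity enters to track the $h$-image of the regluing, so that the resulting datum is again a genuine generalized Boissy presentation. I would run the two subcases in parallel: when $w$ is non-invariant it also bounds a simple cylinder, and the two mouths can be aligned by the shear; when $w$ is invariant it is self-paired under~$h$ and tilts over under the shear — both endings produce a non-vertical boundary saddle connection.

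The step I expect to be the main obstacle is showing that the forced regluing genuinely produces a non-vertical \emph{boundary} saddle connection of a basic domain, rather than merely a non-vertical saddle connection lying \emph{inside} the vertical cylinder~$C$, which would not contradict Proposition~\ref{prop:nonTeichCrit}. Establishing this requires a careful local analysis of the moment of tilt over, showing that the extra vertex supplied by~$w$ is exactly what the pulled mouth catches, and, by contrast, that for $v$ alone (case (iii)) the same path only shifts~$C$ without ever creating a non-vertical boundary saddle connection. Checking the degenerate configurations — when $w$ shares both endpoints with cylinders, or when the aligned vertices coincide — is where most of the bookkeeping will lie.
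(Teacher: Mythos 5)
Your framing coincides with the paper's: reduce via Lemma~\ref{le:existschimney} to ruling out a basic domain that carries a non-invariant saddle connection together with any second boundary saddle connection, split into the two subcases (second one non-invariant, or invariant), travel along a pull-through path that stays in~$M$ by $\bR$-linearity, and contradict Proposition~\ref{prop:nonTeichCrit}. But the step you yourself flag as ``the main obstacle'' --- producing a non-vertical saddle connection genuinely \emph{outside} the closures of the vertical cylinders --- is precisely the content of the lemma, and it is left undone; moreover, the mechanism you sketch for it is not the one that works. You import the tilt-over-and-reglue mechanism from coordinate dancing and hope that an intermediate regluing, as the mouth of the cylinder passes the base of~$w$, creates a non-vertical \emph{boundary} saddle connection while the vertical cylinder persists; you then correctly worry that the new saddle connection might only be a diagonal inside the cylinder~$C$, which contradicts nothing. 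The paper sidesteps this dilemma entirely. It first normalizes by a vertical shear and a cut-and-reglue so that each relevant simple cylinder is twisted by less than its waist length and is decomposed by its diagonal into two \emph{acute triangles}, and it replaces the waist curves in the coordinate system by the four triangle sides $v_1,\ldots,v_4$ (Figure~\ref{PullingThrough}, left). It then runs the pull-through path \eqref{eq:pullthrough} --- the conjugation-type path of Lemma~\ref{lem:conjugation}, not the dancing path \eqref{eq:dancingeq} whose formula you describe --- all the way to its endpoint $t=1$, and at the endpoint looks not for a stray saddle connection but for a new \emph{cylinder} in a non-vertical direction: with two adjacent non-invariant saddle connections, the pulled-through acute triangles of the two simple cylinders assemble into such a cylinder (Figure~\ref{FindingCylinders}, right); with an adjacent invariant saddle connection, the pull-through makes that vertical coordinate a diagonal of a new cylinder, and the side of the cut-out acute triangle not lying inside the cylinder supplies a saddle connection not parallel to its core (Figure~\ref{FindingCylinders}, left). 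Rotating the new cylinder's direction to the vertical, Proposition~\ref{prop:nonTeichCrit} applies, since the formerly vertical saddle connections are now non-vertical and lie outside it.

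So the gap is concrete and twofold. First, the central verification is deferred rather than carried out: without it the argument only restates what must be shown. Second, the route you propose would indeed founder on the inside-versus-outside problem you identify --- a regluing at the moment of tilt-over naturally produces diagonals of the sheared cylinder, not boundary saddle connections of a half-plane. The missing idea is the acute-triangle normalization together with completing the pull-through to $t=1$ and hunting for a non-vertical \emph{cylinder} (plus any saddle connection not parallel to it), which makes the contradiction with Proposition~\ref{prop:nonTeichCrit} immediate after a rotation and removes any need to analyze intermediate regluings or degenerate configurations of~$w$.
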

\par
\begin{proof}
Suppose some basic domain has two adjacent non-invariant saddle connections
on its boundary.
%%%%%%%%%%%%%%%%%%%%%
% pic_pulling_through.tex

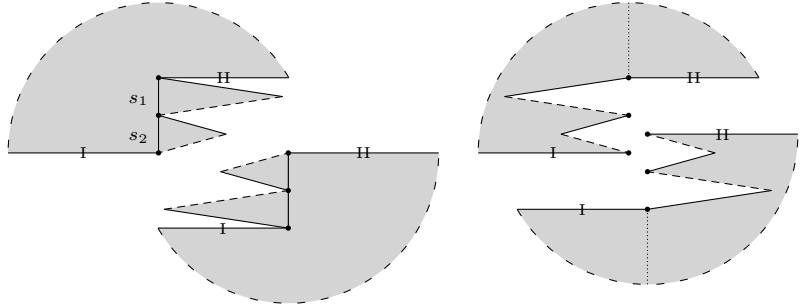
\begin{figure}[htb]
	\centering
\begin{tikzpicture} [scale=.5]
\definecolor{color1}{rgb}{0.83, 0.83, 0.83} %lightgray
\begin{scope}
%% Punkte
\coordinate (P1) at (0,0);
\coordinate (P2) at (0,1);
\coordinate (P3) at (0,2);
\coordinate (P4) at (3.3,1.5);
\coordinate (P5) at (1.8,0.5);
%
% Halbkreis
\path[fill=color1] (-4,0) -- (P1) node[pos=0.5] {\tiny$\mathrm{I}$}
-- (P2) -- (P3) -- (30:4) node[pos=0.5] {\tiny$\mathrm{II}$}
 arc (30:180:4);
\path[draw] (-4,0) -- (P1) -- (P2) -- (P3)  -- (30:4);
\path[draw,loosely dashed] (30:4) arc (30:180:4);
%
% Dreiecke
\path[fill=color1] (P3) -- (P4) -- (P2);
\path[draw] (P2) -- (P3) -- (P4);
\path[draw,densely dashed] (P4) -- (P2);
\path[fill=color1] (P1) -- (P2) -- (P5);
\path[draw] (P1) -- (P2) -- (P5);
\path[draw,densely dashed] (P5) -- (P1);
%
%% Punkte Fuellung
\fill[] (P1) circle (2pt);
\node at (P1) [above left = 0.1mm of P1] {\scriptsize$s_2$};
\fill[] (P2) circle (2pt);
\node at (P2) [above left = 0.1mm of P2] {\scriptsize$s_1$};
\fill[] (P3) circle (2pt);
%\fill[] (P4) circle (2pt);
%\fill[] (P5) circle (2pt);
%
%%% Striche
%\draw (-2,0.2) -- (-2,-0.2);
%\draw (1.8,2.2) -- (1.8,1.8);
%\draw (1.9,2.2) -- (1.9,1.8);
\end{scope}
\begin{scope}[xshift=3.45cm,rotate=-180]
%% Punkte
\coordinate (P1) at (0,0);
\coordinate (P2) at (0,1);
\coordinate (P3) at (0,2);
\coordinate (P4) at (3.3,1.5);
\coordinate (P5) at (1.8,0.5);
%
% Halbkreis
\path[fill=color1] (-4,0) -- (P1) -- (P2) -- (P3) -- (30:4) arc (30:180:4);
\path[draw] (-4,0) -- (P1) node[pos=0.5] {\tiny$\mathrm{II}$}
-- (P2) -- (P3)  -- (30:4) node[pos=0.5] {\tiny$\mathrm{I}$};
\path[draw,loosely dashed] (30:4) arc (30:180:4);
%
% Dreiecke
\path[fill=color1] (P3) -- (P4) -- (P2);
\path[draw] (P2) -- (P3) -- (P4);
\path[draw,densely dashed] (P4) -- (P2);
\path[fill=color1] (P1) -- (P2) -- (P5);
\path[draw] (P1) -- (P2) -- (P5);
\path[draw,densely dashed] (P5) -- (P1);
%
%% Punkte Fuellung
\fill[] (P1) circle (2pt);
\node at (P1) [below right = 0.5mm of P1] {\scriptsize$ $};
\fill[] (P2) circle (2pt);
\node at (P2) [below right = 0.5mm of P2] {\scriptsize$ $};
\fill[] (P3) circle (2pt);
%\fill[] (P4) circle (2pt);
%\fill[] (P5) circle (2pt);
%
%%% Striche
%\draw (-1.9,0.2) -- (-1.9,-0.2);
%\draw (-2,0.2) -- (-2,-0.2);
%\draw (1.6,2.2) -- (1.6,1.8);
%	
%\coordinate[label={0:$\text{The surface}~\diameter (0)$}] (T1) at (7.5,5);
\end{scope}
% % % % % % % % % % % % % % % % % % % % % % % % % % % % % % % % % % % % % % % % % % % % % % % %
%Pulling through: rechte Figur
% % % % % % % % % % % % % % % % % % % % % % % % % % % % % % % % % % % % % % % % % % % % % % % %
%\draw[fill=gray,opacity=.3] (0,-2) rectangle (30:4); %Hilfsrechteck

\begin{scope}[xshift=12.5cm]
	\begin{scope}
	
	%% Punkte
	\coordinate (P1) at (0,0);
	\coordinate (P2) at (0,1);
	\coordinate (P3) at (0,2);
	\coordinate (P4) at (-3.3,1.5);
	\coordinate (P5) at (-1.8,0.5);
        \coordinate (P6) at (0,4);
	%
	% Halbkreis
	\path[fill=color1] (-4,0) -- (P1) -- (P5) -- (P2) -- (P4) -- (P3) -- (30:4) arc (30:180:4);
	
	\path[draw] (-4,0) -- (P1) node[pos=0.5] {\tiny$\mathrm{I}$}
	(P5) -- (P2) (P4) -- (P3) -- (30:4) node[pos=0.5] {\tiny$\mathrm{II}$};
	\path[draw,densely dashed] (P4) -- (P2) (P5) -- (P1);
	\path[draw,loosely dashed] (30:4) arc (30:180:4);
        \path[draw,densely dotted] (P3) -- (P6);

	%
	%% Punkte Fuellung
	\fill[] (P1) circle (2pt);
	%\node at (P1) [above left = 0.1mm of P1] {\scriptsize$X_2$};
	\fill[] (P2) circle (2pt);
	%\node at (P2) [above left = 0.1mm of P2] {\scriptsize$X_1$};
	\fill[] (P3) circle (2pt);
	%\fill[] (P4) circle (2pt);
	%\fill[] (P5) circle (2pt);
	%
%	%% Striche
%	\draw (-2,0.2) -- (-2,-0.2);
%	\draw (1.8,2.2) -- (1.8,1.8);
%	\draw (1.9,2.2) -- (1.9,1.8);
	\end{scope}
	\begin{scope}[xshift=.5cm,yshift=.5cm,rotate=-180]	
	%% Punkte
	\coordinate (P1) at (0,0);
	\coordinate (P2) at (0,1);
	\coordinate (P3) at (0,2);
	\coordinate (P4) at (-3.3,1.5);
	\coordinate (P5) at (-1.8,0.5);
        \coordinate (P7) at (0,4);
	%
	% Halbkreis
	\path[fill=color1] (-4,0) -- (P1) -- (P5) -- (P2) -- (P4) -- (P3) -- (30:4) arc (30:180:4);
	
	\path[draw] (-4,0) -- (P1) node[pos=0.5] {\tiny$\mathrm{II}$}
	(P5) -- (P2) (P4) -- (P3)  -- (30:4) node[pos=0.5] {\tiny$\mathrm{I}$};
	\path[draw,densely dashed] (P4) -- (P2) (P5) -- (P1);
	\path[draw,loosely dashed] (30:4) arc (30:180:4);
        \path[draw,densely dotted] (P3) -- (P7);
	%
	%% Punkte Fuellung
	\fill[] (P1) circle (2pt);
	%\node at (P1) [above left = 0.1mm of P1] {\scriptsize$X_2$};
	\fill[] (P2) circle (2pt);
	%\node at (P2) [above left = 0.1mm of P2] {\scriptsize$X_1$};
	\fill[] (P3) circle (2pt);
	%\fill[] (P4) circle (2pt);
	%\fill[] (P5) circle (2pt);
	%
%	%% Striche
%	\draw (-1.9,0.2) -- (-1.9,-0.2);
%	\draw (-2,0.2) -- (-2,-0.2);
%	\draw (1.8,2.2) -- (1.8,1.8);
%	
%\coordinate[label={0:$\text{The surface}~\diameter (1)$}] (T1) at (4.5,5.5);
\end{scope}		
\end{scope}
\end{tikzpicture}
\caption{Pulling through cylinders: The surface $\Phi(0)$ on the left and
$\Phi(1)$ on the right}
 \label{PullingThrough}
\end{figure}	
%%%%%%%%%%%%%%%%%%%%%
%\clearpage
%\begin{figure}
%\begin{center}
%\begin{overpic}[width=0.8\textwidth]{PullingThrough}
%\end{overpic}
%\vspace{-1.6cm}
% \caption{Pulling through}
%  \label{PullingThrough}
%\end{center}
%\end{figure}
%
Shearing by a vertical parabolic element we may assume that those simple
cylinder with vertical core curves bounding the non-invariant boundary
saddle connections do not have horizontal saddle connections. By cut and
re-glue we may present the cylinders to be twisted by less that the length of
the waist curve, i.e. such that the diagonal decomposes the cylinder into two
acute triangles, see the left of Figure~\ref{PullingThrough}. This figure also
shows the new coordinates that we are using. The coordinate system no longer
contains the waist curves of the cylinders~$s_i$, but the sides of the acute
triangles $v_1,\ldots,v_4$  and (if they exist) the remaining coordinates
$v_5,\ldots,v_n$ on the boundary of the half planes.
%Suppose $(\Per(v_1),
%\ldots,\Per(v_n))$ are the periods of these saddle connections.
\par
We consider the 'pulling through' path $\phi(t) = (X_t,\omega_t)$
as in~\eqref{eq:pullthrough}. For $t \in [0,1]$. This path stays within the
complex two-dimensional manifold containing the $\GL_2^+(\bR)$-orbit of the
initial surface. At its endpoint $t=1$ we construct the desired cylinder as
drawn on the right of Figure~\ref{FindingCylinders}.
Proposition~\ref{prop:nonTeichCrit} applies, giving a contradiction.
\par
%%%%%%%%%%%%%%%%%%%%%
% pic_pulling_through.tex

\begin{figure}[htb]
	\centering
\begin{tikzpicture} [scale=.5]
\definecolor{color1}{rgb}{0.83, 0.83, 0.83} %lightgray
\begin{scope}
%%%%%%%%%%%%%%%%%%%%%%%%%
% linke Figur
%%%%%%%%%%%%%%%%%%%%%%%%%	
	\begin{scope}
	
	%% Punkte
	\coordinate (P0) at (-4,0);
	\coordinate (P1) at (0,0);
	\coordinate (P2) at (0,.6);
	\coordinate (P3) at (0,1.2);
	\coordinate (P4) at (0,1.8);
	\coordinate (P5) at (0,2.4);		
	\coordinate (P6) at (-2.7,2.5);
	\coordinate (P7) at (0,3.1);
	\coordinate (P8) at (-4,3.1);
	%
	% Halbkreis
	\path[fill=color1] (P0) -- (P1) -- (P2) -- (P3) -- (P4) -- (P5) -- (P6) -- (P7) -- (50:4) arc (30:179:3.5) -- (P0);
	\path[draw] (P0) -- (P1)   node[pos=0.5] {\tiny$\mathrm{I}$}
	node[left,xshift=3.5,yshift=4] {\tiny $v_1$} -- (P2) 
	                         node[left,xshift=3.5,yshift=4] {\tiny $v_2$} -- (P3)
	                         node[left,xshift=3.5,yshift=4] {\tiny $v_3$} -- (P4)
	                         node[left,xshift=3.5,yshift=4] {\tiny $v_4$} -- (P5) 
	                         node[left,yshift=3.5] {\tiny $v_5$} -- (P6) -- (P7)
	                         node[left,xshift=-5,yshift=1] {\tiny $v_6$}  -- (50:4)
	                         node[pos=0.5] {\tiny$\mathrm{II}$}
	                         ;

	\path[draw,densely dotted] (P4) -- (P6);
	\path[draw,loosely dashed] (50:4) arc (30:180:3.5) -- (P0);
%	%
	%% Punkte Fuellung
	\fill[] (P0) circle (2pt);	
	\fill[] (P1) circle (2pt);
	%\node at (P1) [above left = 0.1mm of P1] {\scriptsize$X_2$};
	\fill[] (P2) circle (2pt);
	%\node at (P2) [above left = 0.1mm of P2] {\scriptsize$X_1$};
	\fill[] (P3) circle (2pt);
	\fill[] (P4) circle (2pt);
	\fill[] (P5) circle (2pt);
	\fill[] (P6) circle (2pt);
	\fill[] (P7) circle (2pt);		
	%
%%	%% Striche
%	\draw (-2,0.2) -- (-2,-0.3);
%	\draw (1.2,3.3) -- (1.2,2.8);
%	\draw (1.3,3.3) -- (1.3,2.8);
	
	\end{scope}

	\begin{scope}[xshift=1cm,yshift=1.7cm,rotate=-180]		
	%% Punkte
	\coordinate (P0) at (-4,0);
	\coordinate (P1) at (0,0);
	\coordinate (P2) at (0,.6);
	\coordinate (P3) at (0,1.2);
	\coordinate (P4) at (0,1.8);
	\coordinate (P5) at (0,2.4);		
	\coordinate (P6) at (-2.7,2.5);
	\coordinate (P7) at (0,3.1);
	%
	% Halbkreis
	\path[fill=color1] (P0) -- (P1) -- (P2) -- (P3) -- (P4) -- (P5) -- (P6) -- (P7) -- (50:4) arc (30:179:3.5) -- (P0);
	\path[draw] (P0) -- (P1) 
	node[pos=0.5] {\tiny$\mathrm{II}$}
	node[right,xshift=-3,yshift=-4.2] {\tiny $v_1$} -- (P2) 
	node[right,xshift=-3,yshift=-4.2] {\tiny $v_2$} -- (P3)
	node[right,xshift=-3,yshift=-4.2] {\tiny $v_3$} -- (P4)
	node[right,xshift=-3,yshift=-4.8] {\tiny $v_4$} -- (P5) 
	node[right,xshift=2,yshift=-4] {\tiny $v_5$} -- (P6) -- (P7)
	node[right,xshift=10,yshift=-1] {\tiny $v_6$}  -- (50:4)
	node[pos=0.5] {\tiny$\mathrm{I}$};
	
	\path[draw,densely dotted] (P4) -- (P6);
	%\path[draw,loosely dashed] (50:4) arc (50:180:4);
	\path[draw,loosely dashed] (50:4) arc (30:180:3.5) -- (P0);
	%	%
	%% Punkte Fuellung
	\fill[] (P0) circle (2pt);	
	\fill[] (P1) circle (2pt);
	%\node at (P1) [above left = 0.1mm of P1] {\scriptsize$X_2$};
	\fill[] (P2) circle (2pt);
	%\node at (P2) [above left = 0.1mm of P2] {\scriptsize$X_1$};
	\fill[] (P3) circle (2pt);
	\fill[] (P4) circle (2pt);
	\fill[] (P5) circle (2pt);
	\fill[] (P6) circle (2pt);
	\fill[] (P7) circle (2pt);		
	%
%	%	%% Striche
%	\draw (-2,0.2) -- (-2,-0.3);
%	\draw (-2.1,0.2) -- (-2.1,-0.3);
%	%\draw (1.2,3.3) -- (1.2,2.8);
%	\draw (1.3,3.3) -- (1.3,2.8);	

\end{scope}

\end{scope}
%%%%%%%%%%%%%%%%%%%%%%%%%
% rechte Figur
%%%%%%%%%%%%%%%%%%%%%%%%%
\begin{scope}[xshift=10.5cm, yshift=-1.2cm]
	\begin{scope}[yshift=2cm]
		
		%% Punkte
		\coordinate (P0) at (-4,0);
		\coordinate (P1) at (0,0);
		\coordinate (P2) at (-1.5,.3);
		\coordinate (P3) at (0,1.3);
		\coordinate (P4) at (-2.7,1.3);
		\coordinate (P5) at (0,2.3);		

		%
		% Halbkreis
		%\path[fill=color1] (P0) -- (P1) -- (P2) -- (P3) -- (P4) -- (P5) -- (35:4) arc (35:180:4);%Original
		
		\path[fill=color1] (P0) -- (P1) -- (P2) -- (P3) -- (P4) -- (P5) -- (35:4) arc (35:180:4);
		%	
%			% Halbkreis
%		\path[fill=color1] (P0) -- (P1) -- (P2) -- (P3) -- (P4) -- (P5) -- (P6) -- (P7) -- 
%		(50:4) arc (30:179:3.5) -- (P0);
		
		\path[draw] (P0) -- (P1) 
		node[pos=0.5] {\tiny$\mathrm{I}$}
		node[left,xshift=3.5,yshift=4] {\tiny $v_4$} -- (P2) 
		-- (P3) node[below,xshift=-2.5,yshift=-2] {\tiny $v_3$}
		node[above,xshift=-5,yshift=-2.5] {\tiny $v_2$} -- (P4)
		-- (P5) node[left,xshift=-8,yshift=-1] {\tiny $v_1$} -- (35:4)
		node[pos=0.5] {\tiny$\mathrm{II}$};
		
		\path[draw,densely dotted] (P2) -- (P4);
		\path[pattern=north east lines] (P2) -- (P3) -- (P4);
		\path[draw,loosely dashed] (35:4) arc (35:180:4); 
		%	%
		%% Punkte Fuellung
		\fill[] (P0) circle (2pt);	
		\fill[] (P1) circle (2pt);
		%\node at (P1) [above left = 0.1mm of P1] {\scriptsize$X_2$};
		\fill[] (P2) circle (2pt);
		%\node at (P2) [above left = 0.1mm of P2] {\scriptsize$X_1$};
		\fill[] (P3) circle (2pt);
		\fill[] (P4) circle (2pt);
		\fill[] (P5) circle (2pt);
		\fill[] (P6) circle (2pt);
		\fill[] (P7) circle (2pt);		
		%
%%		%	%% Striche
%		\draw (-2,0.2) -- (-2,-0.3);
%		\draw (1.6,2.5) -- (1.6,2);
%		\draw (1.5,2.5) -- (1.5,2);
	\end{scope}
	\begin{scope}[xshift=1cm,yshift=2.1cm,rotate=-180]
	
	%% Punkte
	\coordinate (P0) at (-4,0);
	\coordinate (P1) at (0,0);
	\coordinate (P2) at (-1.5,.3);
	\coordinate (P3) at (0,1.3);
	\coordinate (P4) at (-2.7,1.3);
	\coordinate (P5) at (0,2.3);		
	
	%
	% Halbkreis
	\path[fill=color1] (P0) -- (P1) -- (P2) -- (P3) -- (P4) -- (P5) -- (35:4) arc (35:180:4);
	\path[draw] (P0) -- (P1) 
	node[pos=0.5] {\tiny$\mathrm{II}$}
	node[below,xshift=3.5,yshift=1] {\tiny $v_4$} -- (P2) 
	-- (P3) node[below,xshift=8,yshift=1] {\tiny $v_2$}
	node[above,xshift=3,yshift=1] {\tiny $v_3$} -- (P4)
	-- (P5) node[right,xshift=9
	,yshift=1] {\tiny $v_1$} -- (35:4)
	node[pos=0.5] {\tiny$\mathrm{I}$};
	
	\path[draw,densely dotted] (P2) -- (P4);
	\path[pattern=north east lines] (P2) -- (P3) -- (P4);
	\path[draw,loosely dashed] (35:4) arc (35:180:4);
	%	%
	%% Punkte Fuellung
	\fill[] (P0) circle (2pt);	
	\fill[] (P1) circle (2pt);
	%\node at (P1) [above left = 0.1mm of P1] {\scriptsize$X_2$};
	\fill[] (P2) circle (2pt);
	%\node at (P2) [above left = 0.1mm of P2] {\scriptsize$X_1$};
	\fill[] (P3) circle (2pt);
	\fill[] (P4) circle (2pt);
	\fill[] (P5) circle (2pt);
	\fill[] (P6) circle (2pt);
	\fill[] (P7) circle (2pt);		
	%
%	%		%	%% Striche
%	\draw (-2,0.2) -- (-2,-0.3);
%	\draw (-2.1,0.2) -- (-2.1,-0.3);
%	%\draw (1.6,2.5) -- (1.6,2);
%	\draw (1.5,2.5) -- (1.5,2);
\end{scope}
\end{scope}
%	\draw[red] (-5,4.8) -- (15,4.8); %Hilfslinie
%	\draw[red] (-5,-3.1) -- (15,-3.1); %Hilfslinie
\end{tikzpicture}
 \caption{Finding cylinders}
 \label{FindingCylinders}
\end{figure}
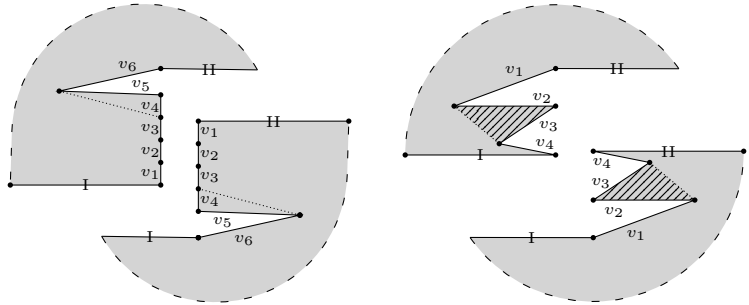	
%%%%%%%%%%%%%%%%%%%%%
%\begin{figure}
%\begin{center}
%\begin{overpic}[width=0.8\textwidth]{FindingCylinders}
%\end{overpic}
%\vspace{-1.6cm}
% \caption{Finding cylinders}
%  \label{FindingCylinders}
%\end{center}
%\end{figure}
The other case to rule out is (at least) one simple cylinder (with its
adjacent)  non-invariant external saddle connections next to some 
invariant external saddle connection. By 'pulling through' as in the previous
case we arrive at the configuration in Figure~\ref{FindingCylinders}
on the left. Then there is a cylinder having this vertical coordinate as one
of its diagonals, and together with the saddle connection bounding the
cut-out acute triangle that does not lie inside the cylinder we have the
configuration that provides a contradiction by applying
Proposition~\ref{prop:nonTeichCrit}.
\end{proof}
\par
\medskip
\paragraph{\textbf{Rotate again for the next dance}} So far, for
Lemma~\ref{le:remainingconfig} we have been using the generalized Boissy presentation,
i.e.\ admitting cylinders with vertical core curves. In order to get further
constraints on the periods of the saddle connections we have to use again paths
in the linear manifold that leave the $\GL_2(\bR)$-orbit, i.e.\ dancing paths
or a variant of it. For this purpose consider a surface with in
Lemma~\ref{le:remainingconfig} and pull through the cylinders, like
in Figure~\ref{PullingThrough} right (but with only one cylinder or one invariant
external saddle connection per basic domain). Cutting the basic domains
open along vertical lines at the end of the broken line of saddle connections
(the dotted lines in Figure~\ref{PullingThrough} right), regluing along the
infinite horizontal separatricies  and rotating the picture by~$\pi$ gives
a flat surfaces in (standard) Boissy presentation that we use next.
\par
\begin{lemma}
Suppose the surface~$(X,\omega)$ in the presentation of Lemma~\ref{le:remainingconfig}
has an invariant external saddle connections (as in (ii) of that lemma).
Then the \Teichmuller curve generated by $(X,\omega)$ is one of the obvious
cases from Proposition~\ref{prop:obviousTeich}  (iii a) or (iii b) covering surfaces
with residues conditions.
\end{lemma}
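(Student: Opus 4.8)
The plan is to imitate the strategy of Proposition~\ref{prop:HYPtwosimplepoles}: first normalise the combinatorics, then run a dancing path and convert the closedness of~$M$ into rationality constraints that exhibit the covering. I would start from the surface in the presentation of Lemma~\ref{le:remainingconfig} and, following the paragraph \emph{Rotate again for the next dance}, pull through the cylinders, cut along the vertical lines at the ends of the broken lines of saddle connections, reglue along the infinite horizontal separatrices and rotate by~$\pi$ to obtain a standard Boissy presentation. In this presentation the chosen invariant external saddle connection~$\gamma$ appears as a boundary coordinate fixed by the hyperelliptic involution~$h$, while the other coordinates are either $h$-invariant or exchanged in pairs at the two ends of the simple cylinders of type~(iii) of Lemma~\ref{le:remainingconfig}.

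I would first pin down the combinatorial type. The midpoint of~$\gamma$ is a Weierstra\ss\ point and its two endpoints are interchanged by~$h$, hence are singularities of equal order; feeding this into the Weierstra\ss\ count of Lemma~\ref{le:existschimney} together with the standing hypothesis that the pole order is at least two should constrain the building blocks enough to point toward a genus-one base of type $\ol\mu=(m,-m/2,-m/2)$, with one zero and two poles. As there are only two poles, the residue theorem already makes their residues opposite, so that no residue condition is forced at this stage.

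Next I would run coordinate dancing~\eqref{eq:dancingeq} on~$\gamma$ and on the coordinates at the ends of the simple cylinders, invoking Lemma~\ref{lem:conjugation} to pass to a conjugate surface whenever a broken line has to be made convex. Since~$M$ is two-dimensional, closed and algebraic, the dance must return into~$M$, and just as the parabolic argument in Proposition~\ref{prop:HYPtwosimplepoles} forced the periods $\Per(v_{2j+1}+v_{2j+2})$ to be rational, here the dance should force the half-planes and simple cylinders of~$(X,\omega)$ to organise into~$d$ congruent copies of a single genus-one building block glued by a permutation. This is precisely a degree-$d$ cover $\pi\colon(X,\omega)\to(E,\eta)$ with $\omega=\pi^*\eta$ and $\eta$ of type~$\ol\mu$, so $M$ is a component of a Hurwitz space $\Hur(d,\ol\mu)$. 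Because $M$ has projective dimension one while the stratum $\omoduli[1](\ol\mu)$ has projective dimension two, the base must be constrained; the only residue locus cutting the dimension down by one is the residue-zero locus, so the cover lands there, exactly as in the proof of Proposition~\ref{prop:obviousTeich}.

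Finally I would extract the ramification data: hyperellipticity together with the interchange of the two poles forces full ramification over both poles, and Riemann--Hurwitz at the zero leaves only full ramification when~$d$ is odd (case~(iii a)) or two points of order~$d/2$ when~$d$ is even (case~(iii b)). I expect the main obstacle to be the heart of the third step, namely upgrading the rational relations coming from the returning dance into the genuine combinatorial statement that the building blocks form $d$ copies of one genus-one block glued by a permutation---that is, recognising the covering map rather than merely a system of rational constraints. The decisive subtlety is the bookkeeping of which coordinates are $h$-invariant and which are exchanged: it is precisely the presence of the invariant external saddle connection~$\gamma$ that closes the period data into a rank-two (genus-one) block rather than the genus-zero block that governs the non-invariant case producing~(i) and~(ii).
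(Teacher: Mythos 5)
Your opening moves coincide with the paper's: pull the cylinder through, cut, reglue and rotate as in the paragraph preceding the lemma, then run a coordinate dance on the invariant external saddle connection. But the heart of the argument --- the step you yourself flag as ``the main obstacle'' --- is precisely what the paper's proof consists of, and your proposed substitutes for it do not work. The rationality mechanism you import from Proposition~\ref{prop:HYPtwosimplepoles} is not what drives the higher-order-pole case: there closedness of~$M$ is exploited through Proposition~\ref{prop:nonTeichCrit} applied to cylinders that are \emph{created} by the dance, yielding parallelism (not rationality) relations. Concretely, the paper analyzes how the dance terminates: either the invariant coordinates land on a basic domain already carrying two pulled-through coordinates (three pairwise non-parallel coordinates on one domain, contradiction), or the dance stops at a tilt-over, where a pulled-through coordinate and the adjacent invariant coordinate bound a new cylinder; comparing the waist curve $v_2+v_3$ before the dance with the waist $-v_1+v_2$ after it forces the relation $\Per(v_2+v_3-sv_1)=0$ with $s=1$, and chasing this around the surface shows every other cylinder or invariant coordinate carries a proportional triple $u_i=kv_i$. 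The paper then establishes a \emph{parity constraint}: labelling the Boissy domains around a pole cyclically, cylinders and invariant saddle connections can only sit on domains of even label; this is what shows cylinders never join domains adjacent to the same pole, hence there are exactly two poles, and comparing clockwise with counterclockwise dances pins $k=1$, i.e.\ all blocks are isometric. Only then does the map stacking the cylinders (equivalently, the invariant saddle connections) on top of each other extend to a well-defined covering, with the degree parity distinguishing (iii~a) from (iii~b). None of these steps appears in your proposal.

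Your preliminary step also misfires in detail: the Weierstra\ss\ count in Lemma~\ref{le:existschimney} constrains the numbers of invariant saddle connections and cylinders on $(X,\omega)$ itself, not the signature of a base surface that does not yet exist, so it cannot ``point toward'' $\ol{\mu}=(m,-m/2,-m/2)$; in the paper the two poles and the genus-one base are \emph{outputs} of the parity argument, not inputs. Similarly, your dimension count forcing the residue-zero locus is fine once the covering exists, but it presupposes that~$M$ is a component of a Hurwitz space, which is the whole content of the lemma. In short: the scaffolding (pull-through, rotate, dance) matches the paper, but the actual proof --- the termination dichotomy, the relation $s=1$, the global chase giving $u_i=kv_i$, the parity argument forcing two poles and $k=1$, and the stacking covering --- is missing, as you partly acknowledge.
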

\par
\begin{proof}
We may assume by Proposition~\ref{prop:hypgivescyl} that there is at least one cylinder.
We pull this cylinder through and rotate for the next dance, as described above.
We select one of the invariant external saddle
connections~$v_1$ to apply the coordinate dancing. Since $(X,\omega)$ generates
a \Teichmuller curve, all the invariant external saddle connections are parallel and hence
dance simultaneously and if there are several of them adjacent to one Boissy domain,
they dance (i.e.\ move between the Boissy domains) together. This dance will terminate
in one of two ways: 
\par
The first possibility is that a set of invariant coordinates appears on a basic
domain with two pulled-through coordinates at the end of a certain number of
full dancing paths (i.e., full rotations $2k\pi$ for some $k\in \bN$). This is a contradiction, since we have three or more non-parallel
coordinates on a basic domain, as in Figure~\ref{FindingCylinders} left
(erasing $v_2,v_3,v_4$ to get the simplest case).
\par
The second possibility is the dance ends the moment the cylinder coordinates
are tilted over (i.e., rotations by $(2k+1)\pi $ for some $k\in \bN$). One pulled-through coordinate and the adjacent invariant coordinate
now form a cylinder. Hence if the set of invariant coordinates adjacent to
any Boissy domain contains more than one coordinate we obtain a contradiction. 
\par
Let $v_2$ and $v_3$ be the saddle connections that have been pulled through.
On the original surface $v_2 + v_3$ is the waist curve of the cylinder, so there
must be a relation $\Per(v_2 + v_3 - sv_1) = 0$ for some $s \in \bR$, since otherwise
Proposition~\ref{prop:nonTeichCrit} provides a contradiction. At the end of the
dance $-v_1 + v_2$ is the waist curve of a cylinder (possibly after changing the
role of $v_2$ and $v_3$). This implies that  $v_3 \in \bR \cdot (-v_1 + v_2)$
and thus (possibly changing the orientation of $v_1$) we find $s=1$.
\par
Now we consider the surface globally.  If there were any more invariant coordinates
they cannot appear at the end of the dance in their own basic domains as this would
form a contradiction (by applying Proposition~\ref{prop:nonTeichCrit} to the
newly formed cylinder). Similarly, if there were any other pulled-through coordinates
they will now appear in "tilted-over" position. Using them as saddle connections
and the newly formed cylinder, Proposition~\ref{prop:nonTeichCrit} will 
provide a contradiction unless one of the pulled-through coordinates is now parallel
to the waist curve of the new cylinder (that is, to~$v_3$) and the other forming
a new cylinder with an invariant coordinate with necessarily parallel waist curve
to $v_1$. 
\par
Labeling these coordinates $u_1,u_2,u_3$ consistently with the $v_i$ we find
$u_1$ and $v_i$ are parallel and in fact, $u_i= kv_i$ for some $k>0$.
\par
Now we label the (even number) Boissy domains adjacent to a poles in counterclockwise
order, starting with a pulled-through cylinder on domain number one. Since the
'first possibility' above provided a contradiction, the first Boissy domain with an
invariant saddle connection has an even number. Rotating in the other direction
we find the the next Boissy domain with pulled-through cylinders sits on a Boissy
domain with an even number and inductively we see that this parity constraint
holds for all cylinder and all invariant saddle connections. Comparing the
triples of saddle connections for a pulled-through cylinder and an invariant saddle
connection for clockwise and counterclockwise dancing implies that all pairwise
comparison factors (denoted by~$k$ above) are equal to one, i.e.\ any two
invariant saddle connections and any two cylinders have the same geometry.
\par
The parity constraint for the Boissy domains containing a (don't pull it through!)
cylinder implies that the cylinders cannot connect Boissy domains adjacent to
the same poles. Consequently, there are precisely two poles since we are in
a hyperelliptic stratum.
\par
The parity constraint (together with the length agreements that follow from $k=1$)
also implies that the map stacking all the cylinders on
top of each other (or equivalently all the invariant external saddle connections
on top of each other) extends to a well-defined covering map. Depending on the
degree of the map we are in case (iii a) or (iii b).
\end{proof}
\par
\begin{lemma}
Suppose the surface~$(X,\omega)$ in the presentation of Lemma~\ref{le:remainingconfig}
has no invariant external saddle connections (as in (ii) of that lemma).
Then the \Teichmuller curve~$M$ generated by $(X,\omega)$ is one of the obvious
cases from Proposition~\ref{prop:obviousTeich} (i a), (ii a), or (ii b).
\end{lemma}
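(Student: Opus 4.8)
The plan is to run the same machinery as in the preceding lemma---coordinate dancing together with pulling through cylinders---now in the residue-free situation, and to read off the three cases at the end. By Proposition~\ref{prop:hypgivescyl} we may assume $(X,\omega)$ carries at least one simple cylinder. First I would pull this cylinder through and rotate for the next dance, as described above, producing a standard Boissy presentation; by the hypothesis of the lemma together with Lemma~\ref{le:remainingconfig}, every basic domain then carries at most one pulled-through cylinder and no invariant external saddle connection whatsoever. I then apply coordinate dancing to a boundary saddle connection of one such cylinder, its hyperelliptic partner dancing simultaneously.

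The termination analysis is identical to that of the previous lemma. If the dance closes up only after full rotations $2k\pi$, a pulled-through pair is carried onto a basic domain already bearing pulled-through coordinates, yielding three pairwise non-parallel coordinates on a single domain (the configuration of Figure~\ref{FindingCylinders}, left); Proposition~\ref{prop:nonTeichCrit} then produces a non-vertical cylinder and a contradiction. Hence the dance ends after an odd rotation $(2k+1)\pi$, at the instant the cylinder coordinates tilt over, and a pulled-through coordinate together with an adjacent one forms a new cylinder. From the original waist curve $v_2+v_3$ and the closedness of $M$ one extracts, exactly as before, a relation $\Per(v_2+v_3-s\,v_1)=0$; comparing it with the new waist curve forces $s=1$, and comparing across consecutive cylinders in both cyclic dancing directions forces every pairwise comparison factor to equal one. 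Thus all the simple cylinders are mutually congruent.

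It remains to globalise. I would label the Boissy domains adjacent to the poles cyclically and record the parity of those carrying cylinders, as in the previous lemma. The decisive difference here is that, with no invariant external saddle connections, no residue condition is imposed, so the target of the resulting Hurwitz cover is a projectively one-dimensional stratum \emph{without} residue conditions: either the genus-zero stratum $\omoduli[0](m-1,m-1,-m,-m)$ with two poles, or the stratum with signature $(m,-m)$ of genus one with a single pole. The congruence of all cylinders means that stacking them on top of one another (equivalently, identifying all the half-plane singularities) extends to a well-defined branched covering map onto this base. Whether the parity constraints force the cylinders to connect Boissy domains around two distinct poles or around a single pole selects between the two-pole case~(i~a) and the one-pole cases~(ii~a),~(ii~b); within the latter, the parity of the covering degree $d$ separates the fully ramified odd case~(ii~a) from the even case~(ii~b).

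The step I expect to be hardest is this final bookkeeping: establishing the parity constraints that govern which Boissy domains the cylinders may join, deducing from them the number of poles of the base, and verifying that the stacking map is a \emph{connected} branched cover with exactly the ramification profile recorded in Proposition~\ref{prop:obviousTeich}. In particular, distinguishing (ii~a) from (ii~b)---and inside (ii~b) the two sub-profiles of full ramification versus two ramification points of order $d/2$ over the zero and the pole---requires tracking the degree parity together with hyperellipticity and Riemann--Hurwitz, precisely as in the proof of Proposition~\ref{prop:obviousTeich}.
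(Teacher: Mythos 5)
Your proposal transplants the machinery of the \emph{invariant-coordinate} lemma into a setting where its key ingredients are absent, and several steps fail concretely. First, the relation $\Per(v_2+v_3-s\,v_1)=0$ that you extract ``exactly as before'' compares a waist curve against an invariant external saddle connection $v_1$ --- but by hypothesis there are no invariant coordinates here, so there is nothing to play the role of $v_1$ and this step has no meaning. Second, your termination dichotomy (closure after $2k\pi$ versus tilt-over at $(2k+1)\pi$) and the parity bookkeeping around each pole both rest on the alternation between invariant saddle connections and cylinders on the cyclically ordered Boissy domains; with every boundary saddle connection coming in a cylinder-exchanged pair, that alternation does not exist, and indeed the paper's proof of this lemma uses no parity argument at all. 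Third, dancing an \emph{arbitrary} boundary coordinate of a cylinder is not controlled: the paper deliberately selects the \emph{steepest} saddle connection (maximal $\Im(\Per(v_i))/\Re(\Per(v_i))$), normalizes it to be vertical, and stretches horizontally precisely so that every coordinate not parallel to the selected one stays on its initial basic domain throughout the dance; without this selection other pulled-through coordinates can tilt over first and the termination analysis you borrow breaks down.

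The paper's actual mechanism is different and worth noting. After pulling through and rotating, one dances the steepest coordinate; since it must land on a domain with exactly one other coordinate (three coordinates on a domain being excluded by Proposition~\ref{prop:nonTeichCrit}), the partner originally on that domain must itself have danced, hence was parallel to the steepest --- and chasing this around the surface shows \emph{every} cylinder contributed a pulled-through coordinate parallel to the selected one, giving $v_{2j-1}=k_j v_1$. A second dance in the \emph{shallowest} direction gives $v_{2j}=\ell_j v_2$. Combining these with parallelism of the waist curves before dancing ($v_{2j-1}+v_{2j}=m_j(v_1+v_2)$) and of the newly created waist curves after dancing ($v_{2j-1}+v_{2j+2}=n_j(v_1+v_4)$) yields a linear system whose only solution is $k_j=\ell_j=m_j=n_j=1$; the covering onto a surface with just two boundary saddle connections, and hence the trichotomy (i~a), (ii~a), (ii~b), then falls out directly, with no stacking-map parity analysis needed. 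Your final paragraph, which flags the parity bookkeeping as the hardest remaining step, is in effect flagging a step that cannot be carried out as proposed and that the correct argument avoids entirely.
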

\par
\begin{proof}
We may assume by Proposition~\ref{prop:hypgivescyl} that there is at least
one cylinder. We pull this cylinder through and rotate for the next dance,
as described previously. The situation is illustrated in the first row of
Figure~\ref{cap:SteepestDancing}. We now select a steepest saddle connection~$v_i$
for dancing, i.e., one for which $\Im(\Per(v_i))/\Re(\Per(v_i))$ is maximal.
The dancing path is easier to visualize if we normalize this coordinate to
be vertical at the start of the dance (see the coordinate~$v_i$ in the
second row of Figure~\ref{cap:SteepestDancing}), as opposed to horizontal
in the original coordinate dancing. (In fact with this choice any saddle connection
not parallel to the selected one will remain adjacent to its initial basic
domain throughout the dance.) We stretch sufficiently in the horizontal direction
(such that $\Re(\Per(v_j)) > |\Per(v_k)|$ for all~$k$ with $v_k$ parallel to~$v_i$
and for all~$j$ in the complementary set~$H$ of ('rather horizontal') saddle
connections).
\par
Now we rotate $\Per(v_i)$ while modifying slightly the periods of the saddle
connections so as to stay in~$M$, just as in~\eqref{eq:dancingeq} with the
role of real and imaginary part swapped. The dancing procedure is illustrated
in the remaining rows of  Figure~\ref{cap:SteepestDancing}). It ends once the
selected saddle connection  appears on the same basic domain as some other saddle
connection. If a dancing coordinate arrives on a domain with two coordinates
we have a contradiction by the three coordinates on a domain argument.
\par
Hence the dance must terminate with a dancing coordinate on a domain with
just one other coordinate. However, this means that the other coordinate that
originated on this domain must have danced and was hence parallel to our steepest saddle connection. If that saddle connection now lies in
a domain by itself we obtain a contradiction as we obtain a cylinder (after
possibly pulling-through) with waist curve not parallel to this saddle connection.
Hence this saddle connection coordinate must have danced to a domain with
exactly one other saddle connection. However, this means a saddle connection
that originally lay on this domain must have danced. Chasing this argument around
the surface we see that every cylinder contributed a pulled-through saddle
connection that danced and hence was parallel to the selected saddle
connection.
\par
Label the saddle connections  $v_1,\ldots,v_d$ where $d=2g+|\mu|-2$ such that
$v_{2j-1}$ and $v_{2j}$ for $j=1,\ldots,d/2$ are the pulled-through coordinates from
each cylinder from left to right on the upper basic domains, and such that the
dance results in $v_{2j-1}$ appearing on the domain with $v_{2j+2}$ (considering the
indices modulo $d$). Our argument above gives $v_{2j-1}=k_jv_1$ for $k_j\in \bR$ and
$j=2,\ldots,d/2$. The same argument dancing instead saddle connections in the
direction such that $\Im(\Per(v_i))/\Re(\Per(v_i))$ is minimal gives $v_{2j}=\ell_j
v_2$ for $\ell_j\in \bR$ and $j=2,\ldots,d/2$.
\par
%%%%%%%%%%%%%%%%%%%%%
\input{pic_last_dance}
%%%%%%%%%%%%%%%%%%%%%
\clearpage
In $(X,\omega)$ the waist curves were necessarily parallel and hence imposed
the condition that $v_{2j-1}+v_{2j}=m_j(v_1+v_2)$ for $m_j\in \bR$. Similarly, after
dancing we create cylinders by pulling through which implies
$v_{2j-1}+v_{2j+2}=n_j(v_1+v_4)$ for $n_j\in \bR$ and $j=1,...,d/2$. 
\par
The only solution to these equations is $k_j=l_j=m_j=n_j=1$ for all~$j$.
Thus all the flat surfaces parametrized by~$M$ admit a cover onto a flat
surface with just two saddle connections on the boundary of the Boissy domains.
Hence the \Teichmuller curve generated by $(X,\omega)$ is one of the obvious
cases as listed in the proposition, the cases being distinguished by the
number of zeros (and poles) in the range of the covering map and the degree
of the cover.
\end{proof}
\par
The \emph{proof of Theorem~\ref{thm:main}} is complete as a combination
of Proposition~\ref{prop:HYPtwosimplepoles} in Section~\ref{sec:simplepole}
and the series of lemmas in this Section~\ref{sec:higherpole}.

%
%\medskip
%\paragraph{\textbf{The last dance}}
%
%\begin{prop} \label{prop:excludeHYPdoublepole}
%OLD: In the hyperelliptic strata with one double pole  $\omoduli[g,2](2g,-2)$
%and $\omoduli[g,3](g,g,-2)$ for $g \geq 1$ there are no \Teichmuller curves.
%\end{prop}
%\par
%The hyperellipticity hypothesis is used at two places. The first one is:
%\par
%

%%%%%%%%%%%%%%%%%%%%%%%%%%%%%%%%%%%%%%%%%%%%%%%%%%%%%%%%%%%%

%\SaveTocDepth{0} 

\makeatletter
\def\part{\@startsection{part}{0}%
\z@{\linespacing\@plus\linespacing}{.5\linespacing}%
  {\normalfont\Large\bfseries\raggedright}}
\makeatother
\part*{Appendix: An $\bR$-linear non-algebraic manifold}
%*********************************************%
%*********************************************%
\noindent by \textsc{Benjamin Bakker}
and \textsc{Scott Mullane}

%%%%%%%%%%%%%%%%%%%%%%%%%%%%%%%%%%%%%%%%%%%%%%%%%%%%%%%%%%%%
%%%%%%%%%%%%%%%%%%%%%%%%%%%%%%%%%%%%%%%%%%%%
%\numberwithin{equation}{section}

%\section*{Appendix: An $\bR$-linear non-algebraic manifold}
%\label{sec:appendix}
%\addtocontents{toc}{\protect\setcounter{tocdepth}{0}}
%\section*{Benjamin Bakker and Scott Mullane}
%%%%%%%%%%%%%%%%%%%%%%%%%%%%%%%%%%%%%%%%%%%%
\
\par
\bigskip
An $\bR$-linear manifold is a submanifold of a stratum of differentials defined locally by homogenous real linear equations in period coordinates. They hold great importance stemming from a diverse range of connections including billiards in polygons, Jacobians with real multiplication, and dynamical rigidity. 
%motivated from both the directions of dynamics and algebraic geometry.
In this appendix, we present a simple example of $\bR$-linear manifold in a meromorphic stratum that is not algebraic, hence showing the algebraicity of these loci in holomorphic strata~\cite{Filip} does not extend to the meromorphic case.
\smallskip

%%%%%%%%%%%%%
%Define the strata of differentials
For $\mu=(m_1,\dots,m_n)$ an integer partition of $2g-2$, the stratum of differentials of type $\mu$ is the moduli space of \emph{flat surfaces} or pairs of pointed smooth curves and meromorphic differential of type $\mu$, set theoretically,
$$\Omega\mathcal{M}_{g,n}(\mu):=\{(X,\omega,p_1,\dots,p_n)\hspace{0.1cm}|\hspace{0.1cm}(X,p_1,\dots,p_n)\in\mathcal{M}_{g,n}, \hspace{0.1cm} (\omega)_0-(\omega)_\infty=\sum m_ip_i\}.$$
%\smallskip
%%%%%%%%%%%%%%%%%
%Define period coordinates
While the strata inherit an algebraic structure as a stratification of the Hodge bundle, integrating the differential $\omega$ yields a presentation of~$X$, punctured at the poles of $\omega$, as polygons in the complex plane with parallel side identifications. Hence we obtain a complex analytic orbifold structure locally at a point from a choice of basis for the relative homology $H_1(X\backslash P,Z,\bZ)$, where $Z$ and $P$ are the zeros and poles of $\omega$ respectively. This basis extends locally via the flat connection and local orbifold coordinates known as \emph{period coordinates} are obtained by integrating the differential by this basis. 
%%%%%%%%%%%%%%%%
%Example H(2,-2) with picture
\begin{figure}[htbp]
\begin{center}
\begin{overpic}[width=1\textwidth]{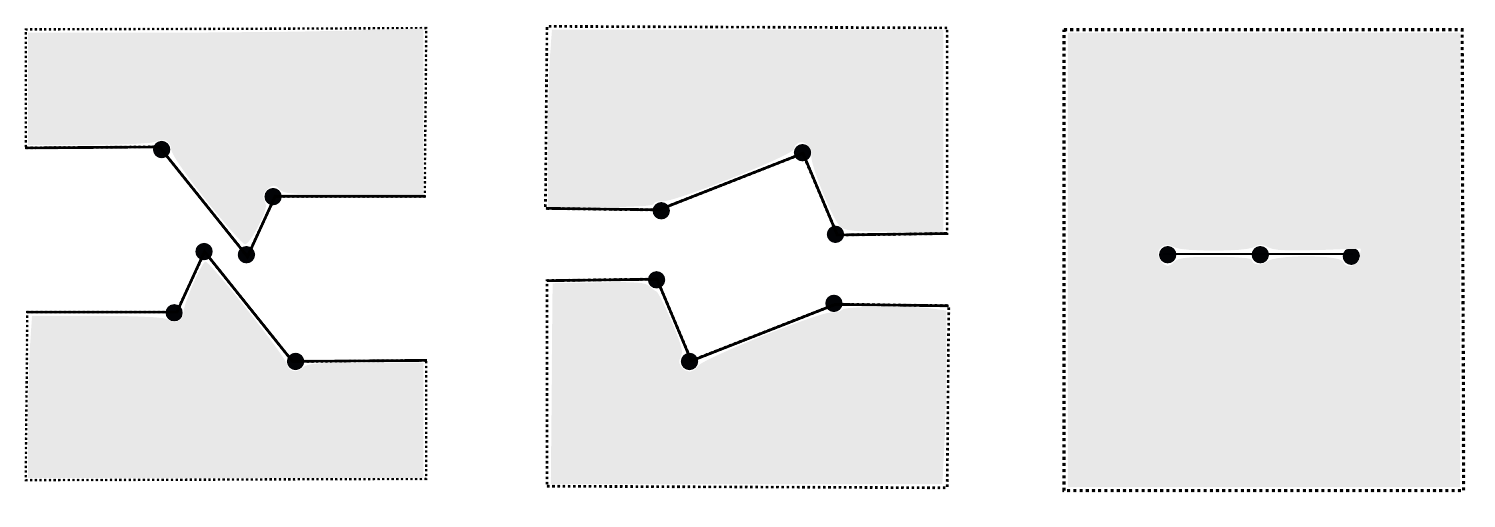}
%Surface 1
\put(10.8,21.7){$a$}
\put(10.2,16.3){$b$}
\put(17.5,15){$a$}
\put(18.5,19){$b$}

%Surface 2
\put(47.2,23.7){$a$}
\put(43.2,12.5){$b$}
\put(51.5,11.5){$a$}
\put(55.5,22.2){$b$}

%Surface 3
\put(80.5,19.5){$a$}
\put(86.5,19.5){$b$}
\put(80.5,16){$b$}
\put(86.5,16){$a$}
\end{overpic}
 \caption{Three flat surfaces in $\Omega\mathcal{M}_{1,2}(2,-2)$}
  \label{Fig1}
%\vspace{0.6cm}
\end{center}
\end{figure}

%Explain the flat picture.
For example, Figure~\ref{Fig1} contains polygon presentations for three different flat surfaces in the stratum $\Omega\mathcal{M}_{1,2}(2,-2)$. In this case, both $Z$ and $P$ are one-point sets and a flat surface in the stratum can be expressed by two broken half-planes with parallel side identifications as follows. The pairs of line segments $a$ and $b$ are identified as labeled and the two infinite half rays extending to the left and the two infinite half rays extending to the right are identified respectively. This gives one vertex with cone angle $6\pi$, the unique double zero of the differential,  and the surface is punctured at infinity, the unique double pole of the differential. 
Varying $a$ and $b$ in $\bC$ with the condition $\Re(a),\Re(b)>0$, provides period coordinates for a local chart in the stratum $\Omega\mathcal{M}_{1,2}(2,-2)$. The flat surface on the right  in Figure~\ref{Fig1} is obtained by setting $a=b=1$ in this chart and identifying the infinite half rays. The polygon presentation for this flat surface is then the infinite plane with two \emph{slits} with opposite sides identified as labeled. 
%\smallskip

% A linear manifold
%A closed analytic subvariety of the strata that is locally given by linear homogenous equations in these coordinates with real coefficients is known as an $\bR$-linear manifold. 
Now consider the stratum $\Omega\mathcal{M}_{1,4}(2,-2,0,0)$ obtained by further allowing two phantom zeros and let $\pi$ be the forgetful map to $\Omega\mathcal{M}_{1,2}(2,-2)$ that forgets these two points.
We obtain period coordinates for the fibre of $\pi$ over the flat surface on the right in Figure~\ref{Fig1} as $(\bC\backslash\{0,1,2\})^2\backslash\Delta$ where $\Delta$ denotes the diagonal and we identify $a$ and $b$ with the open real intervals $(0,1)$ and $(1,2)$ respectively.  Setting $u-v=1$ we obtain a local $\bR$-linear condition and we are left to consider the closure. The only monodromy is obtained by passing the phantom zeros through the passage of slits, that is, the coordinates $(u,v)$ change as a point passes through the passage $(0,1)\cup(1,2)$. For example, Figure~\ref{Fig2} shows how the the loci $u-v=k$ and $u-v=k+1$ are connected for any $k\in \bZ\backslash\{0\}$ and $k\ne -1$. Further, the loci $u-v=-1$ and $u-v=1$ are connected by passing both phantom zeros through the slits simultaneously. Hence we obtain an irreducible $\bR$-linear manifold $\mathcal{T}$ in the stratum $\Omega\mathcal{M}_{1,2}(2,-2,0,0)$ as the $\bC^*$-orbit of the loci cut out by $u-v\in \bZ\backslash\{0\}$.
%\medskip
\begin{figure}[htbp]
\begin{center}
\begin{overpic}[width=1\textwidth]{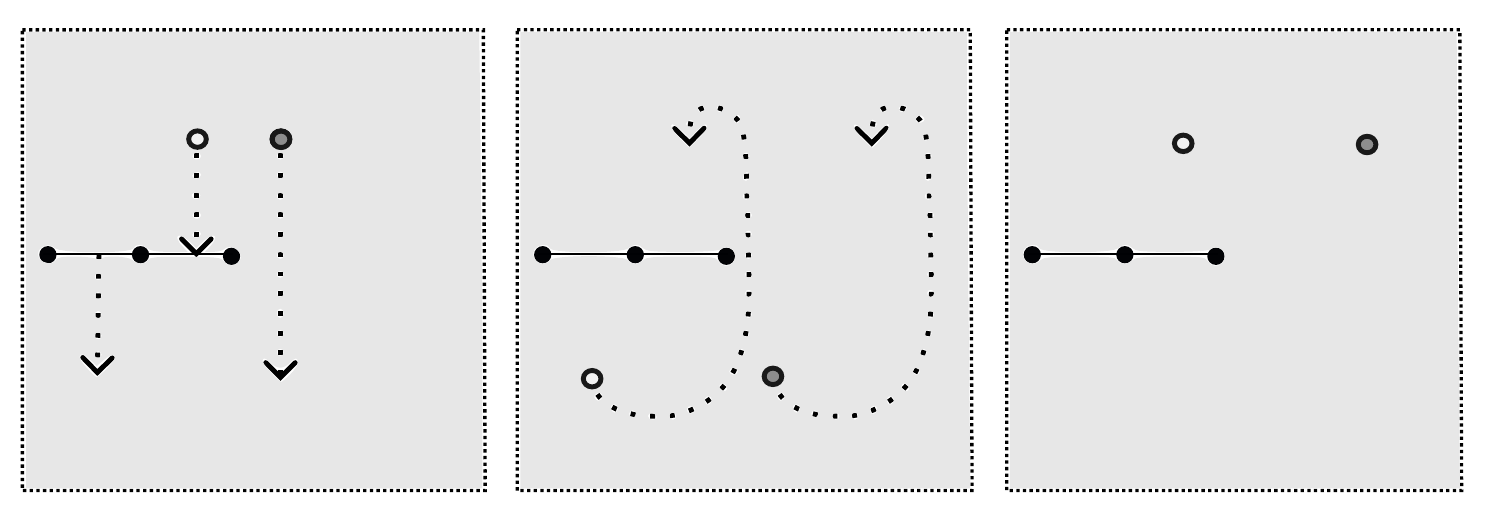}

\end{overpic}
 \caption{Inside the linear manifold $\mathcal{T}$}
  \label{Fig2}
%\vspace{0.6cm}
\end{center}
\end{figure}

%not algebraic
However, the forgetful map factors as $\pi=\pi_1\circ\pi_2$ through the stratum with just one phantom zero, that is, forgetting first $p_4$ and then $p_3$:
$$\Omega\mathcal{M}_{1,2}(2,-2,0,0)\xrightarrow{\pi_1} \Omega\mathcal{M}_{1,2}(2,-2,0)\xrightarrow{\pi_2} \Omega\mathcal{M}_{1,2}(2,-2)$$
Fixing $a=b=1$ and $u=c\in\bC\backslash\{0,1,2\}$ and allowing $v$ to vary in $\bC\backslash\{0,1,2,c\}$ we obtain a fibre of $\pi_2$ which is hence algebraic. However, this fibre intersects $\mathcal{T}$ in infinitely many points given by $u=c$, $a=b=1$, $v=c+k\ne 0,1,2$ for $k\in\bZ\backslash\{0\}$ contradicting the algebraicity of $\mathcal{T}$.
%%%%%%%%%%%%%%%%%%%%%

%%%%%%%%%%%%%%%%%%%%%%%%%%%%%%%%%%%%%%%%%%%%%%%%%%%%%%%%%%%%

%\section*{Index of notation}
%\indexprologue{\phantom{bla}\ }
%\printindex[graph]
%%We now summarise the most important notation.
%\indexprologue{\phantom{bla}\ }
%\indexprologue{We recall the main graphs and objects related to them.}
%\printindex[graph]

\printbibliography

\end{document}